\documentclass[12pt,a4paper]{article}

\usepackage[utf8]{inputenc}
\usepackage[english]{babel}
\usepackage{amsmath}
\usepackage{amsfonts}
\usepackage{amssymb}
\usepackage{graphicx}
\usepackage{enumerate}

\usepackage{amsthm}
\usepackage{mathrsfs}
\usepackage{proof}
\usepackage{color}

\usepackage{url}

\usepackage[left=3cm,right=2cm,top=3cm,bottom=2cm]{geometry}
\author{}
\title{Tableau systems for some {I}vlev-like (quantified) modal logics}

\author{M.E. Coniglio, L. Fari\~nas del Cerro and N.M. Peron}

\theoremstyle{theorem}
\newtheorem{teo}{Theorem}[section]
\newtheorem{lema}[teo]{Lemma}
\newtheorem{prop}[teo]{Proposition}
\newtheorem{coro}[teo]{Corollary}

\theoremstyle{definition}
\newtheorem{defi}[teo]{Definition}
\newtheorem{obs}[teo]{Remark}

\newcommand{\noi}{\noindent}


\newcommand{\F}{\mathcal{F}}

\newcommand{\tm}{\textbf{Tm}}
\newcommand{\sqm}{\textbf{S4m}}
\newcommand{\scm}{\textbf{S5m}}
\newcommand{\tms}{\textbf{Tm*}}
\newcommand{\sqms}{\textbf{S4m*}}
\newcommand{\scms}{\textbf{S5m*}}
\newcommand{\lms}{\textbf{Lm*}}

\newcommand{\laT}{\textsf{T}}
\newcommand{\lat}{\textsf{t}}
\newcommand{\laf}{\textsf{f}}
\newcommand{\laF}{\textsf{F}}
\newcommand{\laL}{\textsf{L}}
\newcommand{\lab}{{:}}
\newcommand{\lac}{\textsf{c}}

\newcommand{\axK}{\textsf{(K)}}
\newcommand{\axKu}{\textsf{(K1)}}
\newcommand{\axKd}{\textsf{(K2)}}
\newcommand{\axMu}{\textsf{(M1)}}
\newcommand{\axMd}{\textsf{(M2)}}
\newcommand{\axMt}{\textsf{(M3)}}
\newcommand{\axMc}{\textsf{(M4)}}
\newcommand{\axDNu}{\textsf{(DN1)}}
\newcommand{\axDNd}{\textsf{(DN2)}}
\newcommand{\axT}{\textsf{(T)}}
\newcommand{\axSq}{\textsf{(4)}}
\newcommand{\axSc}{\textsf{(5)}}
\newcommand{\axBF}{\textsf{(BF)}}
\newcommand{\axCBF}{\textsf{(CBF)}}
\newcommand{\axNBF}{\textsf{(NBF)}}
\newcommand{\axPBF}{\textsf{(PBF)}}

\newcommand{\axu}{\textsf{(Ax1)}}
\newcommand{\axd}{\textsf{(Ax2)}}
\newcommand{\axt}{\textsf{(Ax3)}}
\newcommand{\axq}{\textsf{(Ax4)}}
\newcommand{\axc}{\textsf{(Ax5)}}
\newcommand{\axs}{\textsf{(Ax6)}}

\newcommand{\MP}{\textsf{MP}}
\newcommand{\Gen}{\textsf{Gen}}

\DeclareSymbolFont{symbolsC}{U}{txsyc}{m}{n}
\DeclareMathSymbol{\strictif}{\mathrel}{symbolsC}{74}

\begin{document}

\maketitle

\begin{abstract}
Ivlev's pioneering work started in the 1970's showed a new and promissory way in the study of modal logic from the perspective of many-valued logics. Continuing our previous work on Ivlev-like non-normal modal logics with non-deterministic semantics, we present in this paper  tableau systems for {\bf Tm}, {\bf S4m} and {\bf S5m}, the non-normal versions of {\bf T}, {\bf S4} and {\bf S5}, respectively, as well as for their corresponding  first-order extensions \tms, \sqms\ and \scms.
\end{abstract}

\section*{Introduction}

Under a traditional perspective, we can distinguish logical operators into two major groups. In the first one, we have the most usual connectives and quantifiers, which are called \emph{extensional}. They seek to symbolically represent the meaning of certain expressions in natural language, such as: ``and'', ``or'', ``not'', ``implies'', as well as ``for all'' and ``exists". In the second group, we have the \emph{intensional} connectives, for example: ``it is necessary that'', ``it is obligatory that'', ``believes in'', and so on.

The most important difference between these logical operators, as noted by Frege, is that only in the case of extensional operators does the truth value of the complex sentence depend exclusively on the truth value of its parts. This, however, is not the case for intensional operators, which makes a formal semantic approach for them much more difficult.

In the specific case of modal logic, let us remember that, although the extensional semantics of classical logic was very well established in the mid-thirties of the twentieth century with the works of Tarski, only in the sixties the clear and intuitive semantics presented by Kripke  managed to formally represent the meaning of expressions such as ``it is necessary that'' and ``it is possible that''. Such semantics, which came to be called {\em relational semantics} or {\em possible worlds semantics}, caused a real revolution in the way we understand intensional operators. So much so that in \cite{bla:rij:ven:01}, the authors defend the slogan that modal logic is relational semantics.

Such a slogan, while warrantable because of the resounding success of relational semantics, ignores an alternative to Kripkean semantics that has been unknown for decades by much of the modal logic community, but that caught the attention of some of them in recent years. This interest is reflected in the growing number of publications on the so-called non-deterministic semantics for modal logics. Such semantics can be seen as an alternative to the Kripkean approach of capturing the meaning of intensional sentences.

This is because the requirement of the Fregean principle of extensionality --- namely, that the truth value of a complex sentence depends exclusively on the truth value of its constituent parts --- is here weakened in the following sense: instead of having a single truth-value for each complex sentence, we have non-empty sets of possible truth-values.\footnote{In technical terms, the connectives are interpreted as multiperators (or multifunctions) instead of operators (or functions).} The valuations must then choose some value among the possible ones.

As far as modal logic is concerned,  Ivlev in~\cite{ivl:73}, \cite{ivl:85}, \cite{ivl:88}, and~\cite{ivl:13} seems to have been one of the first to think of a set of truth values to capture the formal meaning of the intensional operators ``it is necessary that" and ``it is possible that''. In any case, this alternative approach to modal logic has been ignored by the community for decades, although it has had a timid but growing interest in these semantics in recent years.

In previous works, we sought to continue contributing to the development of non-deterministic semantics for modal logics. First, we presented a non-deterministic six-valued semantics for deontic operators in~\cite {con:cer:per:15} and~\cite{con:cer:per:17}. Then, we verified the viability of this type of semantics for modal systems even weaker than deontic ones, with 8 values in~\cite{con:cer:per:19}. Finally, we extended our approach to first-order modal logic with equality in~\cite{con:cer:per:21}. Closely related results were obtained independently in~\cite{omo:sku:16} and~\cite{OS:20}). New results in this subject were presented in~\cite{gratz:21b} and~\cite{paw:larosa:21} (see Section~\ref{finalsect}).

This article is organized as follows. In Section~\ref{secTm} we present the notion of non-deterministic semantics and some linguistic intuitions behind some four-valued Ivlev-like modal systems. In Section~\ref{secTm*} we present non-deterministic semantics concerning the quantified extension of these systems. In Section~\ref{secAxs} we present a list of axioms and inference rules which are used to define the Hilbert calculi for all these systems, and some of their metatheorems are stated. In Section~\ref{secTableaux} we present the analytical tableaux method for propositional and quantified versions of these systems. Finally, in the last section we compare our results with some decidability results from classical logic, many-valued logic, and Kripkean modal logics.

\section{Some four-valued Ivlev-like modal systems} \label{secTm}

Let us define first, what we will consider here as a propositional modal language. Let $\mathcal{P} = \{p_0, p_1, \ldots \}$ be an infinite denumerable set of propositional variables. The set $For_P$ of propositional modal formulas is generated as follows: (i) any element of $\mathcal{P}$ is an atomic formula; (ii) if $\varphi$ is a formula, then $(\neg \varphi)$ and $(\Box \varphi)$ are formulas; (iii) if $\varphi$ and $\psi$ are formulas, then $(\varphi \to \psi)$ is a formula; (iv) nothing else is a formula. We will omit parenthesis when the readability is unambiguous.

Ivlev's modal semantics is a generalization of the multi-valued matrices. After the seminal articles~\cite{avr:lev:01} and~\cite{avr:lev:05}, this semantic is called \emph{non-deterministic matrix semantics} (see also~\cite{avr:zam:11}).

\begin{defi} A \emph{non-deterministic matrix} (Nmatrix) for a propositional language $\mathcal{L}$ is a triple $\mathcal{M} = \langle \mathcal{V}, \mathcal{D}, \mathcal{O} \rangle$ such that:
	\begin{itemize}
		\item $\mathcal{V}$ is a non-empty set of truth values;
		\item $\mathcal{D}$ (designated truth values) is a non-empty proper subset of $\mathcal{V}$;
		\item For any n-ary connective $\#$, $\mathcal{O}$ includes a correspondent interpretation function $\tilde{\#} : \mathcal{V}^n \to \wp(\mathcal{V}) \setminus \{ \emptyset \} $
	\end{itemize}	
\end{defi}

\noi Valuations over a given non-deterministic matrix are defined in a very intuitive way.

\begin{defi}[See \cite{avr:zam:11}]  \label{valNmat} 
	Let  $\mathcal{M} = \langle \mathcal{V}, \mathcal{D}, \mathcal{O} \rangle$ be a Nmatrix over $For_P$. A {\em valuation} over $\mathcal{M}$ is a function $v: For_P \to \mathcal{V}$ such that, for every $n$-ary connective $\#$  and every $\varphi_1,\ldots,\varphi_n \in For_P$:
	$$v(\#(\varphi_1,\ldots,\varphi_n)) \in \tilde{\#}(v(\varphi_1),\ldots,v(\varphi_n))$$
\end{defi}

\noi A valuation over a Nmatrix $\mathcal{M}$ \emph{satisfies} a formula $\varphi$ iff $v(\#(\varphi_1,\ldots,\varphi_n)) \in \mathcal{D}$.
We also say  that $\varphi$ is \emph{valid} over a Nmatrix $\mathcal{M}$ iff  all the valuations satisfy $\varphi$. A Nmatrix $\mathcal{M}$ is a \emph{model} of a set $\Gamma$ of formulas iff there is a valuation that satisfy every element of $\Gamma$. Finally, $\varphi$ is a \emph{semantic consequence} of $\Gamma$ over a a Nmatrix $\mathcal{M}$ iff  every valuation that is a model of $\Gamma$ satisfies $\varphi$.

Normally, modal logic are extensions of Propositional Classical Logic {\bf CL}. Thus, it is expected that all formulas that are valid in {\bf CL} should continue being  valid over $\mathcal{M}$. Because of this, the propositional operators must respect the following clauses for any $a, b \in \mathcal{V}$:

\begin{enumerate}[(i)]
	\item	$a \in \mathcal{D}$ \ iff \ $\tilde{\neg}\,a \subseteq \mathcal{V}\setminus \mathcal{D}$		
	
	\item	$a \in \mathcal{D}$ and $b \notin \mathcal{D}$ \ iff \ $a\,\tilde{\to}\,b \subseteq \mathcal{V} \setminus\mathcal{D}$	
\end{enumerate}

\noi In order to analyze these restrictive clauses in a modal context, let us consider the four values proposed by Ivlev: 

\begin{itemize}
	\item[] \  $\laT$: necessarily true
	\item[] \  $\lat$: contingently true 
	\item[] \  $\laf$: contingently false
	\item[] \  $\laF$: necessarily false / impossible
\end{itemize}

\noi such that $\mathcal{D} = \{\laT, \lat\}$. It is clear that the restrictions (i) and (ii) above seem  very week. Take, for instance, just the operator for negation $\tilde{\neg}$. Consider the sentence:

\vspace{-0.5cm}
\begin{align}
	\tag{1} \mbox{1 plus 1 is equal to 2}
\end{align}

\noi It seems to be natural to attribute to (1) the value `necessarily true', since we are prone to accept that mathematical truths are not contingent, but necessary. But consider now the negation of (1):

\vspace{-0.5cm}
\begin{align}
	\tag{2} \mbox{1 plus 1 is not equal to 2}
\end{align}

\noi If (1) is necessarily true, then (2) should be necessarily false, that is, impossible.  To guarantee this, we must force that the negation of `necessarily true' is `necessarily false'. The reciprocal should also intuitively apply: the negation of something impossible should be necessary. An analogous requirement seems reasonable in the case of contingent propositions so that the negation of `contingently true' should be `contingently false' and vice versa.  These considerations lead us to the following truth table for the operator $\tilde{\neg}$:

\begin{displaymath}
	\def\arraystretch{1.3}
	\begin{array}{|l|l|}
		\hline  	   			& \tilde{\neg}\\
		\hline \hline  	\laT 	& \laF \\
		\hline 		   	\lat 	& \laf \\
		\hline 			\laf 	& \lat \\
		\hline 			\laF	& \laT \\
		\hline	
	\end{array}
\end{displaymath}

\

\noi The argument for constraining the operator $\tilde{\to}$  is a bit more complex. Take, for instance, the sentence:

\vspace{-0.5cm}
\begin{align}
	\tag{3} \mbox{If 1 plus 1 is equal to 2, then 2 minus 1 is equal to 1}
\end{align}

\noi Suppose (3) is necessarily true. From (2) and (3), it seems reasonable to assume that we should infer that ``2 minus 1 equals 1'' must be necessarily true. 

There are situations, however, in which semantic intuition leaves us in the darkness, especially when dealing with complex sentences involving different levels of modal truths, such as the following:

\vspace{-0.5cm}
\begin{align}
	\tag{4} \mbox{If 1 plus 1 is equal to 2, then it's raining in Moscow at 0h01 on January 1, 2032.}
\end{align}

\noi It is very difficult to say whether, according to our linguistic intuitions, (4) should receive the value necessarily true or contingently true in, namely, 2022.

Anyway, we will explore here just one of Ivlev's possible interpretations for modal implication by four values. First, because the reader will be able to check that this Nmatrix is semantically intuitive, as we already argued in \cite{con:cer:per:19}. In addition, some Ivlev implication tables are too strong from the point of view of relational semantics, that is, it ends up making certain propositional formulas valid that are not valid even in the strongest normal modal system in relational semantics, which is {\bf S5}. Finally, the reader will be able to check that the non-deterministic implication below proposed by Ivlev  coincides with the one proposed independently by Kearns in the eighties in \cite{kear:81}. For these reasons, from now on we will adopt the following Nmatrix for the operator $\tilde{\to}$ :

\begin{displaymath}
	\def\arraystretch{1.3}
	\begin{array}{|l|l|l|l|l|}
		\hline \tilde{\to}  & \laT 	& \lat 			& \laf 			& \laF \\
		\hline \hline \laT 	& \laT 	& \lat 			& \laf			& \laF \\
		\hline \lat			& \laT 	& \{\laT,\lat\} & \laf 			& \laf \\
		\hline \laf			& \laT 	& \{\laT,\lat\} & \{\laT,\lat\} & \lat \\
		\hline \laF			& \laT	& \laT 			& \laT 			& \laT \\	
		\hline
	\end{array}
\end{displaymath}

\

\noindent Finally, the  multioperator assigned to $\Box$ must capture the notion of \emph{necessary} in natural language. Consider, for instance, the sentence:

\vspace{-0.5cm}
\begin{align}
	\tag{5} \mbox{Socrates is mortal.}
\end{align}

\noi Suppose we consider, in some sense, sentence (5) to be necessarily true. Thus, we would infer that the sentence

\vspace{-0.5cm}
\begin{align}
	\tag{6} \mbox{Socrates is necessarilly mortal.}
\end{align}

\noi is true. But if (5) is necessarily true, should (6) be a sentence necessarily true or only contingently true?
Reciprocally, if (5) is contingently true, then (6) is false, but should (6) be only contingently false or impossible? 

These modal puzzles seem to be a consequence of the fact that iterated modalities are very rare in natural language. This seems to be one of the causes of the enormous quantity of propositional modal systems that exist in the literature.

Ivlev was aware of this fact, so he presented more than one table to interpret the operador $\Box$. Here we are going to work with the following tables:\footnote{$\Box_1$ and $\Box_3$ were proposed in~\cite{ivl:88}, while $\Box_2$ was proposed in~\cite{con:cer:per:15}. The intuition behind $\Box_2$ becomes clearer in the context of {\em swap structures}, see~\cite{con:gol:19}.}

\begin{displaymath}
	\def\arraystretch{1.3}
	\begin{array}{|l|l|l|l|}
		\hline  	   			  	 & \tilde{\Box}_1 	& \tilde{\Box}_2 & \tilde{\Box}_3 \\
		\hline \hline  			\laT & \{\laT,\lat\} 	& \laT 			 & \laT \\
		\hline 					\lat & \{\laf,\laF\}  	& \{\laf,\laF\}	 & \laF \\
		\hline 					\laf & \{\laf,\laF\}  	& \{\laf,\laF\}  & \laF \\
		\hline 					\laF & \{\laf,\laF\} 	& \{\laf,\laF\}  & \laF \\
		\hline	
	\end{array}
\end{displaymath}

\
\noi Taking these operators into account, we can define three distinct Ivlev-like modal logics with a corresponding four-valued Nmatrix semantics:

\begin{itemize}
	\item $\mathcal{M}({\bf Tm}) = \langle \{\laT,\lat,\laf,\laF \}, \{\laT,\lat\}, \{\tilde{\neg},\tilde{\to}, \tilde{\Box}_1 \} \rangle$
	\item $\mathcal{M}({\bf S4m}) = \langle \{\laT,\lat,\laf,\laF \}, \{\laT,\lat\}, \{\tilde{\neg},\tilde{\to}, \tilde{\Box}_2 \} \rangle$
	\item $\mathcal{M}({\bf S5m}) = \langle \{\laT,\lat,\laf,\laF \}, \{\laT,\lat\}, \{\tilde{\neg},\tilde{\to}, \tilde{\Box}_3 \} \rangle$
	
\end{itemize}

\noi We have analyzed these systems in~\cite{con:cer:per:15}, \cite{con:cer:per:17} and \linebreak \cite{con:cer:per:19} (see also~\cite{omo:sku:16} and~\cite{paw:larosa:21}).


\section{Extensions to quantified languages} \label{secTm*}

In~\cite{con:cer:per:21} the extension of the modal systems {\bf Tm}, {\bf S4m} and {\bf S5m} to first-order languages was analyzed. We briefly recall the main definitions and basic results obtained therein.

Let us begin by using our semantic intuitions to understand a sentence quantified in natural language, like the sentence below:

\vspace{-0.5cm}
\begin{align}
	\tag{7} \mbox{Everybody is mortal.}
\end{align}

\noi Sentence (7) will  be necessarily true when it is necessarily true for each individual in the domain. But sentence (7) will only be contingently true if: (i) there is at least one individual in the domain who is contingently mortal; and (ii) every individual in the domain is mortal, necessarily or only contingently. We say that (7) is contingently false if at least one individual in the domain is not mortal. Furthermore, any individual in the domain could be mortal, that is, it is not impossible for any individual to be mortal. Finally, (7) is impossible when it is not possible for at least one individual to be mortal.

Keeping these intuitions in mind, let us briefly recall the semantics of first-order structures for  {\bf Tm}$^\approx$ introduced in \cite{con:cer:per:21}. By simplicity, and given that in this paper we are mainly interested in tableau systems for some quantified Ivlev-like modal logics, the equality predicate $\approx$ will not be considered, and the signatures will not include symbols for functions. From now on, we will call  {\bf Tm}$^*$ the first-order extension of {\bf Tm} without the equality predicate or any symbol for functions.

Formally, a {\em (basic) predicate signature} is a collection $\Theta$ formed by the following symbols:  (i) a non-empty set of predicate symbols $\mathcal{P}$, with the corresponding arity $\varrho(P) \geq 1$ for each $P \in \mathcal{P}$; (ii) a possible empty set of  individual constants $\mathcal{C}$. It will also assumed a fixed infinite denumerable set $Var =\{x_1,x_2,\ldots\}$ of individual variables.\footnote{It should be noted that most part of modal logic manuals --- for instance \cite{hug:cre:96}, \cite{fit:med:98} and \cite{gar:06} --- only consider basic predicate signatures, that is, do not consider function symbols among the symbols of their language (an exception is \cite[p. 241]{car:piz:08}).}  

A term $\tau$ in a predicate language $\Theta$ is a variable or a constant. Given a predicate signature $\Theta$, the set $For(\Theta)$ of well-formed formulas (wffs) is also defined recursively as follows: (i) for each $n$-ary predicate $P$, if $\tau_1, \ldots, \tau_n$ are terms, then $P\tau_1\ldots \tau_n$ is a wff (called atomic); (ii) if $\varphi$ is a wff and $x$ is a variable, then $(\neg \varphi)$, $(\Box \varphi)$ and $(\forall x \varphi)$ are also wffs; (iii) if $\varphi$ and $\psi$ are wffs, then $(\varphi \to \psi)$ is also a wff; (iv) nothing else is a wff. As before,  parenthesis will be omitted when readability is unambiguous.

Recall from \cite{con:cer:per:21} that quantifiers are interpreted in \tms\ by means of the following (deterministic) multioperators $\tilde{Q}^d_4: (\mathcal{P}(\{\laT,\lat,\laf,\laF\}) \setminus \{\emptyset\}) \to (\mathcal{P}(\{\laT,\lat,\laf,\laF\}) \setminus \{\emptyset\})$, for  $Q\in \{\forall, \exists\}$:

\begin{displaymath}
	\begin{array}{|c|c|}
		\hline  X & \tilde{\forall}_4^d(X)\\
		\hline \{\laT\} & \laT  \\
		\hline \{\lat\} & \lat  \\
		\hline \{\laT, \ \lat\} & \lat \\
		\hline \{\laf, \ \lat\} & \laf \\
		\hline \{\laf, \ \lat, \ \laT\} & \laf \\
		\hline \{\laf\} & \laf \\
		\hline \{\laf, \ \laT\} & \laf \\
		\hline \laF \in X & \laF \\
		\hline
	\end{array}
	\hspace{1cm}
	\begin{array}{|c|c|}
		\hline  X & \tilde{\exists}_4^d(X)\\
		\hline \laT \in X & \laT  \\
		\hline \{\lat\} & \lat  \\
		\hline \{\lat, \ \laF\} & \lat \\
		\hline \{\lat, \laf\} & \lat \\
		\hline \{\lat, \ \laf, \laF\} & \lat \\
		\hline \{\laf\} & \laf \\
		\hline \{\laf, \ \laF\} & \laf \\
		\hline \{\laF\} & \laF \\
		\hline
	\end{array}
\end{displaymath}

\

\noi Such quantifiers are deterministic by definition, and correspond, respectively, to the deterministic conjunction and disjunction of the members of $X$ according to the order given by the chain $\laF \leq \laf \leq \lat \leq \laT$. As it was done in \cite{con:cer:per:21}, by simplicity only the universal quantifier will be considered in \tms, and $\exists x \varphi$ will be an abbreviation for $\neg\forall x \neg\varphi$.

\begin{defi} \label{structure}
	Let $\Theta$ be a predicate signature. A four-valued modal structure over $\Theta$ is a pair $\mathfrak{A}= \langle U, \cdot^\mathfrak{A} \rangle$, such that $U$ is a non-empty set (the {\em domain} of the structure) and $\cdot^\mathfrak{A}$ is an interpretation function for the symbols of $\Theta$, which is defined as follows:
	\begin{itemize}
		\item For each $n$-ary predicate $P$, $P^\mathfrak{A}:U^n \to \{\laT,\lat,\laf,\laF\}$ is a function;
		\item For each individual constant $c$, $c^\mathfrak{A}$ is an element of $U$.
	\end{itemize}
\end{defi}

\begin{defi} \label{diagram} Let $\mathfrak{A}= \langle U, \cdot^\mathfrak{A} \rangle$ be a four-valued modal structure over a signature $\Theta$ as in Definition~\ref{structure}, and let $C^\mathfrak{A} = \{c^\mathfrak{A} \ : \ c \in C\}$. Let $\bar U= \{ \bar a \ : \ a  \in U\setminus C^\mathfrak{A}\}$ be a set of new constant symbols (i.e., disjoint from $C$), and let $\Theta_U$ be the signature obtained from $\Theta$ by adding the set $\bar U$ of constants. Let $\mathfrak{A}_U= \langle U, \cdot^{\mathfrak{A}_U} \rangle$ be the expansion of $\mathfrak{A}$ to $\Theta_U$ by setting that $\bar{a}^{\mathfrak{A}_U}=a$ for every $\bar a \in \bar U$.  If $C^\mathfrak{A} = U$ then, by definition, $\Theta_U=\Theta$ and  $\mathfrak{A}_U= \mathfrak{A}$.
\end{defi}

\begin{obs} \label{diag-cons-rem} If  $\mathfrak{A}$  is a four-valued modal structure over $\Theta$ and $\mathfrak{A}_U$ is defined as above, both structures should validate the same closed formulas over $\Theta$. This will be guaranteed by using valuations over $Sen(\Theta_U)$, to be defined below. Observe that  $C \cup \bar U$ is the set of constants of $\Theta_U$ and, for every $a \in U$, there is a constant $c$ in $\Theta_U$ such that $c^{\mathfrak{A}_U}=a$. That is, $(C \cup \bar{U})^{\mathfrak{A}_U}=U$.
\end{obs}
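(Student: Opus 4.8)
The statement packages two assertions: the purely set-theoretic identity $(C \cup \bar U)^{\mathfrak{A}_U} = U$, and the claim that $\mathfrak{A}$ and $\mathfrak{A}_U$ validate the same closed formulas of $For(\Theta)$. The plan is to prove the first directly from Definition~\ref{diagram}, and then to reduce the second to the definition of validity over a structure that is announced for ``below'', by observing that this definition assigns to $\mathfrak{A}$ and to $\mathfrak{A}_U$ literally the same family of valuations over $Sen(\Theta_U)$. So the ``proof'' is essentially a matter of unwinding the construction of Definition~\ref{diagram}.

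First I would settle the identity. Suppose $C^{\mathfrak{A}} \neq U$. Then $\bar U = \{\bar a : a \in U \setminus C^{\mathfrak{A}}\}$ is a set of fresh constants disjoint from $C$, and $\Theta_U$ is $\Theta$ together with these new constants, so $C \cup \bar U$ is exactly the set of constants of $\Theta_U$. Since $\mathfrak{A}_U$ is the expansion of $\mathfrak{A}$ to $\Theta_U$, it interprets every symbol of $\Theta$ as $\mathfrak{A}$ does, whence $C^{\mathfrak{A}_U} = C^{\mathfrak{A}}$; and the stipulation $\bar a^{\mathfrak{A}_U} = a$ gives $\bar U^{\mathfrak{A}_U} = \{a : a \in U \setminus C^{\mathfrak{A}}\} = U \setminus C^{\mathfrak{A}}$. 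Taking the union, $(C \cup \bar U)^{\mathfrak{A}_U} = C^{\mathfrak{A}} \cup (U \setminus C^{\mathfrak{A}}) = U$. In the degenerate case $C^{\mathfrak{A}} = U$ one has $\bar U = \emptyset$ and, by Definition~\ref{diagram}, $\Theta_U = \Theta$ and $\mathfrak{A}_U = \mathfrak{A}$, so $(C \cup \bar U)^{\mathfrak{A}_U} = C^{\mathfrak{A}} = U$ trivially. In either case every element of $U$ is the denotation of some constant of $\Theta_U$.

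Next I would derive the agreement on closed formulas. Below, validity over a four-valued modal structure $\mathfrak{B}$ over a signature $\Sigma$ will be defined (roughly) by passing to $\mathfrak{B}_U$ over $\Sigma_U$ and declaring a sentence of $Sen(\Sigma_U)$ --- in particular a closed formula of $For(\Sigma)$ --- valid iff $v(\varphi) \in \{\laT,\lat\}$ for every valuation $v$ over $Sen(\Sigma_U)$ compatible with $\mathfrak{B}_U$, i.e.\ interpreting atomic sentences according to $\cdot^{\mathfrak{B}_U}$ and obeying the tables for $\tilde{\neg}$, $\tilde{\to}$, $\tilde{\Box}_1$ and the quantifier clause built from $\tilde{\forall}_4^d$. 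Instantiating this with $\mathfrak{B} = \mathfrak{A}_U$, the identity just proved shows that the constants of $\Theta_U$ already exhaust $U$, so applying the construction of Definition~\ref{diagram} to $\mathfrak{A}_U$ returns $(\Theta_U)_U = \Theta_U$ and $(\mathfrak{A}_U)_U = \mathfrak{A}_U$. Hence the set of valuations over $Sen(\Theta_U)$ entering the definition of validity for $\mathfrak{A}$ coincides, as a set of functions, with the one entering it for $\mathfrak{A}_U$; therefore the two structures validate exactly the same sentences of $Sen(\Theta_U)$, and a fortiori the same closed formulas of $For(\Theta)$.

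The only delicate point --- and the sole place where the argument leans on machinery not yet in the excerpt --- is that passing from $\Theta$ to $\Theta_U$ must not alter the value computed for $\forall x\varphi$ by producing instances $\varphi[x/c]$ realizing a truth value not already obtained by substituting an element of $U$. This is precisely the purpose of adjoining $\bar U$, and the identity $(C \cup \bar U)^{\mathfrak{A}_U} = U$ is exactly what makes the set $\{\, v(\varphi[x/c]) : c \text{ a constant of } \Theta_U \,\}$ fed into $\tilde{\forall}_4^d$ reflect the whole domain. Beyond this bookkeeping I expect no obstacle: the set-theoretic half is a one-line computation, and the formula half is immediate once the notion of valuation over $Sen(\Theta_U)$ has been fixed.
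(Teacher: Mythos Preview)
Your proposal is correct and aligns with what the paper does --- though note that in the paper this is a \emph{Remark} with no proof: the set-theoretic identity $(C \cup \bar U)^{\mathfrak{A}_U}=U$ is simply asserted from Definition~\ref{diagram}, and the claim about closed formulas is explicitly deferred (``This will be guaranteed by using valuations over $Sen(\Theta_U)$, to be defined below''). Your argument supplies exactly the verification the paper leaves implicit: the case split on $C^{\mathfrak A}=U$ versus $C^{\mathfrak A}\neq U$ for the identity, and the observation $(\mathfrak{A}_U)_U=\mathfrak{A}_U$ (which the paper itself later records in Remark~\ref{obs-valuations}(1)) to conclude that valuations over $\mathfrak{A}$ and over $\mathfrak{A}_U$ coincide as sets of functions on $Sen(\Theta_U)$.
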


\begin{defi} [da Costa] \label{variant} Let $\varphi$ and $\psi$ be formulas. If $\varphi$ can be obtained from $\psi$ by means of addition or deletion of void quantifiers,\footnote{That is, a quantifier $\forall x \varphi$ or $\exists x \varphi$ such that $x$ does not occur free in $\varphi$ (recalling that $\exists x \varphi$ stands for $\neg\forall x \neg \varphi$).} or by renaming bound variables (keeping the same free variables in the same places), we say that $\varphi$ and $\psi$ are {\em variant} of each other, and it will denoted by $\varphi \sim \psi$.
\end{defi}

\noindent
From now on, we will write $\varphi[x/\tau]$ to denote the formula obtained from $\varphi$ by replacing simultaneously every free occurrence of the variable $x$ by the term $\tau$, provided that $\tau$ is free for $x$ in $\varphi$.\footnote{Recall that a term $\tau$ is free for a variable $x$ in a formula $\varphi$ if the following holds: if a free occurrence of $x$ in $\varphi$ lies in the scope of a quantifier $\forall y$, then $y$ does not occur in $\tau$.} Note that, in the present framework, $\tau$ is either an individual variable or a constant symbol. If $\tau$ is a constant, then $\tau$ is always free for $x$ in any formula. If $\tau$ is a variable $z$ then $\tau$ is free for $x$ in $\varphi$ if the following holds: if a free occurrence of $x$ in $\varphi$ lies in the scope of a quantifier $\forall y$, then $y\neq z$.

\begin{defi} \label{Tm-sem}
	Let $\mathfrak{A}$ and $\mathfrak{A}_U$ be a four-valued modal structure as in Definition~\ref{diagram}. A \tms-{\em valuation} over $\mathfrak{A}$ is a function $v:Sen(\Theta_U) \to \{\laT,\lat,\laf,\laF\}$ defined recursively as follows:\footnote{The notion of valuations over a Nmatrix and a first-order structure considered here is slightly different of the one considered in~\cite{con:cer:per:21}. Specifically, we will not require the satisfaction of the substitution lemma, see Remark~\ref{lemsubs} below.}
	\begin{enumerate}
		\item For atomic formulas of the form $P c_1\ldots c_n$, $v(P c_1\ldots c_n) = P^\mathfrak{A}(c_1^{\mathfrak{A}_U},\ldots, c_n^{\mathfrak{A}_U})$;
		
		\item $v(\neg \varphi) \in \tilde{\neg} \, v(\varphi)$;
		
		\item $v(\Box \varphi) \in \tilde{\Box}_1 \, v(\varphi)$;
		
		\item $v(\varphi \to \psi) \in v(\varphi) \,\tilde{\to}\, v(\psi)$;
		
		\item For formulas of the form  $\forall x  \varphi$, consider the set $X(\varphi,x,v) = \big\{v(\varphi[x/c]) \ : \  c \in C \cup \bar U\big\}$. Then, $v(\forall x \varphi) \in \tilde{\forall}_4^d\big(X(\varphi,x,v)\big)$, where  $\tilde{\forall}_4^d$ is defined as above.
		
		\item If $\varphi \sim \varphi'$ then $v(\varphi) = v(\varphi')$.
	\end{enumerate}
\end{defi}

\noi With a slight change in the definition above, we can define valuations for the other modal systems studied here:

\begin{defi} \label{S4m-sem}
	Let $\mathfrak{A}$ and $\mathfrak{A}_U$ be  a four-valued modal structure as in Definition~\ref{diagram}. A \sqms-{\em valuation} over $\mathfrak{A}$ is a function $v:Sen(\Theta_U) \to \{\laT,\lat,\laf,\laF\}$ defined recursively exactly as in Definition \ref{Tm-sem}, with a single change in clause 2:  
	
	\begin{enumerate}
		\setcounter{enumi}{1}
		\item $v(\Box \varphi) \in \tilde{\Box}_2 \, v(\varphi)$;		
	\end{enumerate}
\end{defi}

\begin{defi} \label{S5m-sem}
	Let $\mathfrak{A}$ and $\mathfrak{A}_U$ be  a four-valued modal structure as in Definition~\ref{diagram}. A  \scms-{\em valuation} over $\mathfrak{A}$ is a function $v:Sen(\Theta_U) \to \{\laT,\lat,\laf,\laF\}$ defined recursively exactly as in Definition \ref{Tm-sem}, with a single change in clause 2:  
	\begin{enumerate}
		\setcounter{enumi}{1}
		\item $v(\Box \varphi) \in \tilde{\Box}_3 \, v(\varphi)$;
		
	\end{enumerate}
\end{defi}

\noi From now on, we will use {\bf Lm} (\lms, resp.) to indistinctly denote \tm, \sqm\ or \scm\ (\tms, \sqms\ or \scms, resp.).

\begin{defi} \label{semconsFOL} Let $\Gamma \cup \{\varphi\} \subseteq For(\Theta)$ such that $Var(\Gamma \cup \{\varphi\}) \subseteq \{x_1,\ldots, x_n\}$.  Then, $\varphi$ is a semantic consequence of $\Gamma$ in a quantified modal logic \lms, denoted by $\Gamma \models_{\bf Lm^*} \varphi$, if, for every four-valued modal structure $\mathfrak{A}$ over  $\Theta$ and for a \lms-valuation $v$ over $\mathfrak{A}$, if $v(\gamma[x_1/c_1 \cdots x_n/c_n]) \in \{\laT,\lat\}$ for every $\gamma \in \Gamma$  and every $c_1,\ldots, c_n \in C \cup \bar U$ then  $v(\varphi[x_1/c_1 \cdots x_n/c_n]) \in \{\laT,\lat\}$ for every $c_1,\ldots, c_n \in C \cup \bar U$.
\end{defi}


\section{Hilbert calculi} \label{secAxs}

In this section, we present the Hilbert calculi for the modal logics to be studied along this paper. It should be observed that {\bf Tm} and {\bf S5m} were introduced in~\cite{ivl:88} under the names of {\bf Sa}$^+$ and {\bf Sb}$^+$, respectively.\footnote{As observed in~\cite{omo:sku:16}, the inference rules considered by Ivlev concerning the replacement of $\varphi$ by $\neg\neg\varphi$ inside any formula are not sound, and they must be changed by the axioms 	\axDNu\ and \axDNd\ below.}

Let consider the following axiom schemas and inference rules:\\

{\bf Axiom schemas:}\\

$\begin{array}{ll}
	\axu & \varphi \to (\psi \to \varphi)\\[2mm]
	\axd & (\varphi \to (\psi \to \xi)) \to ((\varphi \to \psi) \to (\varphi \to \xi))\\[2mm]
	\axt & (\neg \psi \to \neg \varphi) \to ((\neg \psi \to \varphi) \to \psi)\\[2mm]
	\axq & \forall x \varphi \to \varphi[x/\tau] \  \ \mbox{ if $\tau$ is free for $x$ in $\varphi$}\\[2mm]
	\axc & \forall x (\varphi \to \psi) \to (\varphi \to \forall x\psi) \ \  \ \mbox{ if $\varphi$ contains no free  occurrences of $x$}\\[2mm]
	\axs & \varphi\to\psi \ \ \mbox{ if $\varphi \sim \psi$}\\[2mm]
	\axK & \Box (\varphi \to \psi) \to (\Box \varphi \to \Box \psi)\\[2mm]
	\axKu & \Box (\varphi \to \psi) \to (\Box \neg\psi \to \Box \neg \varphi))\\[2mm]
	\axKd & \neg \Box \neg(\varphi \to \psi) \to (\Box \varphi \to \neg \Box \neg \psi)\\[2mm]
	\axMu & \Box \neg \varphi \to \Box(\varphi \to \psi)\\[2mm]
	\axMd & \Box \psi \to \Box(\varphi \to \psi)\\[2mm]
	\axMt & \neg \Box \neg \psi \to \neg \Box \neg (\varphi \to \psi) \\[2mm]
	\axMc & \neg \Box \neg\neg \varphi \to \neg \Box \neg (\varphi \to \psi) \\[2mm]
	\axT & \Box \varphi \to \varphi\\[2mm]
	\axSq & \neg \Box \neg \Box \varphi \to\Box \varphi\\[2mm]
	\axSc & \Box \varphi \to \Box \Box \varphi
		\end{array}
		$
		
		$\begin{array}{ll}
	\axDNu & \Box \varphi \to \Box \neg \neg \varphi\\[2mm]
	\axDNd & \Box \neg \neg \varphi \to \Box \varphi\\[2mm]
	\axBF & \forall x \Box \varphi \to \Box \forall x \varphi\\[2mm]
	\axCBF & \Box \forall x \varphi \to \forall x \Box \varphi\\[2mm]
	\axNBF & \forall x \neg \Box \neg  \varphi \to \neg \Box \neg \forall x \varphi \\[2mm]
	\axPBF &  \neg \Box \neg \forall x \varphi \to \forall x \neg \Box \neg  \varphi\\[6mm]
	
	\mbox{{\bf Inference rules:}}\\[2mm]
	
	\MP: &  \psi \ \  \ \mbox{ follows from $\varphi$ and $\varphi \to \psi$}\\[2mm]
	\Gen: &  \forall x \varphi \ \  \ \mbox{ follows from $\varphi$}\\[4mm]
\end{array}
$

\

\noi Taking into account the above axioms and rules, we can consider the following systems:

\begin{itemize}
	\item ${\bf CL} = \{\axu,\axd, \axt, \MP \}$
	\item {\bf CL*}$ = {\bf CL} \cup \{\axq,\axc, \Gen \}$
	\item ${\bf Tm} = {\bf CL} \cup \{\axK, \axKu, \axKd
	\axMu,\axMd, \axMt, \axMc, \axT,  \axDNu, \axDNd \}$
	\item \tms$ = {\bf Tm} \cup \{\axq,\axc, \axBF, \axCBF, \axNBF, \axPBF, \Gen \}$
	\item ${\bf S4m} ={\bf Tm} \cup \{ \axSq\}$
	\item \sqms = \tms$ \cup \{\axSq \}$
	\item ${\bf S5m} ={\bf S4m} \cup \{ \axSc\}$	
	\item \scms = \sqms$ \cup \{\axSc \}$
\end{itemize}

\noi The notion of derivation in a logic {\bf L}  is defined as usual.  We will use the conventional notation $\Gamma \vdash_{\bf L} \varphi$ in order to express that there is a derivation in  {\bf L} of $\varphi$ from $\Gamma$.

Any logic {\bf Lm} satisfies the Deduction metatheorem (DMT):

\begin{teo}[Deduction Metatheorem (DMT) for  {\bf Lm}]\label{DM} \
	Suppose that there exists in {\bf Lm} a derivation of $\psi$ from $\Gamma \cup\{\varphi\}$. Then $\Gamma \vdash_{\bf Lm} \varphi \to \psi$.\footnote{A detailed version of this proof for {\bf CL}, which also holds for {\bf Lm}, can be found in~\cite[Proposition~1.9]{men:10}.}
\end{teo}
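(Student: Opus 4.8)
The plan is to prove the Deduction Metatheorem by induction on the length of a derivation of $\psi$ from $\Gamma \cup \{\varphi\}$, exactly as one does for classical propositional logic. The only features of {\bf Lm} that the argument uses are that {\bf CL} $\subseteq$ {\bf Lm} (so axioms \axu\ and \axd\ are available) and that the inference rules of {\bf Lm} are \MP\ and \Gen. The presence of the extra modal axioms \axK, \axMu, etc., and of the quantifier axioms \axq, \axc, \axBF, \ldots, does not interfere, since axioms are handled uniformly regardless of their syntactic shape.

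First I would set up the induction. Let $\psi_1, \ldots, \psi_n = \psi$ be a derivation of $\psi$ from $\Gamma \cup \{\varphi\}$ in {\bf Lm}; I claim that for each $i$ we have $\Gamma \vdash_{\bf Lm} \varphi \to \psi_i$, and I prove this by strong induction on $i$. The base/step cases split according to how $\psi_i$ enters the derivation:
\begin{enumerate}
\item[(a)] $\psi_i$ is an axiom of {\bf Lm} or a member of $\Gamma$: then $\Gamma \vdash_{\bf Lm} \psi_i$, and from $\psi_i$ together with the instance $\psi_i \to (\varphi \to \psi_i)$ of \axu\ we get $\Gamma \vdash_{\bf Lm} \varphi \to \psi_i$ by \MP.
\item[(b)] $\psi_i$ is $\varphi$ itself: then $\Gamma \vdash_{\bf Lm} \varphi \to \psi_i$ because $\varphi \to \varphi$ is a theorem of {\bf CL} (hence of {\bf Lm}).
\item[(c)] $\psi_i$ follows by \MP\ from $\psi_j$ and $\psi_k = \psi_j \to \psi_i$ with $j,k < i$: by the induction hypothesis $\Gamma \vdash_{\bf Lm} \varphi \to \psi_j$ and $\Gamma \vdash_{\bf Lm} \varphi \to (\psi_j \to \psi_i)$; applying the instance $(\varphi \to (\psi_j \to \psi_i)) \to ((\varphi \to \psi_j) \to (\varphi \to \psi_i))$ of \axd\ twice via \MP\ yields $\Gamma \vdash_{\bf Lm} \varphi \to \psi_i$.
\end{enumerate}
Taking $i = n$ gives the theorem.

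The one genuinely delicate point is the case where $\psi_i$ is obtained by the generalization rule \Gen, i.e. $\psi_i = \forall x \, \chi$ for some $\chi = \psi_j$ with $j < i$. In the classical first-order setting the DMT requires a proviso (the generalized variable must not occur free in $\varphi$, or \Gen\ must not be applied to variables free in $\varphi$) or else a ``closed premise'' convention. So the cleanest route, and the one I expect the paper to follow, is to restrict attention to derivations in which \Gen\ is applied only to formulas with no free variables, or to phrase the DMT for sentences $\varphi$; under such a convention the \Gen\ case is handled by the induction hypothesis $\Gamma \vdash_{\bf Lm} \varphi \to \chi$, then \Gen\ gives $\Gamma \vdash_{\bf Lm} \forall x(\varphi \to \chi)$, and axiom \axc\ (applicable since $x$ is not free in $\varphi$) together with \MP\ gives $\Gamma \vdash_{\bf Lm} \varphi \to \forall x \chi = \varphi \to \psi_i$. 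Alternatively one invokes the fact that in these systems every derivable formula has a universal-closure variant (via \axs\ and \Gen), reducing the general case to the sentential one. This is exactly the subtlety flagged by the footnote's reference to \cite[Proposition~1.9]{men:10}, which treats the matter carefully; since the statement explicitly cites that source as covering the {\bf Lm} case, I would simply note that the proof is the standard one, with the \Gen\ case requiring the usual care about free variables, and refer the reader there for the routine bookkeeping.
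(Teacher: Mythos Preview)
Your induction with cases (a)--(c) is exactly the standard argument the paper defers to via the footnote reference to Mendelson, so in that respect the proposal is correct and matches the paper's approach.

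However, you have misread the system: in this paper {\bf Lm} denotes one of the \emph{propositional} systems \tm, \sqm, \scm, each of which is ${\bf CL}$ plus additional axiom schemas, with \MP\ as the \emph{only} inference rule. The rule \Gen\ (and the axioms \axq, \axc, \axBF, \ldots) belong only to the quantified extensions \lms. Hence your entire discussion of the \Gen\ case, the free-variable proviso, and axiom \axc\ is irrelevant to Theorem~\ref{DM}; cases (a)--(c) already exhaust the possibilities and the theorem holds without any restriction on $\varphi$. The subtlety you describe is precisely what is handled in the \emph{next} theorem (DMT for \lms), which does carry the usual side condition on \Gen. So your proof is not wrong, but it conflates two distinct statements and spends most of its effort on a case that cannot arise here.
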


\noi  As it could be expected, given that no inference rule was added to {\bf CL*} to obtain \lms, each \lms\ satisfies the restricted version of the Deduction metatheorem (DMT), as usually presented in {\bf CL*}:

\begin{teo}[Deduction Metatheorem (DMT) for  {\lms}] \label{DMQ}
	Suppose that there exists in {\lms} a derivation of $\psi$ from $\Gamma \cup\{\varphi\}$, such that no application of the rule (Gen) has, as its quantified variable, a free variable of $\varphi$ (in particular, this holds when $\varphi$ is a sentence). Then $\Gamma \vdash_{\lms} \varphi \to \psi$.\footnote{A detailed version of this proof for {\bf CL*}, which also holds for \lms, can be found in~\cite[Proposition~2.5 and Corollaries~2.6 and~2.7]{men:10}.}
\end{teo}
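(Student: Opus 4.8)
The plan is to argue by induction on the length $n$ of a fixed derivation $\chi_1, \ldots, \chi_n = \psi$ of $\psi$ from $\Gamma \cup \{\varphi\}$ in \lms\ satisfying the stated restriction on (Gen), proving the stronger statement that $\Gamma \vdash_{\lms} \varphi \to \chi_i$ for every $i \leq n$. The point to keep in mind throughout is that \lms\ extends {\bf CL*} \emph{only} by adding new axiom schemas (the modal and Barcan-type schemas), with no new inference rule beyond \MP\ and \Gen\ --- in particular there is no necessitation rule. Hence the modal axioms are treated just like any other axiom, and the proof is, mutatis mutandis, exactly the one for {\bf CL*} (see the reference to~\cite{men:10} in the footnote).

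For the base cases and the non-inferential steps: if $\chi_i$ is an instance of one of the axiom schemas of \lms, or $\chi_i \in \Gamma$, then $\Gamma \vdash_{\lms} \chi_i$; combining this with the instance $\chi_i \to (\varphi \to \chi_i)$ of \axu\ and one application of \MP\ yields $\Gamma \vdash_{\lms} \varphi \to \chi_i$. If $\chi_i = \varphi$, then it suffices to observe that $\varphi \to \varphi$ is a theorem of the {\bf CL} fragment (the usual three-line derivation from \axu\ and \axd), hence a theorem of \lms, so $\Gamma \vdash_{\lms} \varphi \to \chi_i$.

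For the inductive step there are two cases. If $\chi_i$ follows by \MP\ from $\chi_j$ and $\chi_k = (\chi_j \to \chi_i)$ with $j, k < i$, then by the induction hypothesis $\Gamma \vdash_{\lms} \varphi \to \chi_j$ and $\Gamma \vdash_{\lms} \varphi \to (\chi_j \to \chi_i)$; the instance $(\varphi \to (\chi_j \to \chi_i)) \to ((\varphi \to \chi_j) \to (\varphi \to \chi_i))$ of \axd\ together with two applications of \MP\ gives $\Gamma \vdash_{\lms} \varphi \to \chi_i$. If $\chi_i = \forall x \chi_j$ follows by \Gen\ from $\chi_j$ with $j < i$, then by the restriction on the derivation $x$ is not a free variable of $\varphi$; by the induction hypothesis $\Gamma \vdash_{\lms} \varphi \to \chi_j$, so applying \Gen\ gives $\Gamma \vdash_{\lms} \forall x (\varphi \to \chi_j)$, and since $x$ does not occur free in $\varphi$ the instance $\forall x (\varphi \to \chi_j) \to (\varphi \to \forall x \chi_j)$ of \axc\ applies; one further \MP\ yields $\Gamma \vdash_{\lms} \varphi \to \forall x \chi_j = \varphi \to \chi_i$. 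Taking $i = n$ concludes the argument.

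The only delicate point --- and precisely the reason the statement must carry a side condition here, unlike the unrestricted Theorem~\ref{DM} for propositional {\bf Lm} --- is this last case: the hypothesis that no application of (Gen) quantifies over a free variable of $\varphi$ is exactly what licenses the use of \axc; when $\varphi$ is a sentence the condition is vacuous, which gives the parenthetical remark. One should also note, as a minor bookkeeping matter, that the initial segment $\chi_1, \ldots, \chi_j$ is itself a derivation of $\chi_j$ from $\Gamma \cup \{\varphi\}$ inheriting the same restriction, so the induction hypothesis legitimately applies to it. I do not expect any genuine obstacle beyond this; the modal content of \lms\ is inert for the DMT precisely because necessitation is not among its rules.
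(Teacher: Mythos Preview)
Your proposal is correct and is precisely the standard induction-on-derivation-length argument that the paper defers to via the footnote citing Mendelson; the paper gives no proof of its own, only the remark that the {\bf CL*} proof in~\cite{men:10} carries over because \lms\ adds only axiom schemas and no new inference rules, which is exactly the observation you make explicit. There is nothing to add.
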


\begin{teo}[Soundness and Completeness for {\bf Lm}]
	Let $\Gamma \cup\{\alpha\} \subseteq For_P$ be a set of  formulas. Then: $\varphi$ is a 
	semantic consequence of $\Gamma$ over the  Nmatrix $\mathcal{M}({\bf Lm})$ iff $\Gamma \vdash_{\bf Lm} \varphi$.
\end{teo}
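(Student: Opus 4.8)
The plan is to run the standard soundness-plus-Lindenbaum argument, adjusted to the non-deterministic matrix setting. For the soundness direction, I would prove by induction on the length of a derivation of $\varphi$ from $\Gamma$ in $\textbf{Lm}$ that every valuation $v$ over $\M(\textbf{Lm})$ satisfying all of $\Gamma$ also satisfies $\varphi$. The inductive step for $\MP$ is immediate from the constraint imposed on $\tilde{\to}$: if $v(\alpha)\in\mathcal{D}$ and $v(\beta)\notin\mathcal{D}$ then $v(\alpha)\,\tilde{\to}\,v(\beta)\subseteq\mathcal{V}\setminus\mathcal{D}$, so $v(\alpha\to\beta)\notin\mathcal{D}$; contrapositively, if $v(\alpha)$ and $v(\alpha\to\beta)$ are designated, so is $v(\beta)$. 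It then remains to check that every axiom schema is valid over $\M(\textbf{Lm})$, which is a finite inspection of the four-valued tables: \axu, \axd, \axt\ hold by the constraints on $\tilde{\neg}$ and $\tilde{\to}$ (which were precisely chosen to make the classical tautologies valid), while the modal schemas \axK, \axKu, \axKd, \axMu, \axMd, \axMt, \axMc, \axT, \axDNu, \axDNd\ --- together with \axSq\ for $\sqm$ and \axSc\ for $\scm$ --- are verified directly against the table for the appropriate $\tilde{\Box}_i$. This yields that $\Gamma\vdash_{\textbf{Lm}}\varphi$ implies that $\varphi$ is a semantic consequence of $\Gamma$ over $\M(\textbf{Lm})$.

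For completeness I would prove the contrapositive: assume $\Gamma\not\vdash_{\textbf{Lm}}\varphi$. By a Lindenbaum-style construction, using the Deduction Metatheorem (Theorem~\ref{DM}) to handle the limit step, extend $\Gamma$ to a set $\Delta$ maximal among deductively closed sets that do not prove $\varphi$; then $\varphi\notin\Delta$ and, via the $\textbf{CL}$ fragment, for each formula $\psi$ exactly one of $\psi,\neg\psi$ belongs to $\Delta$. Define a canonical valuation $v:For_P\to\{\laT,\lat,\laf,\laF\}$ by setting $v(\psi)=\laT$ when $\Box\psi\in\Delta$; $v(\psi)=\lat$ when $\psi\in\Delta$ but $\Box\psi\notin\Delta$; $v(\psi)=\laf$ when $\neg\psi\in\Delta$ but $\Box\neg\psi\notin\Delta$; and $v(\psi)=\laF$ when $\Box\neg\psi\in\Delta$. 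This is total by maximality, and well defined because $\Box\psi$ and $\Box\neg\psi$ cannot both lie in $\Delta$: by \axT\ that would put both $\psi$ and $\neg\psi$ in $\Delta$, so $\Delta$ would prove $\varphi$. By construction $v(\psi)\in\mathcal{D}=\{\laT,\lat\}$ iff $\psi\in\Delta$; hence, once $v$ is known to be a legitimate valuation over $\M(\textbf{Lm})$ (in the sense of Definition~\ref{valNmat}), it satisfies every element of $\Gamma\subseteq\Delta$ while failing $\varphi$, so $\varphi$ is not a semantic consequence of $\Gamma$ over $\M(\textbf{Lm})$.

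The heart of the argument --- and the step I expect to be the main obstacle --- is the Truth Lemma, i.e. checking that this $v$ really is a valuation over $\M(\textbf{Lm})$: $v(\neg\psi)\in\tilde{\neg}\,v(\psi)$, $v(\psi\to\chi)\in v(\psi)\,\tilde{\to}\,v(\chi)$ and $v(\Box\psi)\in\tilde{\Box}_i\,v(\psi)$ for all $\psi,\chi$. The negation clause is a four-case check using $\textbf{CL}$, maximality, and the schemas \axDNu, \axDNd\ (which govern how $\Box$ interacts with double negation). The implication clause is the laborious one: a $4\times4$ case split in which the entries of the $\tilde{\to}$-table equal to $\{\laT,\lat\}$ are dispatched purely classically, while the deterministic entries are exactly what the schemas \axK, \axKu, \axKd, \axMu, \axMd, \axMt, \axMc\ were tailored to secure --- for instance, $v(\psi)=\laF$ forces $v(\psi\to\chi)=\laT$ via $\Box\neg\psi\in\Delta$ and \axMu, the case $v(\psi)=v(\chi)=\laT$ uses \axMd, and the cases forcing a non-designated or a contingent value need \axK, \axKu, \axKd, \axMt\ or \axMc\ to rule out the unwanted membership in $\Delta$. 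The $\Box$ clause is where the three systems part company: for $\tm$ the table $\tilde{\Box}_1$ only records whether $\Box\psi$ is designated, which matches the definition of $v$ with no extra work; for $\sqm$ and $\scm$ one must additionally pin down $v(\Box\psi)$ precisely, and here \axSq\ (resp. \axSc) is invoked. Verifying that these specific Ivlev-style axioms suffice to close every case is the real content; modulo it, soundness and completeness follow at once. (Alternatively, one may simply appeal to the soundness and completeness results already established for these four-valued Nmatrices in our earlier work.)
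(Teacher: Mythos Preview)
Your proposal is correct and follows essentially the same route as the paper: for this propositional theorem the paper simply defers to~\cite{con:cer:per:15,con:cer:per:17}, but the method there---and the one the paper spells out in full for the quantified analogue (Theorem~\ref{sound-compl-lms})---is exactly your Lindenbaum/$\varphi$-saturated-set construction with the canonical valuation $v(\psi)\in\{\laT,\lat,\laf,\laF\}$ determined by membership of $\psi$, $\Box\psi$, $\neg\psi$, $\Box\neg\psi$ in $\Delta$, followed by a case-by-case verification that $v$ respects $\tilde{\neg}$, $\tilde{\to}$ and the appropriate $\tilde{\Box}_i$ using the listed axiom schemas.
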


\noi A detailed proof of this result can be found in~\cite{con:cer:per:15,con:cer:per:17}.


\begin{obs} \label{lemsubs} The \tms-valuations considered in~\cite{con:cer:per:21} require, in addition to the clauses in Definition~\ref{Tm-sem}, the satisfaction of the Leibniz rule for the equality predicate $\approx$, as well as the satisfaction of the substitution lemma, namely: $v(\varphi[x/\tau],s)=v(\varphi,s^x_a)$ where $s$  is any assignment for variables (that is, a function $s:Var \to U$, where $U$ is the domain of the given first-order structure),  $a$ is the value assigned to the term $\tau$ in the given first-order structure by using $s$, and $s^x_a$ is the assignment obtained from $s$ by assigning the value $a$ to $x$.
It is easy to translate the semantical framework of~\cite{con:cer:per:21}  to the present one: if $\varphi=\varphi(x_1,\ldots,x_n,x)$ is a formula having  (at most) the variables $x_1,\ldots,x_n,x$ occurring free, $s$ is an assignment for variables and $a \in U$ then $v(\varphi,s)$ and $v(\varphi,s^x_a)$ correspond in the present setting  to $v(\varphi[x_1/c_1\cdots x_n/c_n \ x/c])$ and $v(\varphi[x_1/c_1\cdots x_n/c_n \ x/\bar a])$, respectively (here, $c_1,\ldots,c_n,c \in C \cup \bar U$ and $\bar a \in \bar U$). 
Hence, the semantical framework in~\cite{con:cer:per:21} can be translated to the present one, but taking into consideration that, in the former,  the valuations satisfy the substitution lemma and the Leibniz rule. This produces subtle differences between both approaches:  if $c_1 \neq c_2$ in $C$ are such that $c_1^{\mathfrak{A}}=a=c_2^{\mathfrak{A}}$ then, according to Definition~\ref{Tm-sem}, the values $v(\varphi[x/c_1])$ and   $v(\varphi[x/c_2])$ are allowed to be different. On the other hand, in the framework considered in~\cite{con:cer:per:21} we have in this case, by the substitution lemma, that $v(\varphi[x/c_1],s)=v(\varphi,s^x_a)=v(\varphi[x/c_2],s)$, for every valuation $v$ and every assignment $s$.  Despite these small technical differences, both semantical consequence relations coincide, characterizing \tms\ (without the equality predicate $\approx$, as we shall see in Theorem~\ref{sound-compl-lms} below). It should be observed that the changes done in the present semantical framework w.r.t. the one considered in~\cite{con:cer:per:21}  simplify the definition of the tableau systems, as well as the corresponding proofs of soundness and completeness to be presented in the next sections of the paper.  
\end{obs}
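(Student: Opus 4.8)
Throughout I write $\models$ for the consequence relation of Definition~\ref{semconsFOL} and $\models^{\mathrm{old}}$ for the one of~\cite{con:cer:per:21} restricted to the $\approx$-free fragment (suppressing the subscript $\lms$; everything is uniform in the three systems). The plan is to prove $\models\ =\ \models^{\mathrm{old}}$, from which the remaining claim --- that both ``characterize \tms'' --- follows at once, since \cite{con:cer:per:21} gives $\Gamma\models^{\mathrm{old}}\varphi$ iff $\Gamma\vdash\varphi$ and Theorem~\ref{sound-compl-lms} below gives $\Gamma\models\varphi$ iff $\Gamma\vdash\varphi$. Rather than lean on that later theorem, I would prove $\models\ =\ \models^{\mathrm{old}}$ directly, by translating countermodels back and forth between the assignment-based semantics of~\cite{con:cer:per:21} and the named-model semantics of Definition~\ref{Tm-sem}.

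For $\models\ \subseteq\ \models^{\mathrm{old}}$ I would argue contrapositively. Given a structure $\mathfrak{A}$ (in the sense of Definition~\ref{structure}) and an ``old'' valuation $v^{\mathrm{old}}$ satisfying every $\gamma\in\Gamma$ under every assignment but with $v^{\mathrm{old}}(\varphi,s_0)\notin\{\laT,\lat\}$ for some $s_0$, I would define a valuation $v$ over $\mathfrak{A}$ in the sense of Definition~\ref{Tm-sem} by $v(\sigma)=v^{\mathrm{old}}(\psi,s)$, where $\psi\in For(\Theta)$ arises from $\sigma\in Sen(\Theta_U)$ by replacing the occurring diagram constants $\bar a_1,\ldots,\bar a_m$ with fresh distinct variables $y_1,\ldots,y_m$ and $s(y_i)=a_i$. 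The substitution lemma for $v^{\mathrm{old}}$ is exactly what makes $v$ well defined (independence of the chosen fresh variables, via invariance of $v^{\mathrm{old}}$ under renaming of free variables) and what makes $v$ meet the clauses of Definition~\ref{Tm-sem} --- in particular clause~1 and clause~5, where one identifies $X(\varphi,x,v)$ with $\{v^{\mathrm{old}}(\psi,s^x_a):a\in U\}$ using $(C\cup\bar U)^{\mathfrak{A}_U}=U$ (Remark~\ref{diag-cons-rem}). Translating the model property then shows $v$ is a model of $\Gamma$ in the sense of Definition~\ref{semconsFOL}, while $v(\varphi[x_1/c_1 \cdots x_n/c_n])=v^{\mathrm{old}}(\varphi,s_0)\notin\{\laT,\lat\}$ for constants $c_i$ naming $s_0(x_i)$; hence $\Gamma\not\models\varphi$.

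For the converse $\models^{\mathrm{old}}\ \subseteq\ \models$, again contrapositively, suppose a valuation $v$ over some $\mathfrak{A}$ is a model of $\Gamma$ in the sense of Definition~\ref{semconsFOL} but $v(\varphi[x_1/c^0_1 \cdots x_n/c^0_n])\notin\{\laT,\lat\}$ for some $c^0_1,\ldots,c^0_n\in C\cup\bar U$. Such a $v$ need not restrict to an ``old'' valuation over $\mathfrak{A}$, because Definition~\ref{Tm-sem} allows $v(\varphi[x/c_1])\neq v(\varphi[x/c_2])$ when $c_1^{\mathfrak{A}}=c_2^{\mathfrak{A}}$ --- the ``subtle difference'' pointed out above. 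The fix is to move to the canonical term structure $\mathfrak{B}$ with domain $U_{\mathfrak{B}}=C\cup\bar U$ (nonempty in every case), $c^{\mathfrak{B}}=c$ for $c\in C$, and $P^{\mathfrak{B}}(d_1,\ldots,d_n)=v(Pd_1\ldots d_n)$; up to renaming diagram constants one may take $\Theta_{U_{\mathfrak{B}}}=\Theta_U$ with canonical interpretation the identity on $C\cup\bar U$. Setting $w(\psi,s)=v\big(\psi[x_{i_1}/s(x_{i_1})\cdots]\big)$, where $x_{i_1},\ldots$ list the free variables of $\psi$ and their $s$-images are now constants of $\Theta_U$, I would verify by a routine induction that $w$ is an ``old'' valuation over $\mathfrak{B}$: it satisfies the defining clauses of~\cite{con:cer:per:21} (the $\forall$-clause because $\tilde{\forall}_4^d$ is deterministic and $\{v(\psi'[x/c]):c\in C\cup\bar U\}=\{w(\psi,s^x_a):a\in U_{\mathfrak{B}}\}$), and it satisfies the substitution lemma automatically, since every element of $\mathfrak{B}$ has a unique name and the verification collapses to commuting substitutions of distinct constants (so no Leibniz rule is needed either, $\approx$ being absent). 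Because $v$ models $\Gamma$ and every assignment into $C\cup\bar U$ is a tuple of constants, $w$ models $\Gamma$; and $w(\varphi,s_0)=v(\varphi[x_1/c^0_1 \cdots x_n/c^0_n])\notin\{\laT,\lat\}$ with $s_0(x_i)=c^0_i$; hence $\Gamma\not\models^{\mathrm{old}}\varphi$.

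I expect the main obstacle to be this second direction: absorbing the extra generality of Definition~\ref{Tm-sem}, in which syntactically distinct constants naming the same individual may carry distinct truth-values, so that a present valuation does not in general restrict to an ``old'' one over the same structure. The term-structure construction is precisely what neutralizes this, producing a structure where no two constants are identified and hence the substitution lemma, which fails for $v$ over $\mathfrak{A}$, holds for $w$ gratis. Everything else --- the substitution-commutation identities, the independence of fresh-variable choices, and the clause-by-clause inductions --- is routine bookkeeping with simultaneous substitutions, and since none of it touches the modal clause the argument is uniform over $\tilde{\Box}_1$, $\tilde{\Box}_2$, $\tilde{\Box}_3$, hence applies verbatim to \tms, \sqms\ and \scms.
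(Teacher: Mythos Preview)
Your proposal is sound, but it takes a genuinely different route from the paper. The paper gives no direct proof of this remark at all: the coincidence of $\models$ and $\models^{\mathrm{old}}$ is established only \emph{indirectly}, by proving soundness and completeness of the present semantics with respect to the Hilbert calculus (Theorem~\ref{sound-compl-lms}) and then invoking the analogous result for the old semantics already available in~\cite{con:cer:per:21}; both consequence relations are thus shown to equal $\vdash_{\lms}$, hence each other. You explicitly acknowledge this option and instead give a direct model-theoretic argument, translating countermodels back and forth. Your key device in the harder direction --- passing to the term structure $\mathfrak{B}$ with domain $C\cup\bar U$, so that every individual has a unique name and the substitution lemma holds for free --- is exactly the right move to neutralize the ``subtle difference'' the remark describes, and it is essentially the same Herbrand-style construction the paper deploys later as the canonical structure $\mathfrak{A}_\Delta$ in the completeness proof. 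What your approach buys is a purely semantic equivalence, independent of any axiomatization; what the paper's approach buys is economy, since Theorem~\ref{sound-compl-lms} is needed anyway and the coincidence then falls out with no extra work.
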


\begin{teo}[Soundness and Completeness for  \lms] \label{sound-compl-lms}
	Let $\Gamma \cup\{\alpha\} \subseteq For(\Theta)$ be a set of  formulas. Then: $\Gamma \models_{\lms}\varphi$ iff $\Gamma \vdash_{\lms} \varphi$.\footnote{Recall that a  proof of this result for the case of \tms\ with identity predicate $\approx$ can be found in~\cite{con:cer:per:21}. That proof can be easily adapted to \sqms\ and \scms. However, such results concern the semantical framework defined therein which, as observed in Remark~\ref{lemsubs}, differs slightly from the present setting.}
\end{teo}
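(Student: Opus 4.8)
The plan is to reduce the statement to the soundness--completeness theorem for the system with equality proved in~\cite{con:cer:per:21}, in a way that also settles the coincidence of consequence relations anticipated in Remark~\ref{lemsubs}. Write $\models^\circ_{\lms}$ for the consequence relation of~\cite{con:cer:per:21}, restricted to signatures without the equality predicate and without function symbols; recall that there the valuations additionally satisfy the substitution lemma. I would prove three implications: (a) $\Gamma\vdash_{\lms}\varphi$ implies $\Gamma\models_{\lms}\varphi$ (soundness in the present semantics); (b) every $\models^\circ_{\lms}$-valuation is, via the dictionary of Remark~\ref{lemsubs}, a \lms-valuation in the sense of Definitions~\ref{Tm-sem}--\ref{S5m-sem}, so that $\Gamma\models_{\lms}\varphi$ implies $\Gamma\models^\circ_{\lms}\varphi$; and (c) $\Gamma\models^\circ_{\lms}\varphi$ implies $\Gamma\vdash_{\lms}\varphi$, which is the completeness half of~\cite{con:cer:per:21} (stated there for \tms\ with $\approx$, but the same construction covers \sqms\ and \scms). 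Chaining $\vdash_{\lms}\,\Rightarrow\,\models_{\lms}\,\Rightarrow\,\models^\circ_{\lms}\,\Rightarrow\,\vdash_{\lms}$ proves the theorem; adding the soundness half of~\cite{con:cer:per:21} shows that all three relations coincide, which is the claim deferred from Remark~\ref{lemsubs}.

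For (a) I would induct on the length of derivations. Every closed instance over $\Theta_U$ of a propositional or propositional--modal schema (\axu, \axd, \axt, \axK, \axKu, \axKd, \axMu, \axMd, \axMt, \axMc, \axT, \axSq, \axSc, \axDNu, \axDNd) has, by clauses~2--4 of Definition~\ref{Tm-sem} (resp.~\ref{S4m-sem},~\ref{S5m-sem}), a value computed from the values of its atomic and quantified subformulas exactly as in the Nmatrix $\mathcal{M}({\bf Lm})$; hence it is designated by the soundness part of the Soundness and Completeness theorem for {\bf Lm}. For the quantifier schemas I would use that $\tilde{\forall}_4^d(X)$ is the $\leq$-minimum of $X$ for the chain $\laF\leq\laf\leq\lat\leq\laT$, that for this $\tilde{\to}$ one has $a\,\tilde{\to}\,b\subseteq\mathcal{D}$ unless $a\in\mathcal{D}$ and $b\notin\mathcal{D}$, and that in all three Nmatrices $v(\Box\psi)\in\mathcal{D}$ iff $v(\psi)=\laT$ (equivalently $v(\neg\Box\neg\psi)\in\mathcal{D}$ iff $v(\psi)\neq\laF$): \axq\ follows because after closing one gets $v(\forall x\psi)\leq v(\psi[x/\tau])$; \axc\ by a short case analysis on whether the fixed value of the $x$-free antecedent is designated; \axBF\ and \axCBF\ because $\tilde{\forall}_4^d(X)=\laT$ iff $X=\{\laT\}$; and \axNBF\ and \axPBF\ because $\laF\in X$ iff $\tilde{\forall}_4^d(X)=\laF$. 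Finally \MP\ is sound by the Nmatrix restriction~(ii) on $\tilde{\to}$, and \Gen\ is sound because $\tilde{\forall}_4^d(X)\in\mathcal{D}$ whenever $X\subseteq\mathcal{D}$, together with the fact that Definition~\ref{semconsFOL} already closes premises and conclusion under all substitutions of constants for free variables.

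For (b), read a $\models^\circ_{\lms}$-valuation $v^\circ$ on $\mathfrak{A}$ as a function on $Sen(\Theta_U)$ exactly through the correspondence spelled out in Remark~\ref{lemsubs}. Clauses~1--4 and~6 of Definition~\ref{Tm-sem} transfer immediately; clause~5 holds because the substitution lemma available on the $v^\circ$-side together with $(C\cup\bar U)^{\mathfrak{A}_U}=U$ (Remark~\ref{diag-cons-rem}) yields $\{v(\varphi[x/c]) : c\in C\cup\bar U\}=\{v^\circ(\varphi,s^x_a) : a\in U\}$, so the value $\tilde{\forall}_4^d$ forces on $\forall x\varphi$ is the same in both frameworks; the weaker, substitution-lemma-free requirement of the present setting is then automatically met. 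Hence any $\models^\circ_{\lms}$-counter-model to ``$\Gamma$ implies $\varphi$'' is a \lms-counter-model in the present sense, which is (b).

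I do not expect (a) or (b) to be genuinely hard, modulo the routine table-chasing and the bookkeeping of Remark~\ref{lemsubs}. The real content is (c), i.e.\ the completeness construction imported from~\cite{con:cer:per:21}: a Henkin--Lindenbaum argument that, from $\Gamma\not\vdash_{\lms}\varphi$, builds over a signature with denumerably many new constants a maximal, witness-complete set $\Delta$, equips the set of closed terms with a structure in which $P^{\mathfrak{A}}(\vec c)$ is the value the canonical valuation reads off $\Delta$ for $P\vec c$ (four cases according to which of $\psi$, $\neg\psi$, $\Box\psi$, $\Box\neg\psi$ lies in $\Delta$), and then checks every clause of the relevant Definition for the canonical valuation. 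The delicate step --- and precisely what \axBF, \axCBF, \axNBF, \axPBF\ are there for --- is clause~5: since $\tilde{\forall}_4^d(X)$ is sensitive to whether $\laF$, respectively $\laf$, belongs to $X$, one must recover, from membership in $\Delta$ of formulas such as $\Box\forall x\varphi$ or $\neg\Box\neg\forall x\varphi$, the corresponding information about $\{v(\varphi[x/c]) : c\in C\cup\bar U\}$, matching all eight lines of the $\tilde{\forall}_4^d$ table. If one preferred a self-contained proof rather than citing~\cite{con:cer:per:21}, it is exactly this clause-5 verification --- carried out directly for the weaker notion of valuation used here, which only makes the construction easier --- that I would single out as the main obstacle, everything else being the standard Lindenbaum machinery.
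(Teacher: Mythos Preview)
Your approach is correct but takes a different route from the paper. For soundness you and the paper agree: both verify the axioms and rules directly in the new semantics, and the paper singles out precisely \axq\ for a careful argument (essentially your observation that, after closing, $\psi[x/\tau]$ becomes $\psi[\vec{x'}/\vec{c'}][x/c_k]$ for some constant $c_k$, so its value lies in the set $X$ whose minimum is $v(\forall x\psi)$). For completeness, however, the paper does \emph{not} route through $\models^\circ_{\lms}$: it carries out the Henkin--Lindenbaum construction directly in the present framework, building a $\varphi$-saturated $C'$-Henkin theory $\Delta$ over $\Theta_{C'}$, the canonical structure with domain $C\cup C'$ and $c^{\mathfrak{A}_\Delta}=c$, and the canonical valuation $v_\Delta$ read off $\Delta$ by the four-case recipe you describe; the point the paper stresses is that verifying $v_\Delta$ satisfies Definition~\ref{Tm-sem} is \emph{simpler} than the analogous check in~\cite{con:cer:per:21}, precisely because the substitution lemma need not be established for $v_\Delta$. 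Your detour via (b) and (c) buys black-box reuse of~\cite{con:cer:per:21} and, as you note, simultaneously settles the coincidence of $\models_{\lms}$ and $\models^\circ_{\lms}$ promised in Remark~\ref{lemsubs}; the paper's direct route is self-contained and makes visible that weakening the notion of valuation only eases the canonical-model verification (which is why it forgoes your shortcut). One small caveat on your (c): \cite{con:cer:per:21} treats \tms\ with the equality predicate, so you still owe either a conservativity argument or the remark that the Henkin construction there goes through unchanged when $\approx$ is absent from the signature---routine, but it should be stated.
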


\begin{proof} 	\ \\
({\bf Soundness}) It is easy to see that the notion of valuation considered here is sufficient to guarantee the soundness of the axioms and inference rules of \lms, taking into account that no function symbols are allowed in the signatures. In special, it validates axiom \axq\ (in which the substitution lemma plays a fundamental role in~\cite{con:cer:per:21}). Thus, let $\psi = \forall x \varphi \to \varphi[x/\tau]$ be an instance of  axiom \axq\ over $\Theta$ (hence $\tau$ is a term free for $x$ in $\varphi$). Let  $\mathfrak{A}$ be a four-valued modal structure over $\Theta$ with domain $U$, and let $v$ be a  \lms-valuation over $\mathfrak{A}$. Let $\vec{x}=x_1\ldots x_n$ be a finite sequence of distinct variables such that $Var(\psi) \subseteq \{x_1,\ldots,x_n\}$ and let $\vec{c}=c_1\ldots c_n$ be a finite sequence of constants in $C \cup \bar{U}$. We want to prove that $v(\psi[\vec{x}/\vec{c}]) \in \{\laT,\lat\}$.  If $x$ does not occur free in $\varphi$ then the result  is clearly true. Indeed, in such case,  $v(\forall x\varphi[\vec{x}/\vec{c}]) = v(\varphi[\vec{x}/\vec{c}])=v(\varphi[x/\tau][\vec{x}/\vec{c}])$. Now, suppose that $x$ occurs free in $\varphi$. If $x=x_i$ for some $1 \leq i \leq n$ let $\vec{x'}=x_1\ldots x_{i-1}x_{i+1}\ldots x_n$ and $\vec{c'}=c_1\ldots c_{i-1}c_{i+1}\ldots c_n$. Otherwise, let $\vec{x'}=\vec{x}$ and  $\vec{c'}=\vec{c}$. With this notation, it is easy to see that $\forall x \varphi[\vec{x}/\vec{c}]= \forall x (\varphi[\vec{x'}/\vec{c'}])$. Hence, $v(\forall x \varphi[\vec{x}/\vec{c}])= v(\forall x (\varphi[\vec{x'}/\vec{c'}]))$. If $v(\forall x \varphi[\vec{x}/\vec{c}]) \in \{\laf,\laF\}$ then, by Definition of $\tilde{\to}$, $v(\psi[\vec{x}/\vec{c}]) \in \{\laT,\lat\}$. Suppose now that $v(\forall x \varphi[\vec{x}/\vec{c}])= v(\forall x (\varphi[\vec{x'}/\vec{c'}])) \in \{\laT,\lat\}$. We want to prove that $v(\varphi[x/\tau][\vec{x}/\vec{c}]) \in \{\laT,\lat\}$. Let $X=\{v(\varphi[\vec{x'}/\vec{c'}][x/c]) \ : \ c \in C \cup \bar{U}\}$.
By Definition~\ref{Tm-sem}(5), $v(\forall x \varphi[\vec{x}/\vec{c}]) \in \tilde{\forall}_4^d(X)$, whence $\tilde{\forall}_4^d(X) \subseteq  \{\laT,\lat\}$. Thus, by definition of  $\tilde{\forall}_4^d$, $X \subseteq  \{\laT,\lat\}$. That is,
$$(*) \hspace{1cm}v(\varphi[\vec{x'}/\vec{c'}][x/c]) \in \{\laT,\lat\} \ \mbox{ for every $c \in C \cup \bar{U}.$}$$
We have two cases to analyze: \\[1mm]
(1) $\tau$ is a variable $y$ free for $x$ in $\varphi$. Then, $y=x_j$ for some $1 \leq j \leq n$, given that  $Var(\varphi[x/\tau]) \subseteq \{x_1,\ldots,x_n\}$; or\\[1mm]
(2) $\tau$ is a constant $c_0 \in C$. In both cases 
$v(\varphi[x/\tau][\vec{x}/\vec{c}])= v(\varphi[\vec{x'}/\vec{c'}][x/c_k]) \in X$, 
where $k=j$ (in case (1)) or $k=0$ (in case (2)). By $(*)$,  $v(\varphi[x/\tau][\vec{x}/\vec{c}]) \in \{\laT,\lat\}$. This shows that $v(\psi[\vec{x}/\vec{c}]) \in \{\laT,\lat\}$ as required.

The validity of the other axioms can be proved by an easy adaptation (and simplification) of the proof of soundness of \tms\ given in~\cite[Subsection~2.4]{con:cer:per:21}. 	The reader can check the details.\\[2mm]	
({\bf Completeness)} The proof for \lms\ by using for the structures and valuations considered here can be easily adapted from the one obtained in~\cite{con:cer:per:21} as follows (in order to fix ideas, only  the case of \tms\ will be considered). Recall first the following notions and results: let  {\bf L} be a  Tarskian and finitary logic defined over a set of formulas $For$, and let $\varphi \in For$. A set of formulas $\Delta \subseteq For$  is {\em $\varphi$-saturated in {\bf L}} if $\Delta \nvdash_{\bf L} \varphi$ but $\Delta,\psi\vdash_{\bf L}\varphi$ for every $\psi \in For\setminus\Delta$. By a well-known result by Lindenbaum and \L o\'s (see~\cite[Theorem~22.2]{woj:84}), if $\Gamma \nvdash_{\bf L} \varphi$ then there exists a $\varphi$-saturated set $\Delta$  in {\bf L} such that $\Gamma \subseteq \Delta$, whenever {\bf L} is Tarskian and finitary. In particular,   we have:\\[1mm]
{\bf Fact 1:} Let  $\Gamma \cup \{\varphi\} \subseteq For(\Theta)$ such that $\Gamma \nvdash_{\tms} \varphi$. Then, there exists a set of formulas $\Delta$ such that $\Gamma \subseteq \Delta$ and $\Delta$ is $\varphi$-saturated in \tms.\\[1mm]
It is easy to prove that a $\varphi$-saturated set $\Delta$ in \tms\ is a closed theory (that is: $\psi \in \Delta$ iff $\Delta \vdash_{\tms} \psi$) and the following holds: $\psi \in \Delta$ iff $\neg\psi \notin \Delta$, and $\psi \to \gamma \in \Delta$ iff either $\psi \notin \Delta$ or $\gamma \in \Delta$.

Given a set of formulas $\Gamma \subseteq For(\Theta)$ and a set $C_0 \subseteq C$ of constants, $\Gamma$ is said to be a {\em $C_0$-Henkin theory in} \tms\ if, for every formula $\psi$ with at most a free variable $x$, there exists a constant $c \in C_0$ such that $\Gamma \vdash_{\tms} \psi[x/c] \to \forall x \psi$. Let $\Theta_{C'}$ be the signature obtained from $\Theta$ by adding a set $C'$  of new constants, and let $\vdash_{\tms}^{C'}$ be the corresponding consequence relation of \tms\ over $\Theta_{C'}$. By a standard argument it can proved the following:\\[1mm]
{\bf Fact 2:} Every  $\Gamma \subseteq For(\Theta)$ can be conservatively extended to a  $C'$-Henkin theory $\Gamma' \subseteq For(\Theta_{C'})$ in \tms. That is: $\Gamma \subseteq \Gamma'$, $\Gamma'$ is a  $C'$-Henkin theory in \tms\ over  $\Theta_{C'}$, and $\Gamma \vdash_{\tms} \varphi$ iff  $\Gamma' \vdash_{\tms}^{C'} \varphi$ for every $\varphi \in For(\Theta)$. Moreover, if $\Gamma' \subseteq \Gamma'' \subseteq  For(\Theta_{C'})$ then $\Gamma''$ is also a $C'$-Henkin theory in \tms.

Now, let   $\Gamma \cup \{\varphi\} \subseteq For(\Theta)$ such that $\Gamma \nvdash_{\tms} \varphi$. We will prove that $\Gamma \not\models_{\tms}\varphi$. In order to do this, let us observe first that,by {\bf Fact 2}, there exists a $C'$-Henkin theory $\Gamma'$ in \tms\ over  $\Theta_{C'}$ for a new set of constant symbols $C'$ such that $\Gamma'$ extends conservatively $\Gamma$. From this, $\Gamma' \nvdash_{\tms}^{C'} \varphi$ and so, by {\bf Fact 1}, there exists a $\varphi$-saturated theory $\Delta$ in \tms\ over $\Theta_{C'}$ extending $\Gamma'$. By the last part of {\bf Fact 2}, $\Delta$ is also a $C'$-Henkin theory over $\Theta_{C'}$ in \tms.

The {\em canonical} four-valued modal structure $\mathfrak{A}_\Delta= \langle C \cup C', \cdot^\mathfrak{A}_\Delta \rangle$ over $\Theta_{C'}$ is defined as follows: $c^{\mathfrak{A}_\Delta}=c$ for every constant symbol $c \in C \cup C'$ and, for every $n$-ary predicate symbol $P$, the function $P^{\mathfrak{A}_\Delta}$ is defined as follows:

$$P^{\mathfrak{A}_\Delta}(c_1,\ldots,c_n)=\left\{
                    \begin{array}{ll}
                      \laT, & \hbox{if $P(c_1,\ldots,c_n) \in \Delta$ and $\Box P(c_1,\ldots,c_n) \in \Delta$;} \\[2mm]
                      \lat, & \hbox{if $P(c_1,\ldots,c_n) \in \Delta$ and $\neg\Box P(c_1,\ldots,c_n) \in \Delta$;} \\[2mm]
                       \laf, & \hbox{if $\neg P(c_1,\ldots,c_n) \in \Delta$ and $\neg\Box \neg P(c_1,\ldots,c_n) \in \Delta$;} \\[2mm]
                        \laF, & \hbox{if $\neg P(c_1,\ldots,c_n) \in \Delta$ and $\Box \neg P(c_1,\ldots,c_n) \in \Delta$.}
                    \end{array}
                  \right.
$$
Observe that, for every $\psi$, either $\psi \in \Delta$ or $\neg\psi \in \Delta$ (but not both simultaneously). This shows that $P^{\mathfrak{A}_\Delta}$ is well-defined. Let $\mathfrak{A}$ be the reduct of $\mathfrak{A}_\Delta$ to $\Theta$. Then  $c^\mathfrak{A}=c$ for every constant symbol $c \in C$ and $P^{\mathfrak{A}}=P^{\mathfrak{A}_\Delta}$ for every predicate symbol $P$. Moreover, since  $U=C \cup C'$ is the domain of $\mathfrak{A}$ and $c^{\mathfrak{A}_\Delta}=c$ for every $c \in C \cup C'$  then $\bar{U}$, $\Theta_U$ and $\mathfrak{A}_U$ as in Definition~\ref{diagram} will be identified, respectively, with $C'$, $\Theta_{C'}$ and $\mathfrak{A}_\Delta$.
The {\em canonical} valuation over $\mathfrak{A}$ is the function $v_\Delta:Sen(\Theta_{C'}) \to \{\laT,\lat,\laf,\laF\}$ defined as follows:

$$v_\Delta(\psi)=\left\{
                    \begin{array}{ll}
                      \laT, & \hbox{if $\psi \in \Delta$ and $\Box \psi \in \Delta$;} \\[2mm]
                      \lat, & \hbox{if $\psi \in \Delta$ and $\neg\Box \psi \in \Delta$;} \\[2mm]
                       \laf, & \hbox{if $\neg \psi \in \Delta$ and $\neg\Box \neg \psi \in \Delta$;} \\[2mm]
                        \laF, & \hbox{if $\neg \psi \in \Delta$ and $\Box \neg \psi \in \Delta$.}
                    \end{array}
                  \right.
$$
Then, $v_\Delta$ is a valuation over $\mathfrak{A}$ (or, equivalently, over $\mathfrak{A}_\Delta$).
The proof of this fact is similar, but  simpler, than the one given in~\cite[Lemma~2.27]{con:cer:per:21}. By the very definition, $v_\Delta(\psi) \in \{\laT,\lat\}$ iff $\psi \in \Delta$. Let $\psi \in \Gamma$, and let $x_1,\ldots,x_n$ be a list of variables containing all the variables occurring free in $\psi$. Given $c_1,\ldots,c_n \in C \cup C'$ we infer that $\psi[\vec{x}/\vec{c}] \in \Delta$, by combining  (\Gen) and \axq\ and by the fact that $\Delta$ is a closed theory containing $\Gamma$. That is,  $v_\Delta(\psi[\vec{x}/\vec{c}]) \in \{\laT,\lat\}$ for every $\psi \in \Gamma$ and every $c_1,\ldots,c_n \in C \cup C'$. On the other hand, $\varphi \not \in \Delta$ and so $v_\Delta(\varphi) \in \{\laF,\laf\}$. If $\varphi$ is a closed formula then $v_\Delta(\varphi[\vec{x}/\vec{c}]) \in \{\laF,\laf\}$ for every $c_1,\ldots,c_n \in C \cup C'$. Otherwise, let  $x_1,\ldots,x_n$ be the list of all the variables occurring free in $\varphi$,  and let $\varphi_0= \forall x_1 \cdots \forall x_{n-1} \varphi$. By \axq, $\forall x_n\varphi_0 \neq \Delta$ and so there exists a constant $c_n \in C'$ such that $\varphi_0[x_n/c_n] \not\in \Delta$, since $\Delta$ is a $C'$-Henkin theory in \tms. Let $\varphi_1= \forall x_1 \cdots \forall x_{n-2} \varphi[x_n/c_n]$. By the same reasoning, there exists a constant $c_{n-1} \in C'$ such that $\varphi_1[x_{n-1}/c_{n-1}] \not\in \Delta$. Continuing with this reasoning inductively, we finally found constants $c_1,\ldots,c_n \in C'$ such that $\varphi[\vec{x}/\vec{c}] \not\in \Delta$. This means that $v_\Delta([\vec{x}/\vec{c}])  \in \{\laF,\laf\}$. By Definition~\ref{Tm-sem}, this implies that $\Gamma \not\models_{\tms}\varphi$.
\end{proof}


\section{Analytic Tableaux} \label{secTableaux}
	
In this section, tableau systems for the four-valued non-deterministic modal systems presented in the previous sections will be presented.
We will start by introducing in the first subsection a tableau system for the modal logics {\bf Tm} and \tms. 
In the second subsection, detailed proof of the completeness of the tableau system for \tms\ will be given. Finally, in Subsection~\ref{tableaux-etc} we will present the rules of the respective tableau systems for the logics \sqms\ and \scms, without showing the respective completeness of the method. Indeed, the proof of completeness is very similar to the case of \tms, so we decided to spare the reader the tedious work of accompanying repetitive demonstrations.

\subsection{Tableaux for {\tm} and \tms}

We will now describe an efficient proof procedure for \tm\ and \tms\ based on  \emph{analytic tableaux}. The present approach was adapted from~\cite{smul:1968} and its generalization to many-valued logics introduced in~\cite{car:87}.\footnote{Based on the ideas proposed in the present paper, in~\cite{con:tol:21}  were introduced $(n+2)$-valued tableau systems for da Costa's paraconsistent logics $C_n$. General approaches to tableau proof systems for finite non-deterministic matrices can be found in~\cite{Pawlowski} and~\cite{Gratz:21}.} Let $\phi$ be a formula and let $\laL$  indistinctly denote any truth value \laT, \lat, \laf\ or \laF;  thus $\laL\lab \varphi$ is a \emph{signed} formula.

Consider the following tableau rules for \tm:

$$
\begin{array}{llll}
	\displaystyle \frac{\laT\lab \neg \varphi}{\laF\lab \varphi} & \hspace{10mm} \displaystyle \frac{\lat\lab  \neg \varphi}{\laf\lab  \varphi}  & \hspace{10mm} \displaystyle \frac{\laf\lab  \neg \varphi}{\lat\lab  \varphi} & \hspace{10mm} \displaystyle \frac{\laF\lab  \neg \varphi}{\laT\lab  \varphi} \\[2mm]
	&&\\[2mm]
\end{array}
$$

$$
\begin{array}{llll}
	\displaystyle \frac{\laT\lab  \Box \varphi}{\laT\lab  \varphi} & \hspace{6mm} \displaystyle \frac{\lat\lab  \Box \varphi}{\laT\lab  \varphi}  & \hspace{6mm} \displaystyle \frac{\laf\lab \Box \varphi}{\lat\lab  \varphi \mid \laf\lab  \varphi \mid \laF\lab  \varphi} & \hspace{6mm} \displaystyle \frac{\laF\lab  \Box \varphi}{\lat\lab  \varphi \mid \laf\lab  \varphi \mid \laF\lab  \varphi} \\[2mm]
	&&\\[2mm]
\end{array}
$$

$$
\displaystyle \frac{\,\laT\lab  (\varphi \to \psi)\,}{\laF\lab  \varphi \mid \lat{:} \varphi, \ \lat{:} \psi \mid \laf\lab  \varphi, \ \lat\lab  \psi
	\mid \laf\lab  \varphi, \ \laf\lab  \psi \mid \laT\lab  \psi}$$
\

$$ \displaystyle \frac{\,\lat\lab  (\varphi \to \psi)\,}{\laT\lab  \varphi, \ \lat\lab  \psi \mid \lat\lab  \varphi, \ \lat\lab  \psi \mid \laf\lab  \varphi, \ \lat\lab  \psi
	\mid \laf\lab  \varphi, \ \laf\lab  \psi }
$$

\

$$
\begin{array}{ll}
	\displaystyle \frac{\laf\lab (\varphi \to \psi)}{\laT\lab \varphi, \ \laf\lab \psi \mid \lat\lab \varphi, \ \laf\lab \psi \mid \lat\lab \varphi, \ \laF\lab \psi} & \hspace{6mm} \displaystyle \frac{\,\laF\lab (\varphi \to \psi)\,}{\laT\lab \varphi, \ \laF\lab \psi} \\[2mm]
	&\\[2mm]
\end{array}
$$

\noindent It should be clear that the rules above are directly obtained from the definition of the multioperators in the Nmatrix $\mathcal{M}({\bf Tm})$. This methodology is analogous to the tableau rules obtained in~\cite{smul:1968} from the deterministic two-valued semantics for classical logic, and its extension to tableau systems generated by deterministic finite-valued semantics proposed in~\cite{car:87}.

A branch of a tableau for \tm\ generated by a finite set of signed formulas is said to be {\em closed} if it contains two signed formulas $\laL\lab \varphi$ and $\laL'\lab \varphi$ such that $\laL\neq\laL'$.

The signed tableau rules of \tms\ consist of those for \tm, plus rules for dealing with the quantifiers, to be described below. Along this section, $\Theta$ will denote any  predicate signature, while $\bar C=\{c_n \ : \ n \geq 1\}$ will denote an infinite denumerable set of constants disjoint with $C$. The signature obtained from $\Theta$ by adding the new set of constants $\bar C$ will be denoted by $\Theta(\bar C)$. From now on, we will consider signed formulas of the form $\laL\lab\varphi$, where $\varphi$ is a closed formula over $\Theta(\bar C)$.

\begin{defi}  \label{valsat}
	Let  $\mathfrak{A}= \langle U, \cdot^\mathfrak{A} \rangle$ be a four-valued modal structure over $\Theta(\bar C)$ (recall Definition~\ref{structure}) such that $(C \cup \bar{C})^\mathfrak{A}=U$. Given a \tms-valuation $v$ over $\mathfrak{A}$, we say that a signed formula $\laL\lab\varphi$ is {\em true} in $v$ if $v(\varphi)=\laL$; otherwise, it is {\em false}  in $v$.  If $\varphi$ is a formula over $\Theta(\bar C)$ in which $x$ is the unique variable possibly occurring free, then the closed formula $\varphi[x/c]$ will be denoted by $\varphi(c)$.
\end{defi}

\begin{obs} \label{obs-valuations} \ \\
	(1) Note that, by Remark~\ref{diag-cons-rem}, the structure  $\mathfrak{A}_U$ over $\Theta_U=\Theta(\bar U)$ obtained from  $\mathfrak{A}$ as in Definition~\ref{diagram} is such that $(C \cup \bar{U})^{\mathfrak{A}_U}=U$. Then, the kind of structures considered in Definition~\ref{valsat} are enough to analyze the logic \tms, since $(\mathfrak{A}_U)_U=\mathfrak{A}_U$. This fact will be used in the proof of soundness of the tableau system for \tms\ (see Theorem~\ref{sound-tableaux} below).\\
	(2) Using the previous notation, and from the tables defining the universal quantifier, we obtain the following, for every closed formula over $\Theta(\bar C)$ of the form $\forall x \varphi$: 
	
	\begin{itemize}
		\item[-] $\laT\lab\forall x \varphi$ is true in $v$ iff  $\laT\lab \varphi(c)$ is true in $v$, for every $c \in C \cup \bar{C}$;
		\item[-] $\lat\lab\forall x \varphi$ is true in $v$ iff  $\lat\lab \varphi(c)$ is true in $v$ for some $c \in C \cup \bar{C}$ and, for every $c' \in (C \cup \bar{C})\setminus\{c\}$, either $\laT\lab \varphi(c')$ is true in $v$ or $\lat\lab \varphi(c')$ is true in $v$;
		\item[-] $\laf\lab\forall x \varphi$ is true in $v$ iff  $\laf\lab \varphi(c)$ is true in $v$ for some $c \in C \cup \bar{C}$,  and $\laF\lab \varphi(c')$ is false in $v$ for every $c' \in C \cup \bar{C}$;
		\item[-] $\laF\lab\forall x \varphi$ is true in $v$ iff  $\laF\lab \varphi(c)$ is true in $v$ for some $c \in C \cup \bar{C}$.
	\end{itemize}
\end{obs}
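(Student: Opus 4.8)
The plan is to handle the two items of the remark separately; both are bookkeeping consequences of the definitions recalled above.

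For item (1), I would start from Remark~\ref{diag-cons-rem}, which already records $(C\cup\bar U)^{\mathfrak{A}_U}=U$, so that only the idempotence claim $(\mathfrak{A}_U)_U=\mathfrak{A}_U$ remains to be checked. For this I would apply Definition~\ref{diagram} to the structure $\mathfrak{A}_U$ over the signature $\Theta_U=\Theta(\bar U)$, whose set of individual constants is $C\cup\bar U$: since $(C\cup\bar U)^{\mathfrak{A}_U}$ equals the whole domain $U$ of $\mathfrak{A}_U$, the trivial clause of Definition~\ref{diagram} (``if the interpretations of the constants already exhaust the domain, the diagram expansion coincides with the structure itself'') gives $(\Theta_U)_U=\Theta_U$ and $(\mathfrak{A}_U)_U=\mathfrak{A}_U$. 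In particular, every structure $\mathfrak{A}$ over $\Theta(\bar C)$ with $(C\cup\bar C)^{\mathfrak{A}}=U$, as in Definition~\ref{valsat}, is its own diagram expansion, so the parameter set ``$C\cup\bar U$'' occurring in clause (5) of Definition~\ref{Tm-sem} is simply $C\cup\bar C$ for such $\mathfrak{A}$, and that clause reads $v(\forall x\varphi)\in\tilde{\forall}_4^d\big(\{v(\varphi(c)) : c\in C\cup\bar C\}\big)$.

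For item (2), I would use that $\tilde{\forall}_4^d$ is deterministic, so that $\tilde{\forall}_4^d(X)$ is a single value and hence $v(\forall x\varphi)=\tilde{\forall}_4^d(X)$ with $X=\{v(\varphi(c)) : c\in C\cup\bar C\}$, a nonempty subset of $\{\laT,\lat,\laf,\laF\}$ (nonempty because $\bar C\neq\emptyset$). It then suffices to read off from the table defining $\tilde{\forall}_4^d$ the exact condition on $X$ under which $\tilde{\forall}_4^d(X)$ takes each of the four values: $\tilde{\forall}_4^d(X)=\laF$ iff $\laF\in X$; $\tilde{\forall}_4^d(X)=\laf$ iff $\laF\notin X$ and $\laf\in X$; $\tilde{\forall}_4^d(X)=\lat$ iff $X\subseteq\{\laT,\lat\}$ and $\lat\in X$; and $\tilde{\forall}_4^d(X)=\laT$ iff $X=\{\laT\}$. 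Rephrasing ``$\laF\in X$'' as ``$\laF\lab\varphi(c)$ is true in $v$ for some $c$'', ``$\laF\notin X$'' as ``$\laF\lab\varphi(c')$ is false in $v$ for every $c'$'', ``$\laf\in X$'' as ``$\laf\lab\varphi(c)$ is true in $v$ for some $c$'', and ``$X\subseteq\{\laT,\lat\}$'' as ``$\laT\lab\varphi(c')$ or $\lat\lab\varphi(c')$ is true in $v$ for every $c'$'' turns these equivalences into the four displayed bullets; in the $\lat$ case one only has to note that the formulation with ``for every $c'\neq c$'' together with $\lat\lab\varphi(c)$ true is equivalent to the formulation quantifying over every $c'$.

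There is no genuine obstacle: item (1) is immediate from Remark~\ref{diag-cons-rem} and the trivial clause of Definition~\ref{diagram}, and item (2) is a finite inspection of the $\tilde{\forall}_4^d$ table. The only point needing a moment's attention is to verify that the rows exhibited for $\tilde{\forall}_4^d$ --- those for $\laF\in X$, $\{\laT\}$, $\{\lat\}$, $\{\laT,\lat\}$, $\{\laf\}$, $\{\laf,\lat\}$, $\{\laf,\lat,\laT\}$, $\{\laf,\laT\}$ --- really exhaust all nonempty $X\subseteq\{\laT,\lat,\laf,\laF\}$, so that the four ``iff'' clauses are true equivalences and not merely one-sided implications.
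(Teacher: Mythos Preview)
Your proposal is correct and matches the paper's intent: the paper does not give a separate proof of this remark, treating both items as immediate consequences of Remark~\ref{diag-cons-rem}, the trivial clause of Definition~\ref{diagram}, and a direct reading of the table for $\tilde{\forall}_4^d$---precisely the unpacking you carry out. Your only addition is making explicit the case exhaustion for $\tilde{\forall}_4^d$ and the equivalence of the ``for every $c'\neq c$'' formulation in the $\lat$ case, which the paper leaves to the reader.
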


\noindent
By  Remark~\ref{obs-valuations},  when defining a set of tableau rules for \tms, all the rules for quantifiers (with exception of  $\laF\lab\forall x \varphi$) will be {\em reusable}, that is, they can be potentially used with all the constants. The rule for  $\laF\lab\forall x \varphi$ can be used just one time and with a fresh constant, just like happens with tableaux for {\bf CL*} for the signed formulas $\laF(\forall x \varphi)$ and $\laT(\exists x \varphi)$ (see \cite{smul:1968}).

The previous considerations lead us to the following tableau rules for dealing with quantifiers in \tms:

$$
\begin{array}{ll}
	(\laT\forall) \ \displaystyle \frac{\laT\lab \forall x \varphi}{\laT\lab \varphi(c)}  & \hspace{1.5cm} (\laF\forall) \ \displaystyle \frac{\laF\lab \forall x \varphi}{\laF\lab \varphi(c)} \\[2mm]
	&\\[2mm]
\end{array}$$
$$
\begin{array}{ll}
	(\lat\forall) \ \displaystyle \frac{\lat\lab \forall x \varphi}{\lat\lab \varphi(c), \ \lat\lab \varphi(c') \mid \lat\lab \varphi(c), \ \laT\lab \varphi(c')}  &\\[2mm]
	&\\[2mm]
\end{array}$$
$$
\begin{array}{ll}
	(\laf\forall) \ \displaystyle \frac{\laf\lab \forall x \varphi}{\laf\lab \varphi(c), \ \laf\lab \varphi(c') \mid \laf\lab \varphi(c), \ \laT\lab \varphi(c') \mid \laf\lab \varphi(c), \ \lat\lab \varphi(c')}\\[2mm]
\end{array}
$$

\noindent 
{\bf Provisos:} 
\begin{enumerate}
	\item 
	In $(\laT\forall)$, $c$ can be any constant.  This rule is reusable, that is, it can be used several times with different constants on each branch in which the antecedent of the rule appears.
	\item In $(\lat\forall)$, $c$  must be a constant that has not yet appeared in the branch, and $c'$ can be any constant different from $c$.  This rule is reusable, that is, it can be used several times with different constants on each branch in which the antecedent of the rule appears, in the following sense. After branching when apply the rule for the first time, each of the two branches can split into two new branches: the left-side new branch contains the signed formula $\lat\lab \varphi(c'')$, while the right-side branch contains the signed formula $\laT\lab \varphi(c''')$ for any $c''$ and $c'''$   different from $c$.
	\item In $(\laf\forall)$, $c$  must be a constant that has not yet appeared in the branch, and $c'$ can be any constant different from $c$.  This rule is reusable, that is, it can be used several times with different constants on each branch in which the antecedent of the rule appears, in the following sense. After branching when apply the rule for the first time, each of the three branches can splits into three new branches: the first new branch (from left to right) contains the signed formula $\laf\lab \varphi(c'')$, the second new branch contains the signed formula $\laT\lab \varphi(c''')$, and the third new branch  contains the signed formula $\lat\lab \varphi(c'''')$,  for any $c''$,  $c'''$  and $c''''$  different from $c$.
	\item In $(\laF\forall)$, $c$ must be a constant that has not yet appeared in the branch. This rule can be used only once on each branch in which the antecedent of the rule appears.
\end{enumerate}

\begin{defi} \label{closed} A branch of a tableau for \tms\ generated by a signed formula is said to be {\em closed} if it contains two signed formulas $\laL\lab \varphi$ and $\laL'\lab \varphi'$ such that $\varphi \sim \varphi'$ (recall Definition~\ref{variant}),  and $\laL\neq\laL'$. In particular, a branch is closed  if it contains two signed formulas $\laL\lab \varphi$ and $\laL'\lab \varphi$ such that $\laL\neq\laL'$.\footnote{Since, by definition, $\varphi \sim \varphi$ for every $\varphi$.}  A tableau is {\em closed} if any branch is closed. 
\end{defi}

\begin{defi} A closed formula $\varphi$ over $\Theta$ is said to be {\em provable} by tableaux in \tms, denoted by  $\models_{\mathcal{T}(\tms)} \varphi$, if there exists a closed tableau in \tms\ starting from $\laL\lab \varphi$ for every $\laL \in \{\laF, \laf\}$. Given a finite set $\Gamma=\{\gamma_1,\ldots,\gamma_n\}$ of closed formulas over $\Theta$, we say that $\varphi$ is {\em provable from $\Gamma$} by tableaux in \tms, denoted by  $\Gamma \models_{\mathcal{T}(\tms)} \varphi$, if the closed formula $(\gamma_1 \to (\gamma_2 \to \ldots \to(\gamma_n \to \varphi) \ldots ))$ is provable by tableaux in \tms.
\end{defi}

\noi
To prove the soundness of the tableau system for \tms, some definitions are required.

\begin{defi}  \label{valsat1}
	Let  $\mathfrak{A}= \langle U, \cdot^\mathfrak{A} \rangle$ be a four-valued modal structure over $\Theta(\bar C)$ such that $(C \cup \bar{C})^\mathfrak{A}=U$, and let $v$ be  a \tms-valuation over $\mathfrak{A}$.  We say that a branch $\theta$ of a tableau for \tms\ is {\em true under $v$}, or $v$ {\em satisfies} $\theta$, if every signed formula occurring in $\theta$ is true in $v$. A tableau $\F$ for \tms\ is said to be {\em true under $v$}, or $v$ {\em satisfies} $\F$,  if some branch of $\F$ is true under $v$.
\end{defi}

\begin{obs} \label{obs-lema} \ \\
	(1) Observe that, by the previous definitions, a closed branch of a tableau is unsatisfiable. Hence,  any closed tableau is unsatisfiable.\\
	(2) If $\varphi$ is a non-atomic formula, there is exactly one tableau rule, say $R$, appliable to a signed formula of the form $\laL(\varphi)$. It is straightforward to see that if a valuation $v$ satisfies  $\laL(\varphi)$, then it also satisfies all the formulas of at least one of the branches resulting from the application of such rule $R$ to $\laL(\varphi)$.
\end{obs}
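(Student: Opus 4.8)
The plan is to prove the two items of Remark~\ref{obs-lema} by unwinding the relevant definitions; item~(1) is immediate, and item~(2) is a finite case analysis together with one genuinely delicate point. For item~(1): let $\theta$ be a closed branch of a tableau for \tms. By Definition~\ref{closed} it contains signed formulas $\laL\lab\varphi$ and $\laL'\lab\varphi'$ with $\varphi\sim\varphi'$ and $\laL\neq\laL'$. If some \tms-valuation $v$ satisfied $\theta$ in the sense of Definition~\ref{valsat1}, then both signed formulas would be true in $v$, i.e. $v(\varphi)=\laL$ and $v(\varphi')=\laL'$; but clause~6 of Definition~\ref{Tm-sem} forces $v(\varphi)=v(\varphi')$, so $\laL=\laL'$, a contradiction. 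Hence no closed branch is satisfiable. Since every branch of a closed tableau $\F$ is closed, and by Definition~\ref{valsat1} a valuation satisfies $\F$ only if it satisfies one of its branches, no valuation satisfies $\F$.

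For item~(2), the plan has two parts. First, uniqueness of the applicable rule is read directly off the list of rules: for each principal symbol $\#\in\{\neg,\Box,\to,\forall\}$ and each sign $\laL$ there is exactly one rule whose numerator has the shape $\laL\lab\#(\cdots)$, and a non-atomic closed formula has a uniquely determined principal symbol. Second, for the transfer of satisfaction, suppose $v(\varphi)=\laL$ with $\varphi=\#(\psi_1,\dots,\psi_n)$. When $\#$ is $\neg$, $\Box$ or $\to$, the corresponding clause of Definition~\ref{Tm-sem} gives $\laL\in\tilde{\#}(v(\psi_1),\dots,v(\psi_n))$, and each rule for $\laL\lab\#(\cdots)$ is set up so that a value-tuple $(a_1,\dots,a_n)$ lies in the preimage $\{(a_1,\dots,a_n):\laL\in\tilde{\#}(a_1,\dots,a_n)\}$, read off the table for $\tilde{\neg}$, $\tilde{\Box}_1$ or $\tilde{\to}$, if and only if it is compatible with the constraints of at least one branch of the rule; thus $(v(\psi_1),\dots,v(\psi_n))$ is compatible with some branch, and appending that branch to the current branch leaves all its signed formulas true in $v$. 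When $\#$ is $\forall$ one argues in the same way, reading Remark~\ref{obs-valuations} in place of a table: from $v(\forall x\psi)=\laT$ it yields $v(\psi(c))=\laT$ for every $c$, so the unique branch created by $(\laT\forall)$ is true in $v$, and for each of the other three signs the corresponding clause of Remark~\ref{obs-valuations} exhibits a witness $c_0$ realizing, on one of the branches, the value demanded by the rule.

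The routine but tedious portion will be the finite verification that, for each connective rule, the branches exhaust the preimage of the sign — in particular for the five-branch rule $\laT\lab(\varphi\to\psi)$ and the three-branch rules $\laf\lab(\varphi\to\psi)$, $\laf\lab\Box\varphi$ and $\laF\lab\Box\varphi$. The main obstacle, however, is the freshness proviso attached to $(\lat\forall)$, $(\laf\forall)$ and $(\laF\forall)$: the witness $c_0$ supplied by Remark~\ref{obs-valuations} need not be the fresh constant $c$ the rule requires, and since $c^{\mathfrak A}$ is already fixed there is no reason for $v(\psi(c))$ to carry the required sign, so the statement as literally phrased fails for these rules. The way around this is to pass to the structure $\mathfrak A'$ that agrees with $\mathfrak A$ except that $c^{\mathfrak A'}=c_0^{\mathfrak A}$, and to a \tms-valuation $v'$ over $\mathfrak A'$ that coincides with $v$ on every formula not containing $c$ — hence on every formula already on the branch — while $v'(\psi(c))=v(\psi(c_0))$; this is legitimate precisely because $c$ does not occur in the branch, and it is exactly the manoeuvre that the soundness theorem (Theorem~\ref{sound-tableaux}) must perform. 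So, strictly speaking, item~(2) should be read as asserting that a valuation agreeing with $v$ on the given branch satisfies one of the resulting branches.
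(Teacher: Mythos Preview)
The paper does not actually prove this remark: it is stated as an observation, and the soundness paragraph immediately following it simply cites part~(2) with the phrase ``it is easy to see that $v$ also satisfies $\F_{k+1}$''. Your argument for item~(1) is exactly the intended unwinding of Definitions~\ref{closed}, \ref{valsat1} and clause~6 of Definition~\ref{Tm-sem}, and your case analysis for the propositional rules in item~(2) is correct and matches the spirit of the paper.

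Where your treatment goes beyond the paper is the freshness issue, and you are right to flag it. As literally stated, item~(2) fails for the rules $(\lat\forall)$, $(\laf\forall)$ and $(\laF\forall)$: the witness constant supplied by Remark~\ref{obs-valuations}(2) need not coincide with the fresh constant $c$ the rule forces, and nothing guarantees that $v(\psi(c))$ carries the required sign. The paper's soundness argument uses the remark in precisely this literal form (same $v$ throughout), so the gap is genuine there too. Your repair---reinterpreting the fresh constant as $c^{\mathfrak A'}=c_0^{\mathfrak A}$ and passing to a valuation $v'$ that agrees with $v$ on every formula not containing $c$, hence on everything already on the branch---is the standard Smullyan-style manoeuvre, and it is exactly what is needed to make Theorem~\ref{sound-tableaux} go through. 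One point worth making explicit in your write-up: because the semantics is non-deterministic, the existence of such a $v'$ is not automatic from the structure $\mathfrak A'$ alone; the clean construction is to set $v'(\chi)=v(\chi[c/c_0])$ for every sentence $\chi$, and then check that this is a \tms-valuation over $\mathfrak A'$.
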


\noi  Assume that $\varphi$ is a closed sentence such that $\not\models_\tms \varphi$. From this, and taking into account part~(1) of Remark~\ref{obs-valuations}, there exists some  structure $\mathfrak{A}$ over $\Theta(\bar C)$ with $(C \cup \bar{C})^\mathfrak{A}=U$, as well as a \tms-valuation $v_0$ over it such that $v_0(\varphi) \in \{\laF,\laf\}$. That is, $v_0$ satisfies  $\laL\lab \varphi$ for some $\laL \in \{\laF, \laf\}$.  Now, suppose that $\F$ is a completed tableau in \tms\  starting from $\laL\lab \varphi$. By definition, $\F$ is obtained from a finite sequence of tableaux $\F_0,\ldots,\F_n=\F$, where $\F_0=\laL\lab \varphi$. Suppose that $v$ is a \tms-valuation such that $v$ satisfies  $\F_k$. From part~(2) of Remark~\ref{obs-lema}, it is easy to see that $v$ also satisfies $\F_{k+1}$, for every $0 \leq k \leq n-1$. In particular, this property holds for the valuation $v_0$. Since $v_0$ satisfies $\F_0$, it follows that $v_0$ satisfies $\F$. Hence, by part~(1) of Remark~\ref{obs-lema}, $\F$ cannot be closed. In other words, every completed tableau for  $\laL\lab \varphi$ is open, for some  $\laL \in \{\laF, \laf\}$. This means that  $\not\models_{\mathcal{T}(\tms)} \varphi$. Equivalently: $\models_{\mathcal{T}(\tms)} \varphi$ implies that $\models_\tms \varphi$. This lead us to the following result:

\begin{teo} [Soundness of tableaux for \tms] \label{sound-tableaux} Let $\Gamma \cup\{\varphi\}$ be a finite set of closed formulas over $\Theta$. If $\Gamma \models_{\mathcal{T}(\tms)} \varphi$ \ then \ $\Gamma \models_\tms \varphi$.
\end{teo}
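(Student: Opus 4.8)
The plan is to reduce the statement about derivability-from-premises to the single-formula soundness claim already established in the paragraph preceding the theorem, and then to make that preliminary argument rigorous. Recall that by definition $\Gamma \models_{\mathcal{T}(\tms)} \varphi$ means exactly that the closed formula $\chi = (\gamma_1 \to (\gamma_2 \to \ldots \to(\gamma_n \to \varphi)\ldots))$ is provable by tableaux in \tms, i.e. that $\models_{\mathcal{T}(\tms)} \chi$. So the first step is to invoke the propositional-style soundness already sketched in the text, namely that $\models_{\mathcal{T}(\tms)} \chi$ implies $\models_\tms \chi$, and then to pass from $\models_\tms \chi$ to $\Gamma \models_\tms \varphi$ using the Deduction Metatheorem for \tms\ (Theorem~\ref{DMQ}) together with modus ponens. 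Since all the $\gamma_i$ and $\varphi$ are closed formulas, the proviso on (Gen) in Theorem~\ref{DMQ} is vacuously satisfied, so iterating the DMT gives $\Gamma \vdash_\tms \varphi$, and then the semantic version $\Gamma \models_\tms \varphi$ follows by soundness of the Hilbert calculus (Theorem~\ref{sound-compl-lms}); alternatively one argues the semantic implication $\models_\tms \chi \Rightarrow \Gamma \models_\tms \varphi$ directly from the truth table of $\tilde{\to}$ and Definition~\ref{semconsFOL}.

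Second, I would spell out the core claim $\models_{\mathcal{T}(\tms)} \chi \Rightarrow \models_\tms \chi$, which is the substance of the theorem. The argument is the contrapositive: assume $\not\models_\tms \chi$. By Definition~\ref{semconsFOL} and part~(1) of Remark~\ref{obs-valuations}, there is a four-valued modal structure $\mathfrak{A}$ over $\Theta(\bar C)$ with $(C\cup\bar C)^\mathfrak{A}=U$ and a \tms-valuation $v_0$ over $\mathfrak{A}$ with $v_0(\chi)\in\{\laF,\laf\}$, so $v_0$ satisfies $\laL\lab\chi$ for some $\laL\in\{\laF,\laf\}$. The key lemma is that truth of a tableau under a fixed valuation is preserved by every rule application: if $v$ satisfies a tableau $\F_k$, then $v$ satisfies the next tableau $\F_{k+1}$ obtained by applying any \tms-rule. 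For the propositional rules and the $(\laT\forall)$, $(\lat\forall)$, $(\laf\forall)$, $(\laF\forall)$ quantifier rules this follows from part~(2) of Remark~\ref{obs-lema} and the characterizations of when $\laL\lab\forall x\varphi$ is true listed in Remark~\ref{obs-valuations}(2); one has to check in particular that the $(\laF\forall)$ rule, which introduces a fresh constant $c$, is sound because if $\laF\lab\forall x\varphi$ is true in $v$ then $\laF\lab\varphi(c)$ is true in $v$ for some $c\in C\cup\bar C$, and since $v$ was chosen over a structure where $(C\cup\bar C)^\mathfrak{A}=U$ the fresh constant can be interpreted so as to name such a witness — here one may need to slightly extend $v_0$ (or note that $\bar C$ is large enough) to interpret the freshly introduced constants. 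Consequently, starting from $\F_0=\laL\lab\chi$, which $v_0$ satisfies, an induction on the length of the tableau construction shows $v_0$ satisfies every completed tableau $\F$ for $\laL\lab\chi$; since a satisfiable tableau has an open branch (a closed branch contains $\laL''\lab\psi$ and $\laL'''\lab\psi'$ with $\psi\sim\psi'$ and $\laL''\neq\laL'''$, which by clause~6 of Definition~\ref{Tm-sem} cannot both be true in $v_0$), no completed tableau for $\laL\lab\chi$ is closed. Hence $\not\models_{\mathcal{T}(\tms)}\chi$, and unwinding gives $\Gamma\not\models_{\mathcal{T}(\tms)}\varphi$.

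I expect the main obstacle to be the careful handling of the fresh constants introduced by the reusable quantifier rules and by $(\laF\forall)$: the preservation-of-truth lemma requires that, at the moment a rule introduces a constant not yet appearing in the branch, the ambient valuation $v$ can be taken (or adjusted) to interpret that constant as a suitable witness, and one must ensure such adjustments never clash with constants already constrained by the branch. This is exactly the place where the hypothesis $(C\cup\bar C)^\mathfrak{A}=U$ — that every domain element is named — and the infinitude of $\bar C$ are used, mirroring the standard treatment of $\delta$-rules in first-order tableaux; the bookkeeping is routine but is the only genuinely delicate point. The remaining steps — the base case, the propositional rules, the reduction via the DMT, and the observation that closed branches are unsatisfiable — are entirely straightforward given the results already proved in the excerpt.
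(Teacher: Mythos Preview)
Your proposal is correct and follows essentially the same route as the paper: reduce to the case $\Gamma=\emptyset$ via the (semantic) deduction metatheorem for sentences, then run the contrapositive argument already sketched in the paragraph preceding the theorem, showing that a valuation falsifying $\chi$ satisfies every stage $\F_k$ of a completed tableau for $\laL\lab\chi$ (via Remark~\ref{obs-lema}(2)) and hence that no such tableau can close (via Remark~\ref{obs-lema}(1)). Your explicit worry about the fresh constants in $(\laF\forall)$ is more detailed than the paper's own treatment, which simply defers to Remark~\ref{obs-lema}(2), but this is a matter of exposition rather than a difference in approach.
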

\begin{proof} Taking into consideration the definition of $\models_{\mathcal{T}(\tms)}$, as well as the fact that $\models_\tms$ satisfies the deduction metatheorem for sentences, it is enough to prove the result for $\Gamma=\emptyset$. But it follows from the considerations above.
\end{proof}

\subsection{Completeness of the tableau system for \tms}

In this subsection, the proof of completeness of the tableau system introduced for \tms\ will be obtained. As in the proof for first-order classical logic (see~\cite[Ch.~V, \S 3]{smul:1968}),  a suitable adaptation to the present framework of the notion of Hintikka sets will be useful to our purposes.

\begin{defi} \label{hintikka}
	A set $\Gamma$ of signed formulas over $\Theta(\bar C)$ is said to be a {\em Hintikka set} for \tms\ in  the universe $C \cup \bar C$ if the following holds:
	\begin{enumerate}
		\item If $\laL\lab\varphi$ and $\laL'\lab\varphi'$ belong to $\Gamma$ such that $\varphi \sim \varphi'$, then $\laL=\laL'$.
		In particular, if $\laL\lab\varphi$ and $\laL'\lab\varphi$ belong to $\Gamma$ then $\laL=\laL'$.
		\item If $\laL\lab \neg \varphi$ belongs to $\Gamma$ then $\neg\laL\lab \varphi$ belongs to $\Gamma$, where $\neg\laL$ denotes the unique element of the set $\tilde{\neg} \, \laL$.
		\item If $\laT\lab  \Box \varphi$ or $\lat\lab  \Box \varphi$ belong to $\Gamma$ then $\laT\lab  \varphi$ belongs to $\Gamma$.
		\item If $\laF\lab  \Box \varphi$ or  $\laf\lab  \Box \varphi$ belong to $\Gamma$ then  $\laL\lab \varphi$ belongs to $\Gamma$ for a unique $\laL \in \{\lat, \laf, \laF\}$.
		\item If $\laT\lab  (\varphi \to \psi)$ belongs to $\Gamma$ then: either  $\laF\lab  \varphi$   belongs to $\Gamma$, or $\laT\lab  \psi$ belongs to $\Gamma$, or $\lat\lab \varphi$ and $\lat\lab \psi$  belong to $\Gamma$, or  $\laf\lab  \varphi$ and $\lat\lab  \psi$  belong to $\Gamma$, or $\laf\lab  \varphi$ and $\laf\lab  \psi$  belong to $\Gamma$.
		\item If $\lat\lab  (\varphi \to \psi)$ belongs to $\Gamma$ then: either $\laT\lab  \varphi$ and $\lat\lab  \psi$  belong to $\Gamma$, or  $\lat\lab  \varphi$ and $\lat\lab  \psi$  belong to $\Gamma$, or  $\laf\lab  \varphi$ and $\lat\lab  \psi$  belong to $\Gamma$, or  $\laf\lab  \varphi$ and $\laf\lab  \psi$  belong to $\Gamma$.
		\item If $\laf\lab (\varphi \to \psi)$ belongs to $\Gamma$ then: either $\laT\lab \varphi$ and $\laf\lab \psi$  belong to $\Gamma$, or  $\lat\lab \varphi$ and $\laf\lab \psi$  belong to $\Gamma$, or  $\lat\lab \varphi$ and $\laF\lab \psi$  belong to $\Gamma$.
		\item If $\laF\lab (\varphi \to \psi)$ belongs to $\Gamma$ then $\laT\lab \varphi$ and $\laF\lab \psi$ belong to $\Gamma$.
		\item If $\laT\lab\forall x \varphi$ belongs to $\Gamma$ then  $\laT\lab \varphi(c)$ belongs to $\Gamma$, for every $c \in C \cup \bar C$;
		\item If $\lat\lab\forall x \varphi$ belongs to $\Gamma$ then $\lat\lab \varphi(c)$ belongs to $\Gamma$ for some $c \in C \cup \bar C$ and, for every $c' \in (C \cup \bar C)\setminus\{c\}$, either $\laT\lab \varphi(c')$ belongs to $\Gamma$ or $\lat\lab \varphi(c')$ belongs to $\Gamma$;
		\item If $\laf\lab\forall x \varphi$ belongs to $\Gamma$ then $\laf\lab \varphi(c)$ belongs to $\Gamma$ for some $c \in C \cup \bar C$ and, for every $c' \in (C \cup \bar C)\setminus\{c\}$: either $\laT\lab \varphi(c')$ belongs to $\Gamma$, or $\lat\lab \varphi(c')$ belongs to $\Gamma$, or $\laf\lab \varphi(c')$ belongs to $\Gamma$, and $\laF\lab \varphi(c')$ does not belong to $\Gamma$;
		\item If $\laF\lab\forall x \varphi$ belongs to $\Gamma$ then $\laF\lab \varphi(c)$ belongs to $\Gamma$ for some $c \in C \cup \bar C$.
	\end{enumerate}
\end{defi}

\begin{defi} \label{complexity} Let $\Theta$ be a  predicate signature. The {\em complexity} $\lac(\varphi)$ of a formula $\varphi \in For(\Theta)$ is defined recursively as follows: $\lac(\varphi)=0$ if $\varphi$ is atomic; $\lac(\neg\varphi)=\lac(\Box \varphi)=\lac(\forall x \varphi)=\lac(\varphi)+1$; and $\lac(\varphi \to \psi)=\lac(\varphi)+\lac(\psi)+1$.
\end{defi}

\begin{lema} \label{extenv} Let $\Gamma$ be a Hintikka set for \tms\ in the universe $C \cup \bar C$, and let $\mathfrak{A}= \langle U, \cdot^\mathfrak{A} \rangle$ be a four-valued modal structure  over $\Theta(\bar C)$  such that $(C \cup \bar{C})^\mathfrak{A}=U$ (hence, by Definition~\ref{diagram}, $\Theta(\bar C)_U=\Theta(\bar C)$ and  $\mathfrak{A}_U= \mathfrak{A}$). Let $\Gamma_0 = \{ \varphi \in Sen(\Theta(\bar C)) \ : \ \laL\lab\varphi \in \Gamma\}$, and let $v:\Gamma_0 \to \{\laT,\lat,\laf,\laF\}$ be a function defined as follows: $v(\varphi)=\laL$ iff  $\laL\lab\varphi \in \Gamma$.	Then, $v$ is well-defined and there exists a \tms-valuation $\bar v:Sen(\Theta(\bar C)) \to \{\laT,\lat,\laf,\laF\}$ over $\mathfrak{A}$ extending $v$.
\end{lema}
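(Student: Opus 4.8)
The plan is to build $\bar v$ by recursion on the complexity $\lac$ of Definition~\ref{complexity}, making sure at each stage that $\bar v$ agrees with $v$ on the formulas that already carry a sign in $\Gamma$, and then to verify that the function so obtained satisfies all the clauses of Definition~\ref{Tm-sem}. First, clause~1 of Definition~\ref{hintikka} says that no two $\sim$-equivalent signed formulas in $\Gamma$ carry different labels, so $v$ is well defined on $\Gamma_0$ and is $\sim$-invariant there. To secure clause~6 of Definition~\ref{Tm-sem} without extra effort, it is convenient to carry out the recursion on $\sim$-equivalence classes rather than on individual formulas: since $\sim$ respects $\neg$, $\Box$, $\forall x$ and $\to$ and otherwise only collapses a void quantifier $\forall x\chi$ with $\chi$ and renames bound variables, the class of a formula determines the classes of its immediate subformulas, and each class has a least $\lac$-value, attained by its quantifier-reduced members; the recursion runs on that least value, which strictly decreases on passing to subformula-classes.

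For the base case, clause~1 of Definition~\ref{Tm-sem} forces $\bar v(Pc_1\ldots c_n)=P^{\mathfrak A}(c_1^{\mathfrak A},\ldots,c_n^{\mathfrak A})$ (using $\mathfrak A_U=\mathfrak A$); this agrees with $v$ on the atomic members of $\Gamma_0$ because $\mathfrak A$ is the structure associated with $\Gamma$, i.e.\ $P^{\mathfrak A}(c_1^{\mathfrak A},\ldots,c_n^{\mathfrak A})$ equals the label of any signed occurrence of $Pc_1\ldots c_n$ in $\Gamma$, which is the only point at which a link between $\mathfrak A$ and $\Gamma$ is used. For the inductive step: if the principal connective is $\neg$, then $\bar v(\neg\varphi)$ is forced to be the unique element of $\tilde{\neg}\,\bar v(\varphi)$, and clause~2 of Definition~\ref{hintikka}, together with the fact that $\tilde{\neg}$ is an involution, gives $\bar v(\neg\varphi)=v(\neg\varphi)$ whenever $\neg\varphi\in\Gamma_0$. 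If the principal connective is $\Box$ or $\to$, the multioperator is genuinely non-deterministic; when the formula lies in $\Gamma_0$ we let $\bar v$ choose $v$ of it, and clauses~3--4 (for $\Box$) or~5--8 (for $\to$) of Definition~\ref{hintikka} tell us exactly which signed subformulas are in $\Gamma$, hence which values the inductive hypothesis has already pinned down for $\varphi$ and $\psi$; a glance at the corresponding line of the table for $\tilde{\Box}_1$ or $\tilde{\to}$ then confirms that the wanted value is admissible. When the formula is not in $\Gamma_0$ any admissible value serves, since the multioperators are non-empty; and to keep $\bar v$ $\sim$-invariant we make the same choice for $\sim$-equivalent formulas of the same complexity, which are related by renaming of bound variables inside, so consistency is routine.

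The quantifier clause is where the argument has to be handled with care, and is the main obstacle. Here $\bar v(\forall x\varphi)$ is forced, since $\tilde{\forall}_4^d$ is deterministic, to equal $\tilde{\forall}_4^d\!\big(X(\varphi,x,\bar v)\big)$ with $X(\varphi,x,\bar v)=\{\bar v(\varphi(c)):c\in C\cup\bar C\}$; it is precisely the hypothesis $(C\cup\bar C)^{\mathfrak A}=U$ that makes this the same set as the one figuring in clauses~9--12 of Definition~\ref{hintikka}. Suppose $\forall x\varphi\in\Gamma_0$ carries label $\laL$. Clause~9 forces $\laT\lab\varphi(c)\in\Gamma$ for every $c$, so $X=\{\laT\}$; clause~10 forces every $\varphi(c)$ to be signed with a label in $\{\laT,\lat\}$ and at least one with $\lat$; clause~11 forces every $\varphi(c)$ to be signed with a label in $\{\laT,\lat,\laf\}$, none with $\laF$, and at least one with $\laf$; clause~12 only forces $\laF\lab\varphi(c)\in\Gamma$ for some $c$. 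By the inductive hypothesis the labels coincide with the values of $\bar v$, and then the defining table of $\tilde{\forall}_4^d$ gives $\tilde{\forall}_4^d(X)=\laL$ in each case — the relevant facts being that $\tilde{\forall}_4^d$ takes the value $\lat$ on every nonempty subset of $\{\laT,\lat\}$ containing $\lat$, the value $\laf$ on every subset of $\{\laT,\lat,\laf\}$ containing $\laf$, the value $\laF$ on every set containing $\laF$, and the identity on singletons. This last fact also shows that passing through $\varphi(c)$ automatically collapses void quantifiers and is unaffected by renaming bound variables, so the construction is indeed $\sim$-invariant at the quantifier step. Assembling the cases shows that $\bar v$ is a \tms-valuation over $\mathfrak A$ and, by construction, extends $v$.
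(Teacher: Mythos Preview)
Your argument follows the same plan as the paper's: induction on the complexity measure $\lac$, with the value at each step propagated to all $\sim$-variants, taking the $\Gamma$-label whenever the formula lies in $\Gamma_0$ and an arbitrary admissible value otherwise. Running the recursion on $\sim$-equivalence classes rather than on individual formulas (as the paper does, then propagating to variants of greater complexity) is a harmless repackaging of the same idea.

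The one substantive difference is your treatment of the atomic base case, and here you are in fact more careful than the paper. You correctly observe that clause~1 of Definition~\ref{Tm-sem} \emph{forces} $\bar v(Pc_1\ldots c_n)=P^{\mathfrak A}(c_1^{\mathfrak A},\ldots,c_n^{\mathfrak A})$, and you therefore invoke the assumption that $\mathfrak A$ is ``the structure associated with $\Gamma$'' so that this forced value coincides with the label carried in $\Gamma$. But the lemma as stated takes $\mathfrak A$ to be \emph{any} four-valued structure with $(C\cup\bar C)^{\mathfrak A}=U$; the paper's own proof simply sets $\bar v$ on atomics from $v$ (or arbitrarily, when the atom is absent from $\Gamma_0$) and never checks clause~1. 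In fact the lemma as literally stated fails for arbitrary $\mathfrak A$: if $\laT\lab Pa\in\Gamma$ while $P^{\mathfrak A}(a^{\mathfrak A})=\laF$, no \tms-valuation over $\mathfrak A$ can extend $v$. So the hypothesis you add is genuinely needed, and it is exactly the compatibility that holds for the canonical structure constructed in the proof of Theorem~\ref{hintikka-lemma}, which is the only place the lemma is invoked. You are proving the correct (and only usable) version of the lemma; just be explicit that you have strengthened the hypothesis beyond what the statement gives you.
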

\begin{proof}  By item~1 of Definition~\ref{hintikka}, $v$ is a well-defined function. Moreover, if $\varphi \sim \varphi'$ and both belong to $\Gamma_0$ then $v(\varphi)=v(\varphi')$. In particular, if $\varphi'=Q_1 x_1 \ldots Q_k x_k\psi$ where $k \geq 0$ and $Q_i \in \{\forall,\exists\}$\footnote{If $k=0$ then the sequence of quantifiers is empty. Recall that $\exists x \gamma$ stands for $\neg\forall x \neg \gamma$.} such that $\psi$ is closed (hence $\psi \sim \varphi$) then $\psi \in \Gamma_0$ and $v(\varphi')=v(\psi)=v(\varphi)$, taken into account that $\tilde{\neg}$ is deterministic and the value of $\tilde{\neg}\tilde{\neg}\, a$ is precisely $a$. This fact will be used along this proof. 
	
	The definition of $\bar v(\varphi)$ will be done by induction on the complexity $\lac(\varphi)$ of $\varphi \in Sen(\Theta(\bar C))$. Moreover, at each step it will defined $\bar v(\varphi'):=\bar v(\varphi)$ for every $\varphi' \in Sen(\Theta(\bar C))$ such that  $\varphi \sim \varphi'$ and  $\lac(\varphi') \geq  \lac(\varphi)$. To this respect observe that, if $Q_1 x_1 \ldots Q_k x_k \psi \sim \varphi$, $Q_i \in \{\forall,\exists\}$, $\psi$ is a closed sentence and $\bar{v}(\varphi)$ was already defined then we can define $\bar{v}(Q_1 x_1 \ldots Q_k x_k \psi)=\bar{v}(\psi)=\bar{v}(\varphi)$ in a coherent way.
	
	Thus, assume first that $\varphi$ is an atomic closed formula in $Sen(\Theta(\bar C))$. If $\varphi \in \Gamma_0$ then put $\bar v(\varphi)=v(\varphi)$; if  $\varphi \notin \Gamma_0$ then define $\bar v(\varphi)$ arbitrarily (for instance, $\bar v(\varphi)=\laT$). For every $\varphi' \in Sen(\Theta(\bar C))$ such that $\varphi \sim \varphi'$ and  $\lac(\varphi') \geq  \lac(\varphi)$ define $\bar v(\varphi')=v(\varphi)$. As observed above, if any of such $\varphi'$ is in $\Gamma_0$ then $\bar v(\varphi')=v(\varphi')$.  This completes the definition of $\bar v$ for atomic sentences and all of its variants). 
	
	Assume that $\bar v$ was defined for every sentence $\psi$ in $Sen(\Theta(\bar C))$ such that $\lac(\psi)\leq n$ (as well as for all of its variants with arbitrary complexity), by extending $v$ and satisfying the clauses  for valuation (induction hypothesis --- IH). Namely: $\bar v(\psi)=v(\psi)$ if $\psi \in \Gamma_0$ and $\lac(\psi)\leq n$; if $\psi \sim \gamma$ then $\bar v(\gamma)=\bar v(\psi)$; $\bar v (\neg \psi) \in \tilde{\neg} \, \bar v(\psi)$; $\bar v (\Box \psi) \in \tilde{\Box}_1 \, \bar v(\psi)$; $\bar v(\forall x  \psi) \in \tilde{\forall}_4^d\big(X(\psi,x,\bar v)\big)$, where $X(\psi,x,\bar v) = \big\{\bar v(\psi[x/c]) \ : \  c \in C \cup \bar C\big\}$;\footnote{In particular, if $x$ does not occur free in $\psi$ then $\bar v(Q x \psi)=\bar v(\psi)$ for $Q \in \{\forall,\exists\}$, since $ \tilde{\forall}_4^d\big(\{\laL\}\big)=\{\laL\}$ for every $\laL$ and $\tilde{\neg}$ is deterministic such that the value of $\tilde{\neg}\tilde{\neg}\, a$ is precisely $a$. This is coherent with the fact that $Q x \psi \sim \psi$.} and $\bar v(\psi \to \gamma) \in \bar v(\psi) \,\tilde{\to}\, \bar v(\gamma)$. Now, consider  a formula $\varphi$ such that $\lac(\varphi)=n+1$. We will show how to define $\bar v(\varphi)$ as well as $\bar v(\psi)$ for every $\psi \sim \varphi$ with $\lac(\psi) \geq n+1$. \\[1mm]
	{\bf Case} $\varphi=\neg\psi$. Then, $\bar v(\psi)$ was already defined. Define now $\bar v(\varphi)$ as being the unique element of $\tilde{\neg} \, \bar v(\psi)$. Note that, if $\varphi \in \Gamma_0$ then $\psi \in \Gamma_0$, $\bar v(\psi)=v(\psi)$ (by (IH) and  $\bar v(\varphi)=v(\varphi)$, by Definition~\ref{hintikka}. If $\gamma \sim \varphi$ with $\lac(\gamma)\geq n+1$ define $\bar v(\gamma)=\bar v(\varphi)$. Observe that $\gamma=Q_1 x_1 \ldots Q_k x_k\neg\delta$ where $k \geq 0$, $Q_i \in \{\forall,\exists\}$ and $\delta \sim \psi$ and so $\bar v(\delta)=\bar v(\psi)$, hence this definition is coherent.\\[1mm]
	{\bf Case} $\varphi=\Box\psi$. Then, $\bar v(\psi)$ was already defined. If $\varphi \in \Gamma_0$ then $\psi \in \Gamma_0$ and $v(\varphi) \in \tilde{\Box}_1 \, v(\psi)$, by Definition~\ref{hintikka}; in this case define $\bar v(\varphi)=v(\varphi)$. Now, if $\varphi \notin \Gamma_0$ but $\gamma \in \Gamma_0$ for some $\gamma \sim \varphi$, then $\gamma=Q_1 x_1 \ldots Q_k x_k\Box\gamma'$ such that $k \geq 0$, $Q_i \in \{\forall,\exists\}$ and $\gamma' \sim \psi$. In this case $\gamma' \in \Gamma_0$, by Definition~\ref{hintikka}, and so $v(\gamma')=v(\psi)$ such that $v(\gamma) \in \tilde{\Box}_1 \, v(\gamma')$. Define $\bar v (\varphi)=v(\gamma')$. If $\gamma \notin \Gamma_0$ for every $\gamma \sim \varphi$ define  $v(\varphi) \in \tilde{\Box}_1 \, v(\psi)$ arbitrarily.
	Finally, define $\bar v(\gamma)=\bar v(\varphi)$ for every $\gamma$ such that $\gamma \sim  \varphi$ and $\lac(\gamma) \geq n+1$ (by observing that $\gamma=Q_1 x_1 \ldots Q_k x_k\Box\gamma'$ where $k \geq 0$, $Q_i \in \{\forall,\exists\}$ and $\gamma' \sim \psi$, hence this definition is coherent). \\[1mm]
	{\bf Case} $\varphi=\forall x\psi$. Note that all the values in the set  $X=\{\bar v(\psi[x/c]) \ : \ c \in C \cup \bar{C}\}$ where already defined. As observed above, if $x$ is not free in $\psi$ then $\bar v(\varphi)$ was already defined and $\bar v(\varphi)=\bar v(\psi)$, since $\varphi \sim \psi$. Now, suppose that $x$ occurs free in $\psi$. We have several subcases to analyze.
	\begin{itemize}
		\item {\bf Case} $\varphi \in \Gamma_0$. There are two subcases to analyze:
		\begin{itemize}
			\item {\bf Case} $v(\varphi)=\laF$. Then $\psi[x/c] \in \Gamma_0$  for some $c \in C \cup \bar{C}$ such that $\bar v(\psi[x/c])=v(\psi[x/c])=\laF$. By defining $\bar v (\varphi)=v(\varphi)$ we get that $\bar v(\varphi) \in\tilde{\forall}_4^d\big(X\big)$.
			\item {\bf Case} $v(\varphi)\neq \laF$. Then $\psi[x/c] \in \Gamma_0$ and $\bar v(\psi[x/c])=v(\psi[x/c])$,   for every $c \in C \cup \bar{C}$, hence $v(\varphi) \in \tilde{\forall}_4^d\big(X\big)$, by Definition~\ref{hintikka}. In this case define $\bar v(\varphi)=v(\varphi)$, hence $\bar v(\varphi) \in\tilde{\forall}_4^d\big(X\big)$.
		\end{itemize}
		
		\item {\bf Case} $\varphi \notin \Gamma_0$. By (IH), $\bar v(\psi[x/c])=v(\psi[x/c])$ for every $c \in C \cup \bar{C}$ such that $\psi[x/c] \in \Gamma_0$. Observe that, if $\gamma \sim \varphi$, then $\gamma=Q_1 x_1 \ldots Q_k x_k\delta$, where $k \geq 1$, $Q_j \in \{\forall,\exists\}$ and $\delta \sim \psi[x/x_i]$ for some $1 \leq i \leq k$ such that $x_i$ is free for $x$ in $\psi$, and $x_i$ is the only variable occurring free in $\delta$. Then $\bar v(\delta[x_i/c])=\bar v(\psi[x/c])$ for every $c \in C \cup \bar{C}$, by (IH). Moreover,  $\bar v(\psi[x/c])=v(\delta[x_i/c])$ for every $c \in C \cup \bar{C}$ such that $\delta[x_i/c] \in \Gamma_0$. We have two subcases to analyze:
		\begin{itemize}
			\item There is some $\gamma \in \Gamma_0$ such that $\gamma \sim \varphi$. Then $\gamma=Q_1 x_1 \ldots Q_k x_k\delta$, where  $Q_j \in \{\forall,\exists\}$, $\delta \sim \psi[x/x_i]$ for some $1 \leq i \leq k$ and  $\bar v(\psi[x/c])=v(\delta[x_i/c])$ for every $c \in C \cup \bar{C}$ such that $\delta[x_i/c] \in \Gamma_0$, as observed above. Then $v(\gamma)$, which is given according to Definition~\ref{hintikka}, is such that $v(\gamma) \in \tilde{\forall}_4^d\big(X\big)$.  In this case define  $\bar v(\varphi)=v(\gamma)$, and so $v(\varphi) \in \tilde{\forall}_4^d\big(X\big)$.
			
			\item For every $\gamma \in \Gamma_0$ is not the case that $\gamma \sim \varphi$. In this case define  $\bar v(\varphi) \in \tilde{\forall}_4^d\big(X\big)$ arbitrarily.
		\end{itemize}
	\end{itemize}
	
	\noindent Finally, if $\gamma \sim \varphi$ with $\lac(\gamma)\geq n+1$ define $\bar v(\gamma)=\bar v(\varphi)$. As observed above, $\gamma=Q_1 x_1 \ldots Q_k x_k\delta$, where $k \geq 1$, $Q_j \in \{\forall,\exists\}$ and $\delta \sim \psi[x/x_i]$ for some $1 \leq i \leq k$. Then $\bar v(\delta[x_i/c])=\bar v(\psi[x/c])$ for every $c \in C \cup \bar{C}$, by (IH), and so the value $\bar v(\gamma)$ is coherent with Definition~\ref{Tm-sem}.\\[1mm]
	{\bf Case} $\varphi=\gamma \to \psi$. There are two main cases to analyze:
	\begin{itemize}
		\item {\bf Case} $\varphi \in \Gamma_0$. It produces two subcases:
		\begin{itemize}
			\item {\bf Case} $v(\varphi)=\laT$. Then:  either $\psi\in \Gamma_0$ and $v(\psi)=\laT$, or  $\gamma\in \Gamma_0$ and $v(\gamma)=\laF$. Given that $\laL \,\tilde{\to}\, \laT=\laF \,\tilde{\to}\, \laL=\{\laT\}$ for every $\laL$ then, by defining $\bar v(\varphi)=v(\varphi)$, we guarantee that $\bar v(\varphi) \in \bar v(\gamma) \,\tilde{\to}\, \bar v(\psi)$, by (IH).
			\item {\bf Case} $v(\varphi)\neq \laT$. Then $\gamma \in \Gamma_0$, $\psi \in \Gamma_0$, $\bar v(\gamma)=v(\gamma)$ and $\bar v(\psi)=v(\psi)$, by (IH) and by Definition~\ref{hintikka}. Then, by defining $\bar v(\varphi)=v(\varphi)$ we guarantee that $\bar v(\varphi) \in \bar v(\gamma) \,\tilde{\to}\, \bar v(\psi)$, by  Definition~\ref{hintikka}.
		\end{itemize}
		
		\item {\bf Case} $\varphi \notin \Gamma_0$.  We have two subcases to analyze:
		\begin{itemize}
			\item There is some $\delta \in \Gamma_0$ such that $\delta \sim \varphi$. Then $\delta=Q_1 x_1 \ldots Q_k x_k(\gamma' \to \psi')$, where $\gamma' \sim \gamma$,  $\psi' \sim \psi$, $Q_i \in \{\forall,\exists\}$ and $k \geq 0$. By (IH),  $\bar v(\gamma')=\bar v(\gamma)$ and  $\bar v(\psi')=\bar v(\psi)$. According to Definition~\ref{hintikka}, and as observed in the previous case (where $\varphi \in \Gamma_0$), we have that $\gamma' \to \psi' \in \Gamma_0$, $\alpha \in \Gamma_0$ for some $\alpha \in \{\gamma', \psi'\}$ and $\bar v(\alpha)=v(\alpha)$ if $\alpha \in \Gamma_0 \cap \{\gamma', \psi'\}$. Moreover, $v(\delta) =v(\gamma' \to \psi')$. Then,  by defining $\bar v(\varphi)=v(\delta)$ we guarantee that $\bar v(\varphi) \in \bar v(\gamma) \,\tilde{\to}\, \bar v(\psi)$, by means of an analysis similar to the previous case.
			
			\item For every $\gamma \in \Gamma_0$ is not the case that $\gamma \sim \varphi$. In this case define  $\bar v(\varphi) \in \bar v(\gamma) \,\tilde{\to}\, \bar v(\psi)$ arbitrarily. Observe that, if either $\gamma' \in \Gamma_0$ for some $\gamma' \sim \gamma$ or $\psi' \in \Gamma_0$ for some  $\psi' \sim \psi$ then $\bar v(\gamma)=v(\gamma')$ or $\bar v(\psi) =v(\psi')$, respectively, by (IH). Hence this definition is coherent.
		\end{itemize}
	\end{itemize}
	
	\noindent Finally, let $\delta \sim \varphi$ such that $\lac(\delta) \geq n+1$. Then $\delta=Q_1 x_1 \ldots Q_k x_k(\gamma' \to \psi')$, where $\gamma' \sim \gamma$,  $\psi' \sim \psi$, $Q_i \in \{\forall,\exists\}$ and $k \geq 0$. By defining $\bar v(\delta)=\bar v(\varphi)$ and by  (IH), we have that $\bar v(\delta)=\bar v(\gamma' \to \psi') \in \bar v(\gamma') \,\tilde{\to}\, \bar v(\psi')$, as required.
	
	From this construction, it is clear that $\bar v:Sen(\Theta(\bar C)) \to \{\laT,\lat,\laf,\laF\}$ is a \tms-valuation over $\mathfrak{A}$ extending $v$.	 
\end{proof}

\begin{teo} [Hintikka's Lemma for \tms] \label{hintikka-lemma} Let $\Gamma$ be a Hintikka set for \tms\ in the universe $C \cup \bar C$. Then, there is a four-valued modal structure $\mathfrak{A}= \langle U, \cdot^\mathfrak{A} \rangle$  over $\Theta(\bar C)$  where $(C \cup \bar{C})^\mathfrak{A}=U$, and a \tms-valuation $\bar v$ over it such that $\laL\lab\varphi$ is true in $\bar v$  for every $\laL\lab\varphi \in \Gamma$.
\end{teo}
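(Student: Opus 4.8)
The plan is to reduce the statement to Lemma~\ref{extenv} by building a suitable ``canonical'' structure directly from $\Gamma$. First I would note, using item~1 of Definition~\ref{hintikka}, that the rule ``$v(\varphi)=\laL$ iff $\laL\lab\varphi\in\Gamma$'' determines a well-defined function on the set $\Gamma_0=\{\varphi\in Sen(\Theta(\bar C)) \ : \ \laL\lab\varphi\in\Gamma \text{ for some }\laL\}$; in particular it is well-defined on those atomic sentences that lie in $\Gamma_0$, since an atomic sentence has no variant other than itself.

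Next I would define the structure $\mathfrak{A}=\langle U,\cdot^\mathfrak{A}\rangle$ over $\Theta(\bar C)$ by putting $U=C\cup\bar C$, $c^\mathfrak{A}=c$ for every constant symbol $c\in C\cup\bar C$, and, for every $n$-ary predicate symbol $P$,
\[
P^\mathfrak{A}(c_1,\dots,c_n)=\begin{cases} v(Pc_1\dots c_n), & \text{if } Pc_1\dots c_n\in\Gamma_0;\\[1mm] \laT, & \text{otherwise.}\end{cases}
\]
This is well-defined by the observation above, and since every constant is interpreted by itself we have $(C\cup\bar C)^\mathfrak{A}=U$; hence, applying Definition~\ref{diagram} with $\Theta(\bar C)$ and $C\cup\bar C$ in the roles of $\Theta$ and $C$, we get $\Theta(\bar C)_U=\Theta(\bar C)$ and $\mathfrak{A}_U=\mathfrak{A}$, so $\mathfrak{A}$ is exactly of the kind required by Lemma~\ref{extenv}.

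Then I would invoke Lemma~\ref{extenv} with this $\Gamma$ and this $\mathfrak{A}$, obtaining a \tms-valuation $\bar v:Sen(\Theta(\bar C))\to\{\laT,\lat,\laf,\laF\}$ over $\mathfrak{A}$ extending $v$. The only point to verify here is that clause~1 of Definition~\ref{Tm-sem} is consistent with the choice of $\mathfrak{A}$: for an atomic sentence $Pc_1\dots c_n$ the construction in the proof of Lemma~\ref{extenv} sets $\bar v(Pc_1\dots c_n)=v(Pc_1\dots c_n)$ when $Pc_1\dots c_n\in\Gamma_0$ and $\bar v(Pc_1\dots c_n)=\laT$ otherwise, and by the definition of $P^\mathfrak{A}$ this equals $P^\mathfrak{A}(c_1^\mathfrak{A},\dots,c_n^\mathfrak{A})$ in both cases. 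Finally, since $\bar v$ extends $v$, every $\laL\lab\varphi\in\Gamma$ satisfies $\varphi\in\Gamma_0$ and $\bar v(\varphi)=v(\varphi)=\laL$, that is, $\laL\lab\varphi$ is true in $\bar v$, which is what we wanted.

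The substantive work has in fact already been carried out in Lemma~\ref{extenv}, namely the inductive construction of $\bar v$ satisfying the non-deterministic clauses for $\tilde{\neg}$, $\tilde{\Box}_1$, $\tilde{\to}$ and $\tilde{\forall}_4^d$ while remaining coherent on all variants of each sentence. So the present theorem is essentially a corollary, and the only steps that need genuine attention are the well-definedness of $P^\mathfrak{A}$ (secured by clause~1 of the Hintikka conditions) and the bookkeeping ensuring $\mathfrak{A}_U=\mathfrak{A}$, so that no auxiliary constants have to be adjoined and $\bar v$ can be read off over $\mathfrak{A}$ itself.
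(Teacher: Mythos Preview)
Your proof is correct and follows essentially the same route as the paper: define the canonical structure $\mathfrak{A}$ with domain $C\cup\bar C$, interpret each constant by itself and each predicate by the label it receives in $\Gamma$ (with an arbitrary default otherwise), observe that $(C\cup\bar C)^\mathfrak{A}=U$ so that $\mathfrak{A}_U=\mathfrak{A}$, and then invoke Lemma~\ref{extenv}. Your extra remark verifying that clause~1 of Definition~\ref{Tm-sem} is respected by the atomic case of the construction of $\bar v$ is a welcome point of rigor that the paper leaves implicit.
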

\begin{proof} Let $U=C \cup \bar C$ and let $\mathfrak{A}= \langle U, \cdot^{\mathfrak{A}} \rangle$ be a four-valued modal structure  over $\Theta$ defined as follows: 	
	\begin{itemize}
		\item[-] For each $n$-ary predicate $P$, $P^\mathfrak{A}:U^n \to \{\laT,\lat,\laf,\laF\}$ is defined as follows:
		$P^\mathfrak{A}(a_1,\ldots,a_n)=\laL$ if $\laL\lab P a_1\ldots a_n \in \Gamma$, and it gets an arbitrary value in $\{\laT,\lat,\laf,\laF\}$ otherwise;
		\item[-] For each individual constant $c \in C \cup \bar C$, $c^\mathfrak{A} = c$.
	\end{itemize}
	It is worth observing that $P^\mathfrak{A}$ is well-defined, by item~1 of Definition~\ref{hintikka}. Since $(C \cup \bar C)^\mathfrak{A}=U$ then, by Definition~\ref{diagram}, $\Theta(\bar C)_U=\Theta(\bar C)$ and  $\mathfrak{A}_U= \mathfrak{A}$. Let $\Gamma_0 = \{ \varphi \in Sen(\Theta(\bar C)) \ : \ \laL\lab\varphi \in \Gamma\}$, and let $v:\Gamma_0 \to \{\laT,\lat,\laf,\laF\}$ be a function defined as follows: $v(\varphi)=\laL$ iff  $\laL\lab\varphi \in \Gamma$. By Lemma~\ref{extenv}, $v$ is well-deﬁned, and there exists a \tms-valuation $\bar v$ over $\mathfrak{A}$ extending $v$. That is, $\bar v$ is a \tms-valuation over $\mathfrak{A}$ such that $\laL\lab\varphi$ is true in $\bar v$  for every $\laL\lab\varphi \in \Gamma$.
\end{proof}

\noindent  Finally, to prove the completeness of the tableau system for \tms, the notion of {\em systematic tableaux} proposed by Smullyan for his tableau system for classical first-order logic (see~\cite[p.~59]{smul:1968}) must be adapted to the specific rules of this logic. For technical reasons that will be clear below, besides signed formulas $\laL\lab\varphi$, we will consider {\em marked} signed formulas, which are labeled signed formulas of the form $\lat\lab\forall x\psi\lab[c]$ or $\laf\lab\forall x\psi\lab[c]$ such that $\psi$ is a formula in which $x$ is the only variable (possibly) occurring free, and $c$ is a constant of the signature. 

\begin{defi} \label{systematic}
	Let $\laL'\lab\gamma$ be a signed formula over a signature $\Theta$. Let $C=\{c_1,\ldots,c_n\}$ be the (possibly empty) set of constants occurring in $\gamma$, and consider an infinite denumerable set $\bar C = \{c_{n+1}, c_{n+2}, \ldots\}$ of new constants (observe that $C \cup \bar C$ is considered to be linearly ordered). The procedure for defining a {\em systematic tableau} $\F$ in \tms\ for $\laL'\lab\gamma$, which is a (possibly infinite) tree of signed formulas or marked signed formulas  over $\Theta(\bar C)$ of degree~5,\footnote{Meaning that each node has, at most, 5 child nodes. This is an obvious consequence of the tableau rules defined above.} is defined as follows: \\
	(1) Put the signed formula $\laL'\lab\gamma$ at the beginning of the tree, forming an initial branch $\theta$ of $\F$, thus completing stage 1 of the procedure with a tableau $\F_1$.\\
	(2) Assume that a tableau $\F_n$ (that is, a  tree of degree~5) was already completed at the $n$th stage of the procedure. If $\F_n$ is closed, the procedure stops. If  $\F_n$ is not closed, but every non-atomic signed formula was used on every open branch, the procedure also stops.\footnote{Observe that the procedure cannot stop at this point if a reusable signed formula or a marked signed formula appears in an open branch of $\F_n$, as such an expression can still be used.} Otherwise,  pick a  non-atomic signed formula $\laL\lab\varphi$ or a marked signed formula $\lat\lab\forall x\psi\lab[c]$ or $\laf\lab\forall x\psi\lab[c]$  of minimal level (which means that such expression is located as high as possible) in  the tree $\F_n$  which has not yet been used; having more than one of such unused expressions at the same minimal level of the tree, pick  the leftmost one. Then, extend {\bf every} open branch  $\theta$ containing such occurrence of $\laL\lab\varphi$, $\lat\lab\forall x\psi\lab[c]$ or $\laf\lab\forall x\psi\lab[c]$, as follows (clauses 1-7 refer to $\laL\lab\varphi$, 8 refers to $\lat\lab\forall x\psi\lab[c]$ and~9 refers to $\laf\lab\forall x\psi\lab[c]$):
	\begin{enumerate}
		\item If either $\varphi$ is $\Box\delta$ and $\laL \in \{\laT,\lat\}$, or $\varphi$ is $\neg\delta$,  extend $\theta$ to $(\theta, \, \laL''\lab\delta')$, where $\laL''\lab\delta'$ is the consequence of the respective tableau rule.
		\item If $\varphi$ is $\delta \to \psi$ and $\laL\neq \laF$ extend $\theta$ by 5, 4 or 3 branches (if $\laL$ is $\laT$, $\lat$ or $\laf$, respectively) with the corresponding signed formulas on each branch, according to the specific tableau rule.
		\item If $\laL\lab\varphi$ is $\laF\lab(\delta \to \psi)$, extend $\theta$ to $(\theta, \, \laT\lab\delta, \, \laF\lab\psi)$.
		\item If $\varphi$ is $\forall x\delta$ and $\laL=\laF$,  extend $\theta$ to $(\theta, \, \laF\lab\delta(c))$, where $c$ is the first constant that has not yet appeared on $\theta$.
		\item If $\varphi$ is $\forall x\delta$ and $\laL=\laT$,  extend $\theta$ to $(\theta, \, \laT\lab\delta(c), \, \laT\lab\forall x\delta)$, where $c$ is the first constant such that $\laT\lab\delta(c)$ does not occur on $\theta$.
		\item If $\varphi$ is $\forall x\delta$ and $\laL=\lat$,  extend $\theta$ to $(\theta, \, \lat\lab\delta(c), \, \lat\lab\forall x\delta\lab[c])$, where  $c$ is the first constant that has not yet appeared in $\theta$.
		\item If $\varphi$ is $\forall x\delta$ and $\laL=\laf$,  extend $\theta$ to $(\theta, \, \laf\lab\delta(c), \, \laf\lab\forall x\delta\lab[c])$, where  $c$ is the first constant that has not yet appeared in $\theta$.
		\item If the first non-atomic unused expression is  $\lat\lab\forall x\psi\lab[c]$, extend $\theta$ to the two branches $(\theta, \,  \lat\lab\psi(c'), \, \lat\lab\forall x\psi\lab[c])$ and $(\theta,  \, \laT\lab\psi(c''), \, \lat\lab\forall x\psi\lab[c])$, where $c'$ is the first constant different from $c$ such that $\lat\lab\psi(c')$ does not occur on $\theta$, and $c''$ is the first constant different from $c$ such that  $\laT\lab\psi(c'')$ does not occur on $\theta$.
		\item If the first non-atomic unused  expression is $\laf\lab\forall x\psi\lab[c]$, extend $\theta$ to the three branches  $(\theta, \, \laf\lab\psi(c'), \, \laf\lab\forall x\psi\lab[c])$, $(\theta,  \, \laT\lab\psi(c''), \, \laf\lab\forall x\psi\lab[c])$, and $(\theta,  \, \lat\lab\psi(c'''), \, \laf\lab\forall x\psi\lab[c])$, where $c'$ is the first constant different from $c$ such that $\lat\lab\psi(c')$ does not occur on $\theta$, $c''$ is the first constant different from $c$ such that  $\laT\lab\psi(c'')$ does not occur on $\theta$, and $c'''$ is the first constant different from $c$ such that  $\lat\lab\psi(c''')$ does not occur on $\theta$.
	\end{enumerate} 
	After performing step (2), the corresponding expression of the tree chosen in each of these steps (namely, $\laL\lab\varphi$, $\lat\lab\forall x\psi\lab[c]$  or $\laf\lab\forall x\psi\lab[c]$) is declared to be used, thus concluding  the stage $n+1$ of the procedure.
\end{defi} 

\noindent As in the case of Smullyan's systematic tableau procedure for classical first-order logic, the purpose of repeating an occurrence of a signed formula  $\laT\lab\forall x\psi$ after an instance $\laT\lab\psi(c)$ is to allow their reuse with another constant (given that the original occurrence of $\laT\lab\forall x\psi$ is declared to be used). This procedure guarantees that any instance  $\laT\lab\psi(c)$ will appear in an open branch of a finished systematic tableau in \tms. The same technique is applied to guarantee that the signed formula  $\lat\lab\forall x\psi$ will be reused. However, in this case an initial instance $\lat\lab\psi(c)$ with a new constant $c$ is added, together with the expression $\lat\lab\forall x\psi\lab[c]$. This expression contains the signed formula $\lat\lab\forall x\psi$ to be reused, plus a mark $[c]$ indicating that the rule was used for the first time with the fresh constant $c$. When this rule is reused after this stage (as indicated on item~8 of step~(2)), the tableau splits into two branches: the left-side branch contains an instance $\lat\lab\psi(c')$ which does not occur on $\theta$, although $c' \neq c$ is not necessarily new in the branch, while the right-side branch contains an instance $\laT\lab\psi(c'')$ which does not occur on $\theta$, although $c'' \neq c$ is not necessarily new in the branch. The constant $c$ in the mark informs that $\psi(x)$ cannot be instantiated once again with $c$. Below each of these two formulas, the expression $\lat\lab\forall x\psi\lab[c]$ is repeated on each of the two new branches, allowing new rule reuse (given that the original occurrence of $\lat\lab\forall x\psi\lab[c]$ is declared to be used). This procedure guarantees that, for any constant $c'$,  $\laL\lab\psi(c')$ will appear in an open branch of a finished systematic tableau in \tms\ with a unique label $\laL \in \{\laT,\lat\}$. A similar technique is employed for rule $(\laf\forall)$ ensuring that,  for any constant $c'$,  $\laL\lab\psi(c')$ will appear in an open branch of a finished systematic tableau in \tms\ with a unique label $\laL \neq \laF$.

\begin{defi}
	A {\em finished} systematic tableau in \tms\  is a systematic tableau in \tms\  which is either infinite (hence it contains at least an infinite branch, by K\"onig's lemma), or it is finite but it cannot be extended further employing the systematic procedure described in Definition~\ref{systematic} (that is, on every open branch every non-atomic signed formula was used).
\end{defi}

\begin{obs} \label{rem-finished} It is worth noting that if a finished systematic tableau $\mathcal{F}$ in \tms\ is infinite then the procedure described in Definition~\ref{systematic} for defining it cannot stop in any finite step $k$. Indeed, the tableau $\mathcal{F}_k$ obtained in step $k$ of the definition of $\mathcal{F}$ is finite, since it is a tree of degree~5 and $k$ is finite. Observe that if $\theta$ is a  finite open branch of a finished systematic tableau $\mathcal{F}$ in \tms, then no reusable signed formula or marked signed formula occurs in $\theta$. Otherwise, such an expression would give origin, in a later step of the construction of $\F$, to an unused occurrence in $\theta$  of a reusable signed formula or of a marked signed formula and then $\theta$ could be extended {\em ad infinitum} by the systematic procedure given in Definition~\ref{systematic}, which contradicts the fact that $\theta$ is finite. From the previous considerations, it is clear that every  occurrence of a (reusable or not) non-atomic signed formula or of a marked signed formula in an open branch of a (infinite or not) finished $\mathcal{F}$ was used at some point of the construction of  $\mathcal{F}$.
\end{obs}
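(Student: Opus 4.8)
The plan is to establish, in turn, the three observations that make up this remark, all by unwinding the definition of the systematic procedure (Definition~\ref{systematic}) together with the fairness built into its selection rule.

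First I would verify that every finite-stage tableau $\F_k$ is finite. This follows by induction on $k$: $\F_1$ is a single node, and each stage extends finitely many open branches by at most finitely many (indeed at most five) child nodes, so $\F_k$ is a finite tree of degree~5. Consequently, if the procedure halted at some finite step $k$, we would have $\F=\F_k$ finite; hence an infinite finished $\F$ forces the procedure to pass through infinitely many stages.

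The core of the argument is the second observation. Suppose, for contradiction, that a finite open branch $\theta$ of a finished $\F$ contained an occurrence of a reusable signed formula (of the form $\laT\lab\forall x\varphi$) or of a marked signed formula ($\lat\lab\forall x\psi\lab[c]$ or $\laf\lab\forall x\psi\lab[c]$). The key point is that, whenever such an expression is applied on an open branch, the procedure re-deposits a fresh copy of a reusable or marked expression below the resulting nodes: clause~5 repeats $\laT\lab\forall x\delta$, clauses~6 and~7 introduce the initial marked formulas $\lat\lab\forall x\delta\lab[c]$ and $\laf\lab\forall x\delta\lab[c]$, and clauses~8 and~9 repeat the corresponding marked formula on each new branch upon reuse. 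Moreover, since $C \cup \bar C$ is infinite while the branch built so far is finite, a suitable fresh constant is always available, so each re-deposited copy is genuinely applicable and yields a new instance. Thus, as long as $\theta$ stays open, there is perpetually an unused occurrence of a reusable or marked expression on $\theta$ at some finite level. Because the selection rule eventually picks every unused non-atomic expression of minimal level, such an expression would be applied infinitely often along $\theta$, extending it \emph{ad infinitum} and contradicting its finiteness. Hence a finite open branch carries no reusable or marked expression, so all its non-atomic expressions are of the ``one-shot'' kind.

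Finally I would deduce that every non-atomic signed formula or marked signed formula occurring in any open branch of a finished $\F$ has been used. For a finite open branch, the second observation leaves only one-shot expressions; since the branch is finished and cannot be extended further, each such expression must already have been used, for otherwise the selection rule would pick it and extend the branch. For an infinite open branch $\theta$ of an infinite $\F$, I would invoke fairness: each occurrence sits at a finite level, only finitely many expressions lie at or above that level, and the procedure exhausts them in order of minimal level and leftmost position; hence every occurrence is eventually selected and thereby used. The delicate point throughout --- and the step I expect to be the main obstacle --- is the precise bookkeeping of the \textbf{used}/\textbf{unused} status under the reuse mechanism: one must confirm that the re-deposited copies of reusable and marked expressions are genuinely unused and remain applicable (thanks to the endless supply of fresh constants), which is exactly what drives the perpetual applicability underlying the contradiction in the second observation.
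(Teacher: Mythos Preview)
Your proposal is correct and follows essentially the same line as the paper: the remark itself carries its justification inline (no separate proof environment), and your three-part expansion---finiteness of each $\F_k$, the contradiction from a reusable or marked expression on a finite open branch via the re-deposit mechanism, and the fairness/selection argument ensuring every occurrence is eventually used---matches the paper's reasoning point for point. You simply make explicit what the paper leaves terse, in particular the fairness argument for infinite branches and the availability of fresh constants.
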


\begin{prop} \label{open-hintikka} Let $\laL_0\lab\varphi_0$  be a signed formula over $\Theta$, and let $\bar C$ be as in Definition~\ref{systematic}.
	Let $\theta$ be an open branch of a finished systematic tableau $\mathcal{F}$ in \tms\ for $\laL\lab\varphi$, and let $\Gamma_0$ be the set of signed formulas occurring in  $\theta$  (so, marked signed formulas as $\lat\lab\forall x\psi\lab[c]$  or $\laf\lab\forall x\psi\lab[c]$ occurring in $\theta$ are not included in $\Gamma_0$).
	Then, $\Gamma_0$ is a Hintikka set for \tms\ in  the universe $C \cup \bar C$.
\end{prop}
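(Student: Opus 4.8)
The plan is to verify each of the twelve defining conditions of Definition~\ref{hintikka} directly, exploiting the fact that $\theta$ is an \emph{open} branch of a \emph{finished} systematic tableau, together with Remark~\ref{rem-finished}, which guarantees that every non-atomic signed formula and every marked signed formula occurring in $\theta$ was actually used at some stage of the construction of $\mathcal{F}$. The general template for each case is the same: given a signed formula $\laL\lab\varphi$ in $\Gamma_0$ with $\varphi$ non-atomic, it was used at some stage, so the corresponding clause of step~(2) of Definition~\ref{systematic} was applied and $\theta$ was extended by the output of the relevant tableau rule; since $\theta$ is open, it survives as one of the branches produced, hence it contains the signed formulas of at least one of the alternatives listed in the rule --- which is exactly what the matching clause of Definition~\ref{hintikka} demands.

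\textbf{Key steps.} First I would handle condition~1 (the consistency/coherence condition): this is immediate from the fact that $\theta$ is \emph{open}, i.e.\ not closed in the sense of Definition~\ref{closed}, so it cannot contain $\laL\lab\varphi$ and $\laL'\lab\varphi'$ with $\varphi\sim\varphi'$ and $\laL\neq\laL'$. Next, conditions~2--8, concerning $\neg$, $\Box$ and $\to$: each follows by the template above applied to clauses~1--3 of step~(2) and the propositional tableau rules, noting in the $\Box$ cases that $\tilde{\neg}$ and (for $\laT,\lat\lab\Box\varphi$) the relevant entries of $\tilde{\Box}_1$ are singletons, so ``belongs for a unique $\laL$'' is automatic. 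Then conditions~9 and~12, for $\laT\lab\forall x\varphi$ and $\laF\lab\forall x\varphi$: for $\laF$ this uses clause~4 of step~(2) once, producing $\laF\lab\varphi(c)$ for the fresh constant $c$; for $\laT$ one must argue that the reusable rule $(\laT\forall)$, via clause~5 and the repetition mechanism, eventually instantiates $\forall x\varphi$ with \emph{every} constant of $C\cup\bar C$ on $\theta$ --- here I would invoke Remark~\ref{rem-finished} to rule out that $\theta$ is finite while still containing an unused reusable formula, so the systematic enumeration of constants forces every $\laT\lab\varphi(c)$ into $\theta$. Finally, conditions~10 and~11, for $\lat\lab\forall x\varphi$ and $\laf\lab\forall x\varphi$: these are the delicate ones. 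Clauses~6--7 of step~(2) first introduce $\lat\lab\varphi(c)$ (resp.\ $\laf\lab\varphi(c)$) for a fresh $c$ and the marked formula $\lat\lab\forall x\varphi\lab[c]$ (resp.\ $\laf\lab\forall x\varphi\lab[c]$); clauses~8--9 then, using the marked formula repeatedly, force that for every other constant $c'$ one of the allowed signed formulas $\laT\lab\varphi(c')$ or $\lat\lab\varphi(c')$ (resp.\ $\laT$, $\lat$ or $\laf$) appears on $\theta$. The clause ``$\laF\lab\varphi(c')$ does not belong to $\Gamma_0$'' in condition~11, and the uniqueness of the label, then both follow from openness of $\theta$ together with the shape of the rules, which never introduce $\laF\lab\varphi(c')$ from a $\laf\lab\forall x\varphi$ ancestor.

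\textbf{Main obstacle.} The hard part will be the quantifier conditions~10 and~11, and specifically the bookkeeping showing that the marked-formula repetition mechanism of clauses~6--9 really achieves ``for every $c'\in(C\cup\bar C)\setminus\{c\}$'' the required instantiation, \emph{while} keeping the witnessing constant $c$ fixed and avoiding re-instantiation with $c$ itself. I would argue this by the standard fairness property of systematic tableaux: a marked signed formula, once present on an open branch, is reusable and hence (by Remark~\ref{rem-finished}) gets selected infinitely often unless the branch closes or terminates; combined with the ``first constant not yet appearing'' choice rule, this yields a surjection onto $(C\cup\bar C)\setminus\{c\}$ of the instantiated constants. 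The remaining conditions are then routine case analysis against the tableau rules, and I would state them briefly rather than write each out in full.
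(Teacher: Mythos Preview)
Your proposal is correct and follows essentially the same route as the paper's own proof: openness of $\theta$ gives clause~1 via Definition~\ref{closed}, Remark~\ref{rem-finished} guarantees that every non-atomic (marked) signed formula on $\theta$ was eventually used, and then clauses~2--12 are read off from the corresponding items of step~(2) in Definition~\ref{systematic} together with the tableau rules, with the reusable cases ($\laT\forall$, $\lat\forall$, $\laf\forall$) handled by the repetition/fairness mechanism exactly as you describe. The only cosmetic point is that the ``unique $\laL$'' in clause~4 and the ``$\laF\lab\varphi(c')\notin\Gamma_0$'' in clause~11 are secured by clause~1 (openness) rather than by the shape of the multioperators, but you essentially say this already.
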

\begin{proof}
	Since $\theta$ is open then, by Definition~\ref{closed}: if $\laL\lab\varphi$ and $\laL'\lab\varphi'$ belong to $\Gamma_0$ such that $\varphi$ and $\varphi'$ are variant, then $\laL=\laL'$. In particular, if $\laL\lab\varphi$ and $\laL'\lab\varphi$ belong to $\Gamma_0$ then $\laL=\laL'$. This shows that $\Gamma_0$ satisfies clause~1 of Definition~\ref{hintikka}. If $\laL\lab\varphi \in \Gamma_0$ for $\varphi$ of the form $\neg\psi$, $\Box\psi$ or $\gamma \to \psi$ then, by the tableau rules for \tm\ (which are included in the tableau system for \tms), and taking into consideration that $\mathcal{F}$ is a finished systematic tableau, $\laL\lab\varphi$ was used at some stage of the procedure for defining $\theta$ (as observed in Remark~\ref{rem-finished}), hence  it is immediate to see that clauses~2 to~8  of Definition~\ref{hintikka} are fullfilled. If $\laT\lab\forall x\varphi \in \Gamma_0$ then, since $\mathcal{F}$ is a finished systematic tableau and $\theta$ is open, this signed formula was used with all the available constants, as discussed in Remark~\ref{rem-finished}. That is, $\laT\lab\varphi(c) \in \Gamma_0$  for every $c \in C \cup \bar C$, showing that $\Gamma_0$ satisfies clause~9 of Definition~\ref{hintikka}. 
	If $\lat\lab\forall x\varphi \in \Gamma_0$ then, given that $\mathcal{F}$ is a finished systematic tableau and $\theta$ is an open branch, $\theta$ contains $\lat\lab\varphi(c)$ for some constant $c$, and the marked signed formula $\lat\lab\forall x\varphi\lab[c]$ also occurs in $\theta$ (see Remark~\ref{rem-finished}). Since the latter was used with all available constants other than $c$ we have that, for every $c' \in (C \cup \bar C)\setminus\{c\}$: either $\laT\lab \varphi(c')$ belongs to $\Gamma_0$ or $\lat\lab \varphi(c')$ belongs to $\Gamma_0$. This shows that $\Gamma_0$ satisfies clause~10 of Definition~\ref{hintikka}.
	Now, if $\laf\lab\forall x\varphi \in \Gamma_0$ then, given that $\mathcal{F}$ is a finished systematic tableau and $\theta$ is an open branch, $\laf\lab\forall x\varphi$ was used (see Remark~\ref{rem-finished}) and so  $\laf\lab\varphi(c) \in \Gamma_0$  for some $c \in C \cup \bar C$,  plus the marked signed formula $\lat\lab\forall x\varphi\lab[c]$. Given that the latter was used with all available constants other than $c$ it follows that, for every $c' \in (C \cup \bar C)\setminus\{c\}$: either $\laT\lab \varphi(c')$ belongs to $\Gamma_0$, or $\lat\lab \varphi(c')$ belongs to $\Gamma_0$, or $\laf\lab \varphi(c')$ belongs to $\Gamma_0$, and $\laF\lab \varphi(c')$ does not belong to $\Gamma_0$. This proves that $\Gamma_0$ satisfies clause~11 of Definition~\ref{hintikka}. Finally, if  $\laF\lab\forall x\varphi \in \Gamma_0$ then, since $\mathcal{F}$ is a finished systematic tableau and $\theta$ is an open branch, $\laF\lab\forall x\varphi$ was used, as observed in Remark~\ref{rem-finished}, and so  $\laF\lab\varphi(c) \in \Gamma_0$  for some $c \in C \cup \bar C$. From this, $\Gamma_0$ also satisfies clause~12 of Definition~\ref{hintikka}. That is, $\Gamma_0$ is a Hintikka set for \tm\ in the universe $C \cup \bar C$. 
\end{proof}

\begin{coro} Let $\laL\lab\varphi$  be a signed formula over $\Theta$, and let $\bar C$ be as in Definition~\ref{systematic}. Let $\theta$ be an open branch of a finished systematic tableau in \tms\ for $\laL\lab\varphi$, and let $\Gamma_0$ be the set of signed formulas occurring in  $\theta$   (so, marked signed formulas as $\lat\lab\forall x\psi\lab[c]$  or $\laf\lab\forall x\psi\lab[c]$ are not included in $\Gamma_0$). Then, there  is a structure $\mathfrak{A}$ for \tms\ over $\Theta(\bar C)$ and a valuation $\bar v$ over it such that $\laL\lab\gamma$ is true in $\bar v$  for every $\laL\lab\gamma \in \Gamma_0$.
\end{coro}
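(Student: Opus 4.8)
The plan is to observe that this Corollary is an immediate composition of the two preceding results, so the proof amounts to checking that their hypotheses line up and then quoting them. First I would invoke Proposition~\ref{open-hintikka}: since $\theta$ is an open branch of a finished systematic tableau $\mathcal{F}$ in \tms\ for $\laL\lab\varphi$, the set $\Gamma_0$ of signed formulas occurring in $\theta$ (with marked signed formulas such as $\lat\lab\forall x\psi\lab[c]$ or $\laf\lab\forall x\psi\lab[c]$ excluded) is a Hintikka set for \tms\ in the universe $C \cup \bar C$. Here one only needs to note that the tableau is, by Definition~\ref{systematic}, built over $\Theta(\bar C)$ with $\bar C$ as in that definition, which is exactly the setting required by Proposition~\ref{open-hintikka}.

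Next I would apply Theorem~\ref{hintikka-lemma} (Hintikka's Lemma for \tms) to the Hintikka set $\Gamma_0$. This yields a four-valued modal structure $\mathfrak{A}= \langle U, \cdot^\mathfrak{A} \rangle$ over $\Theta(\bar C)$ with $(C \cup \bar C)^\mathfrak{A}=U$, together with a \tms-valuation $\bar v$ over $\mathfrak{A}$ such that $\laL\lab\varphi$ is true in $\bar v$ for every $\laL\lab\varphi \in \Gamma_0$. By Definition~\ref{valsat}, ``$\laL\lab\gamma$ true in $\bar v$'' means exactly $\bar v(\gamma)=\laL$, so this is precisely the asserted conclusion: for every $\laL\lab\gamma \in \Gamma_0$ we have $\bar v(\gamma)=\laL$, and $\mathfrak{A}$ is the required structure for \tms\ over $\Theta(\bar C)$.

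There is essentially no genuine obstacle here beyond bookkeeping, because the substantive work has already been done upstream: Proposition~\ref{open-hintikka} verified all the clauses of Definition~\ref{hintikka} from the systematic construction (using Remark~\ref{rem-finished} to guarantee that every non-atomic signed formula and every marked signed formula on an open branch of a finished tableau was eventually used, in particular so that reusable quantifier rules exhaust all constants of $C \cup \bar C$), and Lemma~\ref{extenv} / Theorem~\ref{hintikka-lemma} extended the partial assignment $v(\varphi)=\laL$ induced by $\Gamma$ to a full \tms-valuation by induction on complexity $\lac$, handling variants $\varphi \sim \varphi'$ coherently. The only point worth a sentence of care is that $\mathfrak{A}$ is taken over $\Theta(\bar C)$ and not over $\Theta$, and that the identification $\mathfrak{A}_U=\mathfrak{A}$ (legitimate precisely because $(C\cup\bar C)^\mathfrak{A}=U$, by Definition~\ref{diagram}) is what makes $\bar v$ a valuation on $Sen(\Theta(\bar C))$ in the sense of Definition~\ref{Tm-sem}. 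With these identifications recorded, the proof is the one-line chaining ``$\Gamma_0$ is a Hintikka set (Proposition~\ref{open-hintikka}), hence satisfiable by some $\mathfrak{A}$ and $\bar v$ (Theorem~\ref{hintikka-lemma})''.
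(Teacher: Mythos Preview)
Your proposal is correct and follows exactly the paper's own proof, which simply states that the result is an immediate consequence of Proposition~\ref{open-hintikka} and Theorem~\ref{hintikka-lemma}. The extra bookkeeping you spell out (about $\Theta(\bar C)$, $(C\cup\bar C)^\mathfrak{A}=U$, and the identification $\mathfrak{A}_U=\mathfrak{A}$) is accurate and merely makes explicit what the paper leaves implicit.
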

\begin{proof}
	It is an immediate consequence of Proposition~\ref{open-hintikka} and Theorem~\ref{hintikka-lemma}. 
\end{proof}

\noindent Observe that, as a consequence of the definitions, if $\varphi$ is provable by tableaux in \tms\ then the systematic tableau for $\laL\lab\varphi$ must close after a finite number of steps, for $\laL \in \{\laF, \laf\}$ (and the converse is also true, of course). This produces the following:

\begin{teo} [Completeness of tableaux for \tms] Let $\Gamma \cup\{\varphi\}$ be a finite set of closed formulas over $\Theta$. If $\Gamma \models_\tms \varphi$ \ then \ $\Gamma \models_{\mathcal{T}(\tms)} \varphi$.
\end{teo}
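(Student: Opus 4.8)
The plan is to prove the statement by contraposition, after a routine reduction to the case $\Gamma=\emptyset$ via the deduction metatheorem, and then to extract a countermodel from an open branch of a finished systematic tableau by invoking Hintikka's Lemma. Concretely: since $\models_\tms$ satisfies the deduction metatheorem for sentences (as already used in the proof of Theorem~\ref{sound-tableaux}; it follows from Theorems~\ref{DMQ} and~\ref{sound-compl-lms}, because the $\gamma_i$ are closed formulas over $\Theta$), one has $\Gamma \models_\tms \varphi$ iff $\models_\tms \psi$, where $\psi$ denotes the closed formula $(\gamma_1 \to (\gamma_2 \to \cdots \to (\gamma_n \to \varphi)\cdots))$; and by definition $\Gamma \models_{\mathcal{T}(\tms)} \varphi$ holds iff $\models_{\mathcal{T}(\tms)}\psi$. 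Thus it suffices to show, for an arbitrary closed formula $\psi$ over $\Theta$, that $\models_\tms\psi$ implies $\models_{\mathcal{T}(\tms)}\psi$.

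Arguing contrapositively, assume $\not\models_{\mathcal{T}(\tms)}\psi$. By the definition of tableau provability there is some $\laL \in \{\laF,\laf\}$ for which no closed tableau in \tms\ starts from $\laL\lab\psi$. Following Definition~\ref{systematic}, build a finished systematic tableau $\mathcal{F}$ in \tms\ for $\laL\lab\psi$. Since no closed tableau starts from $\laL\lab\psi$, the particular tableau $\mathcal{F}$ is not closed, so it has at least one open branch $\theta$: if $\mathcal{F}$ is finite this is the definition of "not closed", and if $\mathcal{F}$ is infinite it follows from K\"onig's lemma since closed branches are never extended by the systematic procedure and hence are finite. Let $\Gamma_0$ be the set of (unmarked) signed formulas occurring in $\theta$. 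By Proposition~\ref{open-hintikka}, $\Gamma_0$ is a Hintikka set for \tms\ in the universe $C \cup \bar C$.

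Now apply Hintikka's Lemma (Theorem~\ref{hintikka-lemma}): there exist a four-valued modal structure $\mathfrak{A}= \langle U, \cdot^\mathfrak{A} \rangle$ over $\Theta(\bar C)$ with $(C \cup \bar C)^\mathfrak{A}=U$ and a \tms-valuation $\bar v$ over $\mathfrak{A}$ such that $\laL'\lab\gamma$ is true in $\bar v$ for every $\laL'\lab\gamma \in \Gamma_0$. Since $\laL\lab\psi$ is the root of $\mathcal{F}$, it lies on $\theta$, hence $\laL\lab\psi \in \Gamma_0$ and so $\bar v(\psi)=\laL \in \{\laF,\laf\}$, i.e. $\bar v(\psi)\notin\{\laT,\lat\}$. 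Because $\psi$ is a sentence over $\Theta$ and $\mathfrak{A}$ is an expansion of its $\Theta$-reduct of exactly the form considered in Definition~\ref{diagram} (with $\bar C$ playing the role of $\bar U$, as in Remark~\ref{obs-valuations}(1), and using the conservativity observation of Remark~\ref{diag-cons-rem}), the valuation $\bar v$ witnesses, via Definition~\ref{semconsFOL} applied with $\Gamma=\emptyset$, that $\not\models_\tms\psi$. This establishes the contrapositive, and therefore the theorem.

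As for where the difficulty lies: the genuinely delicate ingredients — that the systematic procedure, with its reusable rules and its marked signed formulas $\lat\lab\forall x\psi\lab[c]$ and $\laf\lab\forall x\psi\lab[c]$, really produces a \emph{full} Hintikka set on every open branch of a finished tableau, and that every Hintikka set admits a satisfying \tms-valuation — have already been discharged in Proposition~\ref{open-hintikka}, Lemma~\ref{extenv} and Theorem~\ref{hintikka-lemma}. Hence the present proof is essentially an assembly of those pieces; the only point needing a little care is the transition from "a countermodel over $\Theta(\bar C)$" to "$\not\models_\tms\psi$ over $\Theta$", which is exactly what the remarks on conservativity of the diagram expansion are there to license.
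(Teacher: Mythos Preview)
Your proof is correct and follows essentially the same route as the paper: reduce to $\Gamma=\emptyset$ via the deduction metatheorem, argue by contraposition, and use Proposition~\ref{open-hintikka} together with Hintikka's Lemma (Theorem~\ref{hintikka-lemma}) to extract a countermodel from an open branch of a finished systematic tableau. The only cosmetic differences are that the paper runs the contraposition in the other direction (starting from $\models_\tms\varphi$ and concluding that every systematic tableau closes, then invoking K\"onig's lemma to get finiteness), and that you spell out more carefully the passage from a countermodel over $\Theta(\bar C)$ back to $\not\models_\tms\psi$ over $\Theta$, a step the paper leaves implicit.
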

\begin{proof}
By definition of $\models_{\mathcal{T}(\tms)}$, and since $\models_\tms$ satisfies the deduction metatheorem for sentences, it is enough to prove the result for $\Gamma=\emptyset$. Thus, let $\F$ be a finished systematic tableau in \tms\ for $\laL\lab\varphi$, where $\laL \in \{\laF, \laf\}$. If $\F$ has an open branch $\theta$ then the set $\Gamma_0$ of signed formulas occurring in  $\theta$  is simultaneously satisfiable by a valuation $\bar v$ over a first-order structure $\mathfrak{A}$ for \tms. In particular, $\laL\lab\varphi$ is true in $\bar v$, meaning that $\bar v(\varphi) \in \{\laF, \laf\}$. That is, $\not\models_\tms \varphi$. From this, if $\models_\tms \varphi$ then the systematic tableau for $\laL\lab\varphi$ closes in a finite number of steps, for any $\laL \in \{\laF, \laf\}$. Indeed, since every branch of  $\F$ is closed, every branch of $\F$ is finite. Then, by K\"onig's lemma, $\F$  must be finite. That is, $\varphi$ is provable by tableaux in \tms.
\end{proof}

\subsection{Tableaux for {\bf S4m}, {\sqms}, {\bf S5m} and {\scms} } \label{tableaux-etc}

The rules for {\bf S4m} are the same for \tm,  except for the operator $\Box$. In this case, the rules are as follows:

$$
\begin{array}{llll}
\displaystyle \frac{\laT\lab  \Box \varphi}{\laT\lab  \varphi} & \hspace{6mm} \displaystyle \frac{\lat\lab  \Box \varphi}{\times}  & \hspace{6mm} \displaystyle \frac{\laf\lab \Box \varphi}{\lat\lab  \varphi \mid \laf\lab  \varphi \mid \laF\lab  \varphi} & \hspace{6mm} \displaystyle \frac{\laF\lab  \Box \varphi}{\lat\lab  \varphi \mid \laf\lab  \varphi \mid \laF\lab  \varphi} \\[2mm]
&&\\[2mm]
\end{array}
$$

\noi where the symbol $\times$ in the rule for $\lat\lab  \Box \varphi$ indicates that the branch closes. This rule reflects the fact that a formula of the type $\Box\varphi$ cannot receive the value \lat\ in 
$\mathcal{M}({\bf S4m})$.

If we add the quantified rules for \tms\ to the set of rules for {\bf S4m}, we obtain the tableau rules for {\sqms}.

The rules for {\bf S5m} are also the same for \tm,  except for the operator $\Box$. In this case, the rules are as follows:

$$
\begin{array}{llll}
\displaystyle \frac{\laT\lab  \Box \varphi}{\laT\lab  \varphi} & \hspace{6mm} \displaystyle \frac{\lat\lab  \Box \varphi}{\times}  & \hspace{6mm} \displaystyle \frac{\laf\lab \Box \varphi}{\times} & \hspace{6mm} \displaystyle \frac{\laF\lab  \Box \varphi}{\lat\lab  \varphi \mid \laf\lab  \varphi \mid \laF\lab  \varphi} \\[2mm]
&&\\[2mm]
\end{array}
$$

\noi Once again, the symbol  $\times$ indicates that the branch closes in the corresponding rule. Indeed, in $\mathcal{M}({\bf S5m})$ it is impossible for a formula of the type $\Box\varphi$  to receive the value \lat\ or the value \laf. 

If we add the quantified rules for \tms\ to the set of rules for {\bf S5m}, we obtain the tableau rules for {\scms}.

As observed above, the proof of the following result will be omitted here, but it can be obtained by slight modifications of the one presented for  \tms

\begin{teo} [Soundness and Completeness of tableaux for \sqms\ and \scms] \ \\
Let ${\bf L} \in \{\sqms, \scms\}$ and let $\Gamma \cup\{\varphi\}$ be a finite set of closed formulas over $\Theta$. Then: $\Gamma \models_{\bf L} \varphi$ \ if and only if \ $\Gamma \models_{\mathcal{T}({\bf L})} \varphi$.
\end{teo}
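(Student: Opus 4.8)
The plan is to mirror, step by step, the development carried out for \tms\ in the previous subsection, since the tableau systems for \sqms\ and \scms\ differ from the one for \tms\ only in the four rules for $\Box$. The starting observation is semantic: in $\M({\bf S4m})$ the multioperator $\tilde{\Box}_2$ never takes the value \lat, and in $\M({\bf S5m})$ the multioperator $\tilde{\Box}_3$ takes neither \lat\ nor \laf; this is exactly what the conclusions $\times$ in the rules for $\lat\lab\Box\varphi$ (and, for \scms, also $\laf\lab\Box\varphi$) record, namely an immediate closure. Correspondingly, the notion of closed branch from Definition~\ref{closed} is enlarged: a branch is also declared closed as soon as it contains $\lat\lab\Box\varphi$ (for \sqms\ and \scms) or $\laf\lab\Box\varphi$ (for \scms).

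For soundness I would reuse verbatim the argument preceding Theorem~\ref{sound-tableaux}; the only thing to re-check is the analogue of Remark~\ref{obs-lema}(2) for the new $\Box$-rules. For the rules with conclusion $\times$ there is nothing to check, because no \sqms-valuation (resp.\ \scms-valuation) satisfies $\lat\lab\Box\varphi$ (resp.\ $\lat\lab\Box\varphi$ or $\laf\lab\Box\varphi$): that would force $v(\Box\varphi)$ to lie outside $\tilde{\Box}_2\,v(\varphi)$ (resp.\ $\tilde{\Box}_3\,v(\varphi)$). The remaining $\Box$-rules are read off directly from the tables of $\tilde{\Box}_2$ and $\tilde{\Box}_3$ just as the $\tilde{\Box}_1$-rules were for \tm. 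Hence a completed tableau starting from a satisfiable signed formula cannot be closed, and the deduction metatheorem for sentences reduces $\Gamma\models_{\mathcal{T}({\bf L})}\varphi\Rightarrow\Gamma\models_{\bf L}\varphi$ to the case $\Gamma=\emptyset$, exactly as before.

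For completeness I would transport the whole Hintikka-set machinery (Definition~\ref{hintikka}, Lemma~\ref{extenv}, Theorem~\ref{hintikka-lemma}, Definition~\ref{systematic}, Remark~\ref{rem-finished}, Proposition~\ref{open-hintikka}), modifying only clauses~3 and~4 of Definition~\ref{hintikka}. For \sqms: (3$'$) $\lat\lab\Box\varphi\notin\Gamma$, and $\laT\lab\Box\varphi\in\Gamma$ implies $\laT\lab\varphi\in\Gamma$; (4$'$) if $\laF\lab\Box\varphi$ or $\laf\lab\Box\varphi$ belongs to $\Gamma$ then $\laL\lab\varphi\in\Gamma$ for a unique $\laL\in\{\lat,\laf,\laF\}$. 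For \scms: $\lat\lab\Box\varphi\notin\Gamma$ and $\laf\lab\Box\varphi\notin\Gamma$, $\laT\lab\Box\varphi\in\Gamma$ implies $\laT\lab\varphi\in\Gamma$, and $\laF\lab\Box\varphi\in\Gamma$ implies $\laL\lab\varphi\in\Gamma$ for a unique $\laL\in\{\lat,\laf,\laF\}$. With these clauses the induction in Lemma~\ref{extenv} goes through after replacing $\tilde{\Box}_1$ by $\tilde{\Box}_2$ (resp.\ $\tilde{\Box}_3$) in the case $\varphi=\Box\psi$: the clauses were tailored so that $\Box\psi\in\Gamma_0$ forces $v(\Box\psi)\in\tilde{\Box}_2\,v(\psi)$ (resp.\ $\in\tilde{\Box}_3\,v(\psi)$). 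For instance, for \sqms, $v(\Box\psi)=\laT$ forces $v(\psi)=\laT$ with $\tilde{\Box}_2(\laT)=\{\laT\}$, while $v(\Box\psi)\in\{\laf,\laF\}$ forces $v(\psi)\in\{\lat,\laf,\laF\}$, on each of which $\tilde{\Box}_2$ equals $\{\laf,\laF\}$; the case of \scms\ is entirely analogous with $\tilde{\Box}_3$, whose only possible outputs are \laT\ and \laF. Theorem~\ref{hintikka-lemma} is then unchanged; the systematic tableau of Definition~\ref{systematic} needs only the obvious tweak in item~1 (when $\varphi=\Box\delta$, perform the extension step only for the labels for which the $\Box$-rule is not a closure, and declare the branch closed otherwise); and Proposition~\ref{open-hintikka} is re-proved by invoking (3$'$)--(4$'$) in place of clauses~3--4 and noting that an open branch never contains a signed formula triggering a $\Box$-closure. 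The completeness theorem follows as for \tms: an open branch of a finished systematic tableau for $\laL\lab\varphi$, $\laL\in\{\laF,\laf\}$, yields a Hintikka set, hence a \sqms- (resp.\ \scms-) model with $v(\varphi)\in\{\laF,\laf\}$, so $\not\models_{\bf L}\varphi$.

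The only genuine obstacle is bookkeeping: pinning down clauses (3$'$)--(4$'$) (and their \scms\ variants) so that they are at once forced on every open branch by the new $\Box$-rules and strong enough to keep the $\varphi=\Box\psi$ case of Lemma~\ref{extenv} working. Once this is settled, every other ingredient of the \tms\ proof transfers unchanged, which is precisely why the detailed repetition is omitted.
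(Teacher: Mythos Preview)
Your proposal is correct and is precisely the adaptation the paper has in mind: the paper explicitly omits the proof, stating only that it ``can be obtained by slight modifications of the one presented for \tms''. You have spelled out those modifications---enlarging the notion of closed branch to account for the $\times$-rules, adjusting clauses~3 and~4 of Definition~\ref{hintikka} to (3$'$)--(4$'$), and replacing $\tilde{\Box}_1$ by $\tilde{\Box}_2$ (resp.\ $\tilde{\Box}_3$) in the $\varphi=\Box\psi$ case of Lemma~\ref{extenv}---and checked that each step carries over; this is exactly the intended route.
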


\section*{Final Remarks} \label{finalsect}

In this paper we introduce analytic tableaux for several (propositional and quantified) non-normal modal logics with non-deterministic semantics. First, we introduce tableau systems for the non-deterministic propositional modal systems \tm, \sqm, and \scm . The reader can easily check that all rules are decidable, since the tableaux trees never go into an infinite loop. We believe that such result can be easily extended to six-valued Ivlev-like systems such as the deontic {\bf Dm} (see \cite{con:cer:per:15} and \cite{con:cer:per:17}), or even to eight-valued systems such as {\bf Km} (see \cite{con:cer:per:19}).

Comparing the tableau systems of the Kripkean modal logics with the ones for the respective Ivlev-like systems, the latter seem to have a non-negligible advantage. While the former require the use of rules between trees (see \cite[Chap.~2]{fit:med:98}), the tableaux for \tm, \sqm, and \scm\ use rules only for the branches. This shows that the algorithmic complexity of this proof  method grows only as a function of the size of the formulas, as occurs in classical logic or finite many-valued logics.
This kind of result is crucial when thinking about computational applications for these logics.


The tableau systems for propositional logics are extended to the quantified versions of the systems mentioned above, namely \tms, \sqms\ and \scms. We know that {\bf CL$^*$} is undecidable. This is easy to check: considering the tableaux rules for classical predicate logic, we verify that, for instance, the formula $\forall x \exists y Rxy \to \exists x \forall y Rxy$ cannot be refuted. Indeed, when we try to finish a tableau tree to refute this formula, we can see that the rules lead us to an infinite loop. Since the three quantified modal systems presented here are extensions of classical logic, they are undecidable as well.

Although {\bf CL$^*$} is not decidable, the monadic fragment of it is decidable (see, for example, \cite[Chap.~21]{boo:etal:2002}). This result also holds for the ${\bf G}_n$ hierarchy of Gödel $n$-valued logics, as proved in \cite{Baaz:etal:07}, and it seems to hold for any monadic fragment of multivalued logic.

It is natural to ask whether the same result would hold for the monadic fragment of some first-order modal systems, in particular the ones presented here. Kripke proved in \cite{kri:62} a pretty strong result: any monadic fragment of an {\bf S5*} subsystem is undecidable. But \tms, \sqms\ and \scms\ are not subsystem of {\bf S5*}, since \axNBF\ does not hold in {\bf S5*}. Whether or not the monadic fragments of the three modal systems studied here are decidable is still an open question, although we have strong reasons to believe that they are not.

It seems that there is a very big difference between classical logic and finite-valued logics, on the one hand, and relational semantics and non-deterministic semantics, on the other. Indeed, since classical and finite-valued logics are extensional, they are unable to semantically express intensional operators. This could suggests a rather strong result: any monadic fragment of intensional semantics would be undecidable. In any case, concerning modal logic, these results lead us to agree with Kripke, who said that ``in the domain of modal logic, decidable monadic systems simply do not arise''. This important question deserves further analysis.

Concerning the full (normal) version of the propositional modal systems discussed here, it should be noticed that, recently, Gr\"atz has modified the four-valued Nmatrix semantics with level valuations for {\bf S4} introduced by Kearns in~\cite{kear:81}, obtaining so a three-valued Nmatrix semantics with level valuations in which the criteria for choosing the level valuations is effective (see~\cite{gratz:21b}). This constitutes a novel decision procedure for modal systems {\bf T} and {\bf S4}, overcoming a criticism to the original method of Kearns we made in~\cite[Section~4]{con:cer:per:17}, precisely concerning its status as a decision procedure. In turn, in~\cite{paw:larosa:21} it was proposed a new four-valued Nmatrix for a weaker version of Ivlev's {\bf Tm} called ${\bf T^-}$, which is axiomatized just by removing the Necessitation rule (NEC) from the standard axiomatization of  {\bf T}. They consider 16 axiomatic extensions of  ${\bf T^-}$ as well as the corresponding modification in the basic four-valued Nmatrix, recovering so (by adding (NEC), at the axiomatic level, and by considering level valuations, on the semantical side) the systems {\bf T}, {\bf TB}, {\bf S4}, and {\bf S5}, together with two other new systems.


To summarize, we believe that the many results for Ivlev-like modal systems presented in the literature involving Nmatrix semantics, as well as the recent results on Kearns' Nmatrix semantics with level valuations for normal modal systems above mentioned, open up concrete and exciting new perspectives for the study of modal logics in general.

\bibliographystyle{apalike}

\begin{thebibliography}{}

\bibitem[Avron and Lev, 2001]{avr:lev:01}
Avron, A. and Lev, I. (2001).
\newblock Canonical propositional {G}entzen-type systems.
\newblock In {\em Proceedings of the First International Joint Conference on
  Automated Reasoning (IJCAR '01)}, pages 529--544, London. Springer-Verlag.

\bibitem[Avron and Lev, 2005]{avr:lev:05}
Avron, A. and Lev, I. (2005).
\newblock Non-deterministic multi-valued structures.
\newblock {\em Journal of Logic and Computation}, 15(3):241--261.

\bibitem[Avron and Zamansky, 2011]{avr:zam:11}
Avron, A. and Zamansky, A. (2011).
\newblock Non-deterministic semantics for logical systems.
\newblock In Gabbay, D.~M. and Guenthner, F., editors, {\em Handbook of
  Philosophical Logic (2nd. edition)}, volume~16, pages 227--304. Springer.

\bibitem[Baaz et~al., 2007]{Baaz:etal:07}
Baaz, M., Ciabattoni, A., and Ferm\"uller, C. (2007).
\newblock Monadic fragments of {G}\"odel logics: Decidability and
  undecidability results.
\newblock In Dershowitz, N. and Voronkov, A., editors, {\em Logic for
  Programming, Artificial Intelligence, and Reasoning LPAR 2007}, volume 4790
  of {\em Lecture Notes in Artificial Intelligence}, pages 77--91. Springer
  Berlin Heidelberg.

\bibitem[Blackburn et~al., 2001]{bla:rij:ven:01}
Blackburn, P., Rijke, M., and Venema, Y. (2001).
\newblock {\em Modal Logic}.
\newblock Cambridge University Press, Cambridge.

\bibitem[Boolos et~al., 2002]{boo:etal:2002}
Boolos, G.~S., Burgess, J.~P., and Jeffrey, R.~C. (2002).
\newblock {\em Computability and logic}.
\newblock Cambridge University Press.

\bibitem[Carnielli, 1987]{car:87}
Carnielli, W.~A. (1987).
\newblock Systematization of finite many-valued logics through the method of
  tableaux.
\newblock {\em The Journal of Symbolic Logic}, 52(2):473--493.

\bibitem[Carnielli and Pizzi, 2008]{car:piz:08}
Carnielli, W.~A. and Pizzi, C. (2008).
\newblock {\em Modalities and multimodalities}, volume~12 of {\em Logic,
  epistemology, and the unity of science}.
\newblock Springer, Dordrecht.

\bibitem[Coniglio et~al., 2015]{con:cer:per:15}
Coniglio, M.~E., Fari\~nas~del Cerro, L., and Peron, N.~M. (2015).
\newblock Finite non-deterministic semantics for some modal systems.
\newblock {\em Journal of Applied Non-Classical Logics}, 25(1):20--45.

\bibitem[Coniglio et~al., 2017]{con:cer:per:17}
Coniglio, M.~E., Fari\~nas~del Cerro, L., and Peron, N.~M. (2017).
\newblock Errata and addenda to `{F}inite non-deterministic semantics for some
  modal systems'.
\newblock {\em Journal of Applied Non-Classical Logics}, 26(1):1--10.

\bibitem[Coniglio et~al., 2020]{con:cer:per:19}
Coniglio, M.~E., Fari\~nas Del~Cerro, L., and Peron, N.~M. (2020).
\newblock Modal logic with non-deterministic semantics: Part {I}
  --{P}ropositional case.
\newblock {\em Logic Journal of the IGPL}, 28(3):281--315.

\bibitem[Coniglio et~al., 2021]{con:cer:per:21}
Coniglio, M.~E., Fari\~nas Del~Cerro, L., and Peron, N.~M. (2021).
\newblock Modal logic with non-deterministic semantics: Part {II}
  --{Q}uantified case.
\newblock {\em Logic Journal of the IGPL}.
\newblock Published online: 11 June 2021.
  https://doi.org/10.1093/jigpal/jzab020.

\bibitem[Coniglio and Golzio, 2019]{con:gol:19}
Coniglio, M.~E. and Golzio, A.~C. (2019).
\newblock Swap structures semantics for Ivlev-like modal logics. 
\newblock {\em Soft Computing}, 23(7):2243--2254. 

\bibitem[Coniglio and Toledo, 2021]{con:tol:21}
Coniglio, M.~E. and Toledo, G.~V. (2021).
\newblock Two decision procedures for da {C}osta's ${C}_n$ logics by
  {R}estricted {N}matrix semantics.
\newblock {\em Studia Logica}.
\newblock First published online: November 12, 2021. 
  \mbox{https://doi.org/10.1007/s11225-021-09972-z.}

\bibitem[Fitting and Mendelsohn, 1998]{fit:med:98}
Fitting, M. and Mendelsohn, R.~L. (1998).
\newblock {\em First-Order Modal Logic}, volume~277 of {\em Synthese Library}.
\newblock Kluwer Academic Publishers.

\bibitem[Garson, 2006]{gar:06}
Garson, J.~W. (2006).
\newblock {\em Modal Logic for Philosophers}.
\newblock Cambridge University Press.

\bibitem[Gr\"atz, 2021a]{Gratz:21}
Gr\"atz, L. (2021a).
\newblock Analytic tableaux for non-deterministic semantics.
\newblock In Das, A. and Negri, S., editors, {\em Automated Reasoning with
  Analytic Tableaux and Related Methods}, volume 12842 of {\em Lecture Notes in
  Artificial Intelligence}, pages 38--55. Springer International Publishing.

\bibitem[Gr\"atz, 2021b]{gratz:21b}
Gr\"atz, L. (2021b).
\newblock Truth tables for modal logics {T} and {S}4, by using three-valued
  non-deterministic level semantics.
\newblock {\em Journal of Logic and Computation}.
\newblock First published online: December 13, 2021. DOI:
  https://doi.org/10.1093/logcom/exab068.

\bibitem[Hughes and Cresswell, 1996]{hug:cre:96}
Hughes, G.~E. and Cresswell, M.~J. (1996).
\newblock {\em A New Introduction to Modal Logic}.
\newblock Routledge, London and New York.

\bibitem[Ivlev, 1973]{ivl:73}
Ivlev, Ju.~V. (1973).
\newblock Tablitznoe postrojenie propozicionalnoj modalnoj logiki
  ({T}ruth-tables for systems of propositional modal logic, in {R}ussian).
\newblock {\em Vest. Mosk. Univ., Seria Filosofia}, 6.

\bibitem[Ivlev, 1985]{ivl:85}
Ivlev, Ju.~V. (1985).
\newblock {\em Sodierzatelnaja semantika modalnoj logiki ({C}ontentive semantic
  of modal logic, in {R}ussian)}.
\newblock Moscow.

\bibitem[Ivlev, 1988]{ivl:88}
Ivlev, Ju.~V. (1988).
\newblock A semantics for modal calculi.
\newblock {\em Bulletin of the Section of Logic}, 17(3/4):114--121.

\bibitem[Ivlev, 2013]{ivl:13}
Ivlev, Ju.~V. (2013).
\newblock Generalization of {K}almar's method for quasi-matrix logic.
\newblock {\em Logical Investigations}, 19:281--307.

\bibitem[Kearns, 1981]{kear:81}
Kearns, J. (1981).
\newblock Modal semantics without possible worlds.
\newblock {\em The Journal of Symbolic Logic}, 46(1):77--86.

\bibitem[Kripke, 1962]{kri:62}
Kripke, S.~A. (1962).
\newblock The undecidability of monadic modal quantification theory.
\newblock {\em Zeitschrift f\"ur mathemathische Logik und Grundlagen der
  Mathematik}, 8:113--116.

\bibitem[Mendelson, 2015]{men:10}
Mendelson, E. (2015).
\newblock {\em Introduction to Mathematical Logic}.
\newblock Discrete Mathematics and Its Applications. Chapman and Hall/CRC, 6
  edition.

\bibitem[Omori and Skurt, 2016]{omo:sku:16}
Omori, H. and Skurt, D. (2016).
\newblock More modal semantics without possible worlds.
\newblock {\em IfCoLog Journal of Logics and their Applications},
  3(5):815--846.

\bibitem[Omori and Skurt, 2020]{OS:20}
Omori, H. and Skurt, D. (2020).
\newblock A semantics for a failed axiomatization of ${K}$.
\newblock In Olivietti, N., Verbrugge, R., Negri, S., and Sandu, G., editors,
  {\em Advances in Modal Logic}, volume~13, pages 481--501. College
  Publications.

\bibitem[Pawlowski, 2020]{Pawlowski}
Pawlowski, P. (2020).
\newblock Tree-like proof systems for finitely-many valued non-deterministic
  consequence relations.
\newblock {\em Logica Universalis}, 14(4):407--420.

\bibitem[Pawlowski and La~Rosa, 2021]{paw:larosa:21}
Pawlowski, P. and La~Rosa, E. (2021).
\newblock Modular non-deterministic semantics for {T}, {TB}, {S}4, {S}5 and
  more.
\newblock {\em Journal of Logic and Computation}.
\newblock First published online: December 22, 2021. DOI:
  https://doi.org/10.1093/logcom/exab079.


\bibitem[Smullyan, 1995]{smul:1968}
Smullyan, R.~M. (1995).
\newblock {\em First-Order Logic}.
\newblock Dover Publications, Mineola, N.Y. USA.
\newblock Corrected republication of the Springer-Verlag, New York, 1968
  edition.



\bibitem[W\'{o}jcicki, 1984]{woj:84}
W\'{o}jcicki, R. (1984).
\newblock {\em Lectures on propositional calculi}.
\newblock Ossolineum, Wroclaw, Poland.



\end{thebibliography}

\end{document}